\newtheorem{theorem}{Theorem}[section]
\newtheorem{lemma}[theorem]{Lemma}
\newtheorem{proposition}[theorem]{Proposition}
\newtheorem{corollary}[theorem]{Corollary}
\newtheorem{remark}[theorem]{Remark}
\theoremstyle{definition}
\numberwithin{equation}{section}
\newcommand{\ba}{\begin{array}}
\newcommand{\ea}{\end{array}}
\newcommand{\ep}{\varepsilon}
\newcommand{\R}{\mathbb{R}}
\newcommand{\N}{\mathbb{N}}
\begin{document}
\date{}
 \title{Singularly perturbed Choquard equations with nonlinearity satisfying Berestycki-Lions assumptions}
%% Group authors per affiliation:
\author{Xianhua Tang\footnote{Corresponding author.
E-mail address:  {\tt tangxh@mail.csu.edu.cn} (X. H. Tang), {\tt  mathsitongchen@163.com} (S. T. Chen).},
\ \ Sitong Chen \\
        {\small \it School of Mathematics and Statistics, Central South University,}\\
        {\small \it Changsha, Hunan 410083, P.R.China}}
\maketitle
\begin{abstract}
 In the present paper, we consider the following singularly perturbed problem:
 \begin{equation*}
 \left\{
   \begin{array}{ll}
     -\varepsilon^2\triangle u+V(x)u=\varepsilon^{-\alpha}(I_{\alpha}*F(u))f(u), & x\in \R^N; \\
     u\in H^1(\R^N),
   \end{array}
 \right.
 \end{equation*}
 where $\varepsilon>0$ is a parameter, $N\ge 3$, $\alpha\in (0, N)$, $F(t)=\int_{0}^{t}f(s)\mathrm{d}s$ and $I_{\alpha}: \R^N\rightarrow \R$ is the Riesz potential. By introducing some new tricks, we prove that the above
 problem admits a semiclassical ground state solution ($\ep\in (0,\varepsilon_0)$) and a ground state solution
 ($\varepsilon=1$) under the general ``Berestycki-Lions assumptions" on the nonlinearity $f$ which are almost necessary, as well as  some weak assumptions on the potential $V$. When $\varepsilon=1$, our results generalize and improve the ones in [V. Moroz, J. Van Schaftingen, T. Am. Math. Soc. 367 (2015) 6557-6579] and [H. Berestycki, P.L. Lions,  Arch. Rational Mech. Anal. 82 (1983) 313-345] and some other related literature. In particular, our approach is useful for many similar problems.

 \noindent
{\bf Keywords: }\ \ Choquard equation; Ground state solution; Poho\u zaev mainfold; Berestycki-Lions assumptions;
 Singularly perturbed.

 \noindent
 {\bf 2010 Mathematics Subject Classification.}\ \ 35J20, 35J62, 35Q55
\end{abstract}

%\journal{Journal of Differential Equations}

%%%%%%%%%%%%%%%%%%%%%%%
%% Elsevier bibliography styles
%%%%%%%%%%%%%%%%%%%%%%%
%% To change the style, put a % in front of the second line of the current style and
%% remove the % from the second line of the style you would like to use.
%%%%%%%%%%%%%%%%%%%%%%%

%% Numbered
%\bibliographystyle{model1-num-names}

%% Numbered without titles
%\bibliographystyle{model1a-num-names}

%% Harvard
%\bibliographystyle{model2-names.bst}\biboptions{authoryear}

%% Vancouver numbered
%\usepackage{numcompress}\bibliographystyle{model3-num-names}

%% Vancouver name/year
%\usepackage{numcompress}\bibliographystyle{model4-names}\biboptions{authoryear}

%% APA style
%\bibliographystyle{model5-names}\biboptions{authoryear}

%% AMA style
%\usepackage{numcompress}\bibliographystyle{model6-num-names}

%% `Elsevier LaTeX' style
%\bibliographystyle{elsarticle-num}
%%%%%%%%%%%%%%%%%%%%%%%

{\section{Introduction}}
 \setcounter{equation}{0}

  In this paper, we consider the following singularly perturbed nonlinear Choquard equation:
 \begin{equation}\label{KE9}
 \left\{
   \begin{array}{ll}
     -\varepsilon^2\triangle u+V(x)u=\varepsilon^{-\alpha}(I_{\alpha}*F(u))f(u), & x\in \R^N; \\
     u\in H^1(\R^N),
   \end{array}
 \right.
 \end{equation}
 where $\varepsilon>0$ is a parameter, $N\ge 3$, $\alpha\in (0, N)$ and $I_{\alpha}: \R^N\rightarrow \R$ is the Riesz potential defined by
 $$
   I_{\alpha}(x)=\frac{\Gamma\left(\frac{N-\alpha}{2}\right)}{\Gamma\left(\frac{\alpha}{2}\right)
       2^{\alpha}\pi^{N/2}|x|^{N-\alpha}}, \ \ \ \  x \in \R^N\setminus \{0\},
 $$
 $F(t)=\int_{0}^{t}f(s)\mathrm{d}s$, $V: \R^N\rightarrow \R$ and $f: \R \rightarrow \R$ satisfy the following basic assumptions:

 \vskip2mm
 \begin{itemize}
 \item[(V1)] $V\in \mathcal{C}(\R^N, [0, \infty))$ and $V_{\infty}:=\lim_{|x|\to \infty}V(x)>0$;

 \item[(F1)]  $f\in \mathcal{C}(\R, \R)$ and there exists a constant $\mathcal{C}_0>0$ such that
 $$
   |f(t)t|\le \mathcal{C}_0\left(|t|^{(N+\alpha)/N}+|t|^{(N+\alpha)/(N-2)}\right), \ \ \ \ \forall \ t\in \R;
 $$

 \item[(F2)]  $F(t)=o\left(t^{(N+\alpha)/N}\right)$ as $t\to 0$ and $F(t)=o\left(t^{(N+\alpha)/(N-2)}\right)$ as $|t|\to \infty$;

 \item[(F3)] there exists $s_0>0$ such that $F(s_0)\ne 0$.
 \end{itemize}
 Note that (F1)-(F3) were almost necessary and sufficient conditions and regarded as the Berestycki-Lions
 type conditions to Choquard equations, which were introduced by Moroz and Van Schaftingen in \cite{MS}
 for the study of \eqref{KE9} with $\ep=1$.

 \par
 In recent years, semiclassical problems like \eqref{KE9}, i.e. the parameter $\varepsilon$ goes to zero,
 have received attention from the mathematical community. For small $\varepsilon>0$, bound states are called
 semiclassical states, which describe a kind of transition from Quantum Mechanics to Newtonian
 Mechanics. There are some nice work on semiclassical states for \eqref{KE9}. For example,
 for a special form of \eqref{KE9} with $N = 3, \alpha= 2$ and $F(s) = s^2/2$, by proving the uniqueness and
 non-degeneracy, of the ground states for the limit problem, Wei and Winter \cite{WW} constructed a family of solutions by a Lyapunov-Schmidt type reduction; Cingolani et al. \cite{CS} proved the existence of solutions concentrating around several minimum points of $V$ by a global penalization method. Moroz and Van Schaftingen \cite{MS6} developed a nonlocal penalization technique to show that problem \eqref{KE9} with $F(s) = |s|^p/p$ and $p\ge 2$ has a family of solutions concentrating at the local minimum of $V$ provided $V$ satisfies some additional assumptions at infinity. However, for \eqref{KE9} with general nonlinearity $F$ which only satisfies (F1)-(F3), there seem to be no results in the existing literature. One of main purpose of this paper is to deal with this case.

 \par
   When $\varepsilon=1$, \eqref{KE9} reduces to the nonlinear Choquard equation of the form:
 \begin{equation}\label{SE}
 \left\{
   \begin{array}{ll}
     -\triangle u+V(x)u=(I_{\alpha}*F(u))f(u), & x\in \R^N; \\
     u\in H^1(\R^N).
   \end{array}
 \right.
 \end{equation}
 which has been extensively studied by using variational methods, see
 \cite{AG,ANY,AY,CZT,MRZ1, MS1,MS2,MS,MS4,MS6,MS5,RS,TLY} and references therein.
 In view of (F1), (F2) and Hardy-Littlewood-Sobolev inequality,
 for some $p\in (2, 2^*)$ and any $\epsilon>0$, one has
 \begin{eqnarray}\label{Ru}
    &     & \int_{\R^N}(I_{\alpha}*F(u))F(u)\mathrm{d}x\nonumber\\
    &  =  & \frac{\Gamma\left(\frac{N-\alpha}{2}\right)}{\Gamma\left(\frac{\alpha}{2}\right)2^{\alpha}\pi^{N/2}}
              \int_{\R^N}\int_{\R^N}\frac{F(u(x))F(u(y))}{|x-y|^{N-\alpha}}\mathrm{d}x\mathrm{d}y
               \le \mathcal{C}_1\|F(u)\|_{2N/(N+\alpha)}^2\nonumber\\
    & \le &  \epsilon \left(\|u\|_2^{2(N+\alpha)/N}+\|u\|_{2^*}^{2(N+\alpha)/(N-2)}\right)
              +C_{\epsilon}\|u\|_{p}^{(N+\alpha)p/N},  \ \ \ \ \forall \ u\in H^1(\R^N).
 \end{eqnarray}
 It is standard to check using \eqref{Ru} that under (V1), (F1) and (F2), the energy functional
 defined in $H^1(\R^N)$ by
 \begin{equation}\label{IU}
   \mathcal{I}(u) = \frac{1}{2}\int_{\R^N}\left[|\nabla u|^2+V(x)u^2\right]\mathrm{d}x
           -\frac{1}{2}\int_{\R^N}(I_{\alpha}*F(u))F(u)\mathrm{d}x
 \end{equation}
 is continuously differentiable and its critical points correspond to the weak solutions of \eqref{SE}.

 \vskip2mm
 \par
   If the potential $V (x)\equiv V_{\infty}$, then \eqref{SE} reduces to the following autonomous form:
 \begin{equation}\label{SE1}
 \left\{
   \begin{array}{ll}
     -\triangle u+V_{\infty}u=(I_{\alpha}*F(u))f(u), & x\in \R^N; \\
     u\in H^1(\R^N),
   \end{array}
 \right.
 \end{equation}
 its energy functional is as follows:
 \begin{equation}\label{Ii}
   \mathcal{I}^{\infty}(u)=\frac{1}{2}\int_{{\R}^N}\left(|\nabla u|^2+V_{\infty}u^2\right)\mathrm{d}x
            -\frac{1}{2}\int_{\R^N}(I_{\alpha}*F(u))F(u)\mathrm{d}x.
 \end{equation}
 Problem \eqref{SE1} is a semilinear elliptic equation with a nonlocal nonlinearity. For $N = 3, \alpha = 2$, $V_{\infty}=1$ and $F(t) = t^2/2$, it covers in particular the Choquard-Pekar equation
 \begin{equation}\label{SE2}
 \left\{
   \begin{array}{ll}
     -\triangle u+u=(I_2*u^2)u, & x\in \R^3; \\
     u\in H^1(\R^3),
   \end{array}
 \right.
 \end{equation}
 introduced by Pekar \cite{Pe} at least in 1954, describing the quantum mechanics of a polaron at rest.
 In 1976, Choquard \cite{Lie} used \eqref{SE2} to describe an electron trapped in its own hole.
 In 1996, Penrose \cite{MP} proposed \eqref{SE2} as a model of self-gravitating matter.
 In this context \eqref{SE2} is usually called the nonlinear Schr\"odinger-Newton equation,
 see Moroz-Schaftingen \cite{MS}.  %For more investigations on nonlinear Choquard equations, one can see\cite{AG,ANY,AY,Bh, MS1,MS2,MS4,MS6,MS5,RS} and references therein.

 \par
  If we let $\alpha \rightarrow 0$ in \eqref{SE1}, then we can get the following limiting problem:
 \begin{equation}\label{SE3}
 \left\{
   \begin{array}{ll}
     -\triangle u+V_{\infty}u=g(u), & x\in \R^N; \\
     u\in H^1(\R^N),
   \end{array}
 \right.
 \end{equation}
 where $g=Ff$.
 In the fundamental paper \cite{BL}, Berestycki-Lions proved that \eqref{SE3} has a radially symmetric positive solution provided
 that $g$ satisfies the following assumptions:

 \begin{itemize}
 \item[(G1)]  $g\in \mathcal{C}(\R, \R)$ is odd and there exists a constant $\mathcal{C}_0>0$ such that
 $$
   |g(t)|\le \mathcal{C}_0\left(1+|t|^{(N+2)/(N-2)}\right), \ \ \ \ \forall \ t\in \R;
 $$

 \item[(G2)]  $g(t)=o(t)$ as $t\to 0$ and $g(t)=o\left(t^{(N+2)/(N-2)}\right)$ as $t\to +\infty$;

 \item[(G3)] there exists $s_0>0$ such that $G(s_0)>\frac{1}{2}V_{\infty}s_0^2$, where $G(t)=\int_{0}^{t}g(s)\mathrm{d}s$.
 \end{itemize}

 \par
   To prove the above result, Berestycki-Lions \cite{BL} considered the following constrained
 minimization problem
 \begin{equation}\label{min}
   \min\left\{\|\nabla u\|_2^2 : u\in\mathcal{S} \right\},
 \end{equation}
 where
 \begin{equation}\label{mS}
   \mathcal{S} = \left\{u\in H^1(\R^N) : \int_{\R^N}\left[G(u)-\frac{1}{2}V_{\infty}u^2\right]\mathrm{d}x=1\right\};
 \end{equation}
 they first showed that by the P\'olya-Szeg\"o inequality for the Schwarz symmetrization, the minimum can be taken on radial
 and radially nonincreasing functions. Then they showed the existence of a minimum $\hat{w} \in H^1(\R^N)$ by the direct method of the
 calculus of variations. With the Lagrange multiplier Theorem, they concluded that $\bar{u}(x):=\hat{w}(x/t_{\hat{w}})$ with
 $t_{\hat{w}}=\sqrt{\frac{N-2}{2N}}\|\nabla \hat{w}\|_2$ is a least energy solution of \eqref{SE3}. By noting the one-to-one
 correspondence between $\mathcal{S}$ and
 $$
   \mathcal{P}_G:=\left\{u\in H^1(\R^N)\setminus \{0\} : \frac{N-2}{2}\|\nabla u\|_2^2+\frac{NV_{\infty}}{2}\|u\|_2^2
     -N\int_{\R^N}G(u)\mathrm{d}x=0\right\},
 $$
 Jeanjean-Tanaka \cite{JT} proved that $\bar{u}$ minimizes the value of the energy functional on the Poho\u zaev manifold for \eqref{SE3}.

 \par
  However, the approach of Berestycki-Lions \cite{BL} fails for nonlocal problem \eqref{SE1}
 due to the appearance of the nonlocal term. In \cite{MS}, Moroz-Van Schaftingen proved firstly the existence of a least energy solution to \eqref{SE1} under (F1)-(F3). To do that, they employed a scaling technique introduced by Jeanjean \cite{Je0} to construct a Palais-Smale sequence ((PS)-sequence in short) that satisfies asymptotically the Poho\u zaev identity (a Poho\u zaev-Palais-Smale sequence in short), where the information related to the Poho\u zaev identity helps to ensure the boundedness of (PS)-sequences, and then used a concentration compactness argument to overcome the difficulty caused by lack of Sobolev embeddings. Such an approach could be useful for the study of other problems where radial symmetry of solutions either fails or is not readily available. For more related results on nonlocal problems, we refer to \cite{CT1,MRZ1,MRZ2,MR,SS,XC}.

 \par
   We would like to point out that the approach used in \cite{MS} is only valid for autonomous equations,
 it does not work any more for nonautonomous equation \eqref{SE} with $V\ne$ constant, since one could not construct a Poho\u zaev-Palais-Smale sequence as Moroz-Van Schaftingen did in \cite{MS}. Thus new techniques are required for the study of the nonautonomous equation \eqref{SE} with $f$ satisfying (F1)-(F3), which is another focus of this paper.

 \par
    In view of \cite[Theorem 3]{MS}, every solution $u$ of \eqref{SE1} satisfies the following Poho\u zaev type identity:
 \begin{equation}\label{Pi}
  \mathcal{P}^{\infty}(u):=\frac{N-2}{2}\|\nabla u\|_2^2+\frac{NV_{\infty}}{2}\|u\|_2^2
     -\frac{N+\alpha}{2}\int_{\R^N}(I_{\alpha}*F(u))F(u)\mathrm{d}x=0.
 \end{equation}
 Therefore, the following set
 $$
   \mathcal{M}^{\infty}:=\left\{u\in H^1(\R^N)\setminus \{0\}: \mathcal{P}^{\infty}(u)=0\right\}
 $$
 is a natural constraint for the functional $\mathcal{I}^{\infty}$. Moreover, the least energy solution $u_0$ obtained in \cite{MS}
 satisfies $\mathcal{I}^{\infty}(u_0)\ge \inf_{\mathcal{M}^{\infty}}\mathcal{I}^{\infty}$. A natural question is whether there exists
 a solution $\bar{u}\in \mathcal{M}^{\infty}$ such that
 \begin{equation}\label{Gs}
   \mathcal{I}^{\infty}(\bar{u})=\inf_{\mathcal{M}^{\infty}}\mathcal{I}^{\infty}.
 \end{equation}

 \par
  In the first part of this paper, motivated by \cite{BL,TC3,JT,MS,TL,TC2}, we shall develop a
 more direct approach to obtain a ground state solution for \eqref{SE} which has minimal ``energy"
 $\mathcal{I}$ in the set of all nontrivial solutions, moreover, this solution also minimizes the value of $\mathcal{I}$ on the Poho\u zaev manifold associated with \eqref{SE}, under (F1)-(F3), (V1) and the following
 two additional conditions on $V$:
 \begin{itemize}
 \item[(V2)]  $V(x)\le V_{\infty}$ for all $x\in \R^N$;
 \item[(V3)] $V\in \mathcal{C}^1(\R^N, \R)$ and there exists $\theta\in [0, 1)$ such that
 $t\mapsto \frac{NV(tx)+\nabla V(tx)\cdot (tx)}{t^{\alpha}}+\frac{(N-2)^3\theta }{4t^{\alpha+2}|x|^2}$ is nonincreasing on
 $(0,\infty)$ for all $x\in \R^N\setminus \{0\}$.
 \end{itemize}

 \par
   To state our first result, we define a functional on $H^1(\R^N)$ as follows:
 \begin{eqnarray}\label{Jv}
   \mathcal{P}(u) &  :=  & \frac{N-2}{2}\|\nabla u\|_2^2+\frac{1}{2}\int_{{\R}^N}[NV(x)+\nabla V(x)\cdot x]u^2\mathrm{d}x\nonumber\\
                  &      & \ \ -\frac{N+\alpha}{2}\int_{\R^N}(I_{\alpha}*F(u))F(u)\mathrm{d}x,
 \end{eqnarray}
 which is associated with the Poho\u zaev identity $\mathcal{P}(u)=0$ of \eqref{SE}, see Lemma \ref{lem 4.2}. Let
 \begin{equation}\label{Ne}
   \mathcal{M}:= \left\{u\in H^1(\R^N)\setminus \{0\} : \mathcal{P}(u)=0\right\}.
 \end{equation}

 \par
  Our first result is as follows.

 \begin{theorem}\label{thm1.1}
  Assume that $V$ and $f$ satisfy {\rm (V1)-(V3)} and {\rm (F1)-(F3)}. Then
 problem \eqref{SE} has a solution $\bar{u}\in H^1(\R^N)$ such that $\mathcal{I}(\bar{u})=\inf_{\mathcal{M}}\mathcal{I}
 =\inf_{u\in \Lambda}\max_{t > 0}\mathcal{I}(u_t)>0$, where
 $$
   u_t(x):=u(x/t) \ \ \mbox{and} \ \
  \Lambda :=\left\{u\in H^1(\R^N) : \int_{\R^N}(I_{\alpha}*F(u))F(u)\mathrm{d}x > 0\right\}.
 $$
 \end{theorem}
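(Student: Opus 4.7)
My approach is to realize the ground state as a minimizer on the Pohozaev manifold $\mathcal{M}$, following the Jeanjean-Tanaka strategy adapted to the nonlocal Choquard setting. The Pohozaev identity (Lemma \ref{lem 4.2}) justifies using $\mathcal{M}$ as a natural constraint. For $u\in\Lambda$ the fibering $u\mapsto u_t$ gives
$$\mathcal{I}(u_t)=\frac{t^{N-2}}{2}\|\nabla u\|_2^2+\frac{t^N}{2}\int_{\R^N}V(tx)u^2\,\mathrm{d}x-\frac{t^{N+\alpha}}{2}\int_{\R^N}(I_\alpha*F(u))F(u)\,\mathrm{d}x,$$
with $t\frac{\mathrm{d}}{\mathrm{d}t}\mathcal{I}(u_t)=\mathcal{P}(u_t)$. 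Using (V1) and the monotonicity in (V3) I would show that $t\mapsto\mathcal{I}(u_t)$ has a unique critical point $t_u>0$, a strict global maximum with $u_{t_u}\in\mathcal{M}$. This immediately yields $\inf_{u\in\Lambda}\max_{t>0}\mathcal{I}(u_t)=\inf_{\mathcal{M}}\mathcal{I}=:m$, and conditions (F1)-(F2) combined with the Hardy-Littlewood-Sobolev bound \eqref{Ru} force $\|\nabla u\|_2^2+\int V(x)u^2\,\mathrm{d}x\ge c_0>0$ on $\mathcal{M}$, hence $m>0$.

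The technical heart of the argument is a coercive inequality obtained by integrating the monotonicity (V3) along the scaling: for every $u\in H^1(\R^N)$ and $t>0$,
$$\mathcal{I}(u)\ge\mathcal{I}(u_t)+\frac{1-t^{N+\alpha}}{N+\alpha}\mathcal{P}(u)+\Phi_\theta(t)\|\nabla u\|_2^2,$$
where $\Phi_\theta(t)\ge 0$ encodes the $\theta$-dependent correction in (V3) and vanishes only at $t=1$; the strict bound $\theta<1$ is precisely what makes $\Phi_\theta$ nondegenerate. Specializing to $u\in\mathcal{M}$ gives $\mathcal{I}(u)\ge\mathcal{I}(u_t)$ for all $t>0$, so every point of $\mathcal{M}$ realizes its fibering maximum, and applied to a minimizing sequence the same inequality delivers $H^1$-boundedness.

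Take now $\{u_n\}\subset\mathcal{M}$ with $\mathcal{I}(u_n)\to m$; it is bounded, so up to a subsequence $u_n\rightharpoonup\bar u$ in $H^1(\R^N)$. A Brezis-Lieb-type splitting for the Riesz convolution (as in \cite{MS}) combined with weak lower semicontinuity gives $\mathcal{P}(\bar u)\le 0$. If $\mathcal{P}(\bar u)<0$, one scales $\bar u$ to the unique $\tau\in(0,1)$ with $\bar u_\tau\in\mathcal{M}$ and derives $\mathcal{I}(\bar u_\tau)<\mathcal{I}(\bar u)\le\liminf\mathcal{I}(u_n)=m$ via the key inequality, contradicting the definition of $m$. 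Hence $\mathcal{P}(\bar u)=0$, and only the exclusion of $\bar u\equiv 0$ remains.

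This last exclusion is the main obstacle, since it couples the nonlocality with the variable potential. Assuming $\bar u=0$, the lower bound on $\mathcal{M}$ prevents $\|u_n\|_q\to 0$ for some $q\in(2,2^*)$, so by Lions' lemma there exist $y_n\in\R^N$ with $|y_n|\to\infty$ and $u_n(\cdot+y_n)\rightharpoonup\tilde u\neq 0$; since $V(x)\to V_\infty$ we obtain $\mathcal{I}(u_n)-\mathcal{I}^\infty(u_n)\to 0$ and $\mathcal{P}(u_n)-\mathcal{P}^\infty(u_n)\to 0$, so $\tilde u\in\mathcal{M}^\infty$ and $\mathcal{I}^\infty(\tilde u)\le m$, whence $m^\infty:=\inf_{\mathcal{M}^\infty}\mathcal{I}^\infty\le m$. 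On the other hand, applying the fibering to a ground state of $\mathcal{I}^\infty$ (which exists by \cite{MS}) and using (V2) produces $t_\star$ with $\mathcal{I}(u_{t_\star}^\infty)\le\mathcal{I}^\infty(u^\infty)=m^\infty$, so $m\le m^\infty$; moreover (V2) makes the inequality strict whenever $V\not\equiv V_\infty$, contradicting $m^\infty\le m$. (When $V\equiv V_\infty$ the translated weak limit $\tilde u$ directly serves as $\bar u$.) Hence $\bar u\neq 0$, and combining $\mathcal{P}(\bar u)=0$ with $\mathcal{I}(\bar u)\le m$ gives $\mathcal{I}(\bar u)=m$. Finally a deformation/Lagrange-multiplier argument on $\mathcal{M}$, using the strict maximality of $t_{\bar u}=1$ established in the first step, produces a critical point of $\mathcal{I}$ at level $m$, completing the proof.
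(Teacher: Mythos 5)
Your overall architecture matches the paper's: minimize $\mathcal{I}$ on the Poho\u zaev manifold $\mathcal{M}$, establish the key scaling inequality (Lemma~\ref{lem 2.2}) to control fibers and gain boundedness, then recover compactness by Lions' lemma and translations. However, the compactness step as written has a genuine gap. Neither $\mathcal{I}$ nor $\mathcal{P}$ is weakly lower semicontinuous: the nonlocal term $\int_{\R^N}(I_\alpha*F(u))F(u)\,\mathrm{d}x$ satisfies only a Brezis--Lieb splitting whose remainder may have either sign, so the assertions ``weak lower semicontinuity gives $\mathcal{P}(\bar u)\le 0$'' and ``$\mathcal{I}(\bar u)\le\liminf\mathcal{I}(u_n)$'' do not hold. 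The device the paper relies on, and which is absent from your sketch, is the auxiliary functional $\Psi(u):=\mathcal{I}(u)-\frac{1}{N+\alpha}\mathcal{P}(u)$ (and its autonomous counterpart $\Psi_0$). Because the nonlocal contributions in $\mathcal{I}$ and $\frac{1}{N+\alpha}\mathcal{P}$ cancel exactly, $\Psi$ is a nonnegative quadratic form in $u$ and therefore genuinely weakly lower semicontinuous. The proof of Lemma~\ref{lem 3.2} then runs the chain $m=\lim\Psi(u_n)\ge\Psi(\bar u)=\mathcal{I}(\bar u)-\frac{1}{N+\alpha}\mathcal{P}(\bar u)\ge\mathcal{I}(\bar u_{\bar t})-\frac{\bar t^{N+\alpha}}{N+\alpha}\mathcal{P}(\bar u)\ge m$ and forces equality, giving $\mathcal{P}(\bar u)=0$, $\mathcal{I}(\bar u)=m$. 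Your attempt to avoid $\Psi$ and directly deduce $\mathcal{P}(\bar u)<0\Rightarrow$ contradiction is also reversed in sign: with $\bar u_\tau\in\mathcal{M}$, Corollary~\ref{cor2.4} (or \eqref{B21} applied to $\bar u_\tau$ with $t=1/\tau$) gives $\mathcal{I}(\bar u_\tau)\ge\mathcal{I}(\bar u)$, not $<$ -- scaling a point into $\mathcal{M}$ only increases $\mathcal{I}$, so the claimed contradiction never appears.

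The treatment of the vanishing case $\bar u=0$ suffers from the same issue. Weak convergence of the translates $\hat u_n=u_n(\cdot+y_n)$ does not yield ``$\tilde u\in\mathcal{M}^\infty$'' or ``$\mathcal{I}^\infty(\tilde u)\le m$''; in the paper this is replaced by the full Brezis--Lieb decomposition of both $\mathcal{I}^\infty$ and $\mathcal{P}^\infty$ along $w_n=\hat u_n-\hat u$, combined with the lower semicontinuity of $\Psi_0$ and another application of \eqref{B21}, to conclude $\mathcal{P}^\infty(\hat u)=0$, $\mathcal{I}^\infty(\hat u)=m$. Moreover the paper does not derive a contradiction from $\bar u=0$: it shows that in that case a minimizer can still be manufactured by scaling $\hat u$ into $\mathcal{M}$ with the help of $V\le V_\infty$. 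Your alternative of establishing the strict inequality $m<m^\infty$ when $V\not\equiv V_\infty$ is avoidable (the paper only proves $m\le m^\infty$, Lemma~\ref{lem 3.1}) and the inequalities you invoke to prove it are the same unjustified lower-semicontinuity steps. Finally, your last sentence ``(When $V\equiv V_\infty$ the translated weak limit $\tilde u$ directly serves as $\bar u$.) Hence $\bar u\ne 0$'' is not a proof that $\bar u\ne 0$; it acknowledges that the minimizer may live at infinity, which is precisely the case the paper handles constructively rather than by contradiction.
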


 \begin{corollary}\label{cor1.2}
   Assume that $f$ satisfies {\rm (F1)-(F3)}. Then problem \eqref{SE1} has a solution
 $\bar{u}\in H^1(\R^N)$ such that $\mathcal{I}^{\infty}(\bar{u})=\inf_{\mathcal{M}^{\infty}}\mathcal{I}^{\infty}
 =\inf_{u\in \Lambda}\max_{t > 0}\mathcal{I}^{\infty}(u_t)>0$.
 \end{corollary}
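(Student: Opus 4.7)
The plan is to deduce the corollary directly from Theorem \ref{thm1.1} by taking the potential to be the constant $V(x)\equiv V_{\infty}$, so no new variational machinery is required. The work reduces to verifying that this constant potential fits the hypothesis framework of Theorem \ref{thm1.1} and that the objects $\mathcal{I}$, $\mathcal{P}$, $\mathcal{M}$, $\Lambda$ introduced for the nonautonomous problem collapse onto $\mathcal{I}^{\infty}$, $\mathcal{P}^{\infty}$, $\mathcal{M}^{\infty}$ and the same $\Lambda$ used in the statement.

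First I would check the potential assumptions (V1)--(V3) for $V\equiv V_{\infty}$. Condition (V1) is immediate since $V_{\infty}>0$, and (V2) holds with equality. For (V3), $V\in\mathcal{C}^1(\R^N,\R)$ is obvious and $\nabla V\equiv 0$, so the map in (V3) becomes
\[
 t\mapsto \frac{NV_{\infty}}{t^{\alpha}}+\frac{(N-2)^3\theta}{4t^{\alpha+2}|x|^2}.
\]
Choosing $\theta=0$ gives $t\mapsto NV_{\infty}t^{-\alpha}$, which is strictly decreasing on $(0,\infty)$ because $\alpha\in(0,N)$. Hence (V3) is satisfied with $\theta=0$, so all hypotheses of Theorem \ref{thm1.1} are met.

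Next I would observe that substituting $V\equiv V_{\infty}$ into \eqref{IU} and \eqref{Jv} immediately yields $\mathcal{I}=\mathcal{I}^{\infty}$ and, because the term $\nabla V(x)\cdot x$ vanishes identically,
\[
 \mathcal{P}(u)=\frac{N-2}{2}\|\nabla u\|_2^2+\frac{NV_{\infty}}{2}\|u\|_2^2-\frac{N+\alpha}{2}\int_{\R^N}(I_{\alpha}*F(u))F(u)\mathrm{d}x=\mathcal{P}^{\infty}(u),
\]
so $\mathcal{M}=\mathcal{M}^{\infty}$. The set $\Lambda$ is defined purely in terms of $f$, hence unchanged. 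Applying Theorem \ref{thm1.1} then produces $\bar{u}\in H^1(\R^N)$ with $\mathcal{I}^{\infty}(\bar{u})=\inf_{\mathcal{M}^{\infty}}\mathcal{I}^{\infty}=\inf_{u\in\Lambda}\max_{t>0}\mathcal{I}^{\infty}(u_t)>0$, which is exactly the claim of Corollary \ref{cor1.2}.

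There is essentially no genuine obstacle here: the corollary is the autonomous specialization of Theorem \ref{thm1.1}, and the only verification worth writing out is that $\theta=0$ makes (V3) trivially hold for the constant potential, which hinges only on $\alpha>0$. No independent compactness or Pohožaev argument needs to be redone, since everything has been subsumed in the proof of Theorem \ref{thm1.1}.
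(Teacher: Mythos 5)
Your proposal is correct and coincides with the paper's own treatment: at the start of Section~2 the authors simply remark that $V(x)\equiv V_{\infty}$ satisfies (V1)--(V3), so that every conclusion proved for $\mathcal{I}$, $\mathcal{P}$, $\mathcal{M}$ carries over to $\mathcal{I}^{\infty}$, $\mathcal{P}^{\infty}$, $\mathcal{M}^{\infty}$, and Corollary~\ref{cor1.2} is read off as the autonomous specialization of Theorem~\ref{thm1.1}. Your explicit verification of (V3) with $\theta=0$ (using only $\alpha>0$) is the same observation made precise, and the identification $\mathcal{I}=\mathcal{I}^{\infty}$, $\mathcal{P}=\mathcal{P}^{\infty}$, $\mathcal{M}=\mathcal{M}^{\infty}$ is exactly what the paper relies on.
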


 \par
   With the help of the Poho\u zaev type identity \eqref{Pi} established in \cite{MS}, we easily prove that the solution $\bar{u}$ obtained
 in Corollary \eqref{cor1.2} is also the least energy solution for \eqref{SE1}. More precisely, we have the following theorem:

 \begin{theorem}\label{thm1.3}
   Assume that $f$ satisfies {\rm (F1)-(F3)}. Then problem \eqref{SE1} has a solution $\bar{u}\in H^1(\R^N)$ such that
 $$
   \mathcal{I}^{\infty}(\bar{u})=\inf_{\mathcal{M}^{\infty}}\mathcal{I}^{\infty} =\inf\left\{\mathcal{I}^{\infty}(u): u\in H^1(\R^N)\setminus \{0\}
      \ \mbox{is a solution of} \ \eqref{SE1}\right\}.
 $$
 \end{theorem}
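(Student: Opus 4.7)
The plan is to derive Theorem \ref{thm1.3} as an essentially immediate consequence of Corollary \ref{cor1.2} combined with the Poho\u zaev identity \eqref{Pi} established in [MS, Theorem 3]. The crucial observation is that \eqref{Pi} applies to \emph{every} nontrivial weak solution of \eqref{SE1}, so the Poho\u zaev manifold $\mathcal{M}^{\infty}$ automatically contains the entire set of nontrivial solutions. Thus the equality reduces to a sandwich between two trivial inequalities.

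The first step is to invoke Corollary \ref{cor1.2} to produce $\bar{u}\in H^1(\R^N)\setminus\{0\}$ which is a (critical point, hence a) solution of \eqref{SE1} and which satisfies $\mathcal{I}^{\infty}(\bar{u})=\inf_{\mathcal{M}^{\infty}}\mathcal{I}^{\infty}$. Since $\bar{u}$ is itself a nontrivial solution, one immediately obtains
$$
\inf\left\{\mathcal{I}^{\infty}(u): u\in H^1(\R^N)\setminus\{0\}\ \text{is a solution of \eqref{SE1}}\right\}\ \le\ \mathcal{I}^{\infty}(\bar{u}).
$$

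The second step handles the reverse inequality. Given any nontrivial solution $u$ of \eqref{SE1}, apply the Poho\u zaev identity \eqref{Pi} to conclude $\mathcal{P}^{\infty}(u)=0$, so $u\in\mathcal{M}^{\infty}$. Consequently $\mathcal{I}^{\infty}(u)\ge \inf_{\mathcal{M}^{\infty}}\mathcal{I}^{\infty}=\mathcal{I}^{\infty}(\bar{u})$, and taking the infimum over all such $u$ closes the loop. Combining the two inequalities yields the desired three-way equality.

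There is essentially no new obstacle at this stage: all the analytical difficulty (the variational setup on the Poho\u zaev manifold, the concentration-compactness argument in the nonautonomous case, and the construction of a minimizer via $\max_{t>0}\mathcal{I}^{\infty}(u_t)$) has already been absorbed into Corollary \ref{cor1.2}. The only point requiring a moment's care is confirming that the Poho\u zaev identity \eqref{Pi} is valid for \emph{every} $H^1$-solution under only the Berestycki--Lions-type assumptions (F1)-(F3), but this is precisely what [MS, Theorem 3] provides under the hypotheses in force, so no further regularity argument is needed.
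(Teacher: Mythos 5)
Your proof is correct and follows essentially the same route as the paper: invoke Corollary \ref{cor1.2} to produce a minimizer $\bar{u}$ of $\mathcal{I}^{\infty}$ on $\mathcal{M}^{\infty}$ that is also a critical point, then use the Poho\u zaev identity \eqref{Pi} from \cite[Theorem 3]{MS} to show every nontrivial solution lies in $\mathcal{M}^{\infty}$, closing the two-sided inequality.
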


 \begin{remark}\label{rem1.4}
 {\rm (V3)} is a mild condition. In fact, $V$ satisfies {\rm (V3)}  if the following assumption holds:
 \begin{itemize}
 \item[{\rm(V3$'$)}] $V\in \mathcal{C}^1(\R^N, \R)$ and $t\mapsto \frac{NV(tx)+\nabla V(tx)\cdot (tx)}{t^{\alpha}}$ is nonincreasing on
 $(0,\infty)$ for all $x\in \R^N$.
 \end{itemize}
 There are indeed many functions which satisfy {\rm (V1)-(V3)}. For example
 \par
 {\rm i)}. $V(x) =a-\frac{b}{|x|^2+1}$ with $a>b$ and $\alpha Na+(\alpha+2)(N-2)^3>[(N-2)(\alpha+2)+2(\alpha+4)]b>0$;
 \par
 {\rm ii)}. $V(x) =a-\frac{b}{|x|^{\alpha}+1}$ with $a\ge (2+\alpha/N)b>0$;
 \par
 {\rm iii)}. $V(x) =a-be^{-|x|^{\alpha}}$ with $a>b>0$.
 \end{remark}

 \begin{remark}\label{rem1.5}
  We point out that, as a consequence of Theorem \ref{thm1.1}, the least energy value $m:=\inf_{\mathcal{M}}\mathcal{I}$
 has a minimax  characterization $ m=\inf_{u\in \Lambda}\max_{t > 0}\mathcal{I}(u_t)$ which is much simpler than the usual characterizations
 related to the Mountain Pass level.
 \end{remark}

  \par
    In the second part of this paper, we are interested in the existence of the least energy solutions
  for \eqref{SE} under (F1)-(F3). In this case, we can replace (V3) by the following weaker decay
  assumption on $\nabla V$:
 \begin{itemize}
 \item[(V4)] $V\in \mathcal{C}^1(\R^N, \R)$ and there exist $\theta'\in (0,1)$ and $\bar{R}\ge 0$ such that
 $$
   \nabla V(x)\cdot x\le
   \left\{\begin{array}{ll}\frac{(N-2)^2}{2|x|^2}, \ \ & 0<|x| < \bar{R};\\
     \theta'\alpha V(x), \ \ & |x|\ge \bar{R}.
   \end{array}\right.
 $$
 \end{itemize}
 In this direction, we have the following theorem.

 \begin{theorem}\label{thm1.6}
  Assume that $V$ and $f$ satisfy {\rm (V1), (V2), (V4)}  and {\rm (F1)-(F3)}. Then problem \eqref{SE}
 has a solution $\bar{u}\in H^1(\R^N)$ such that $\mathcal{I}(\bar{u}) =\inf_{\mathcal{K}}\mathcal{I}$, where
 $$
   \mathcal{K}:=\left\{u\in H^1(\R^N)\setminus \{0\} : \mathcal{I}'(u)=0\right\}.
 $$
 \end{theorem}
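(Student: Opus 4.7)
The plan is to take a minimizing sequence in $\mathcal{K}$, bound it in $H^1(\R^N)$ via a Poho\v zaev--energy identity that uses (V4) and Hardy's inequality, and extract a nontrivial weak limit by excluding both vanishing and escape to infinity.

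First I set $m := \inf_{\mathcal{K}}\mathcal{I}$ and pick $\{u_n\}\subset\mathcal{K}$ with $\mathcal{I}(u_n)\to m$. Each $u_n$ satisfies $\langle\mathcal{I}'(u_n),u_n\rangle=0$ and, by Lemma \ref{lem 4.2}, the Poho\v zaev identity $\mathcal{P}(u_n)=0$. The key algebraic identity is
\[
\mathcal{I}(u_n)\;=\;\mathcal{I}(u_n)-\tfrac{1}{N+\alpha}\mathcal{P}(u_n)\;=\;\tfrac{\alpha+2}{2(N+\alpha)}\|\nabla u_n\|_2^{2}+\tfrac{1}{2(N+\alpha)}\int_{\R^N}\bigl[\alpha V(x)-\nabla V(x)\cdot x\bigr] u_n^{2}\,\mathrm{d}x.
\]
Splitting the last integral at $|x|=\bar R$ and applying the two cases of (V4), the outer piece is $\ge(1-\theta')\alpha\int_{|x|\ge\bar R} V u_n^{2}\ge 0$, while the inner piece is $\ge -\tfrac{(N-2)^2}{2}\int_{|x|<\bar R}|x|^{-2}u_n^{2}$; Hardy's inequality $\int|x|^{-2}u^{2}\le \tfrac{4}{(N-2)^2}\|\nabla u\|_2^{2}$ absorbs this last term into the gradient to give
\[
\mathcal{I}(u_n)\;\ge\;\tfrac{\alpha}{2(N+\alpha)}\|\nabla u_n\|_2^{2}+\tfrac{(1-\theta')\alpha}{2(N+\alpha)}\int_{|x|\ge\bar R} V(x) u_n^{2}\,\mathrm{d}x.
\]
Combining this with $\langle\mathcal{I}'(u_n),u_n\rangle=0$ and the HLS estimate \eqref{Ru} yields $\|\nabla u_n\|_2\ge c_0>0$ (hence $m>0$); together with (V1) and Sobolev embedding on a large ball, $\{u_n\}$ is bounded in $H^1(\R^N)$.

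I then pass to a subsequence with $u_n\rightharpoonup\bar u$. Lions' concentration-compactness lemma rules out vanishing: if $\sup_{y\in\R^N}\int_{B_1(y)}u_n^{2}\to 0$, then $u_n\to 0$ in $L^p(\R^N)$ for $p\in(2,2^*)$, \eqref{Ru} forces the Choquard term to vanish, and $\langle\mathcal{I}'(u_n),u_n\rangle=0$ drives $u_n\to 0$ in $H^1$, contradicting $m>0$. Therefore $\int_{B_1(y_n)}u_n^{2}\ge\delta>0$ for some $\{y_n\}\subset\R^N$; if $\{y_n\}$ is bounded then $\bar u\ne 0$. The delicate case is $|y_n|\to\infty$: the translates $\tilde u_n(\cdot):=u_n(\cdot+y_n)$ converge weakly to a nontrivial $\tilde u$ solving the autonomous problem \eqref{SE1}, and Fatou together with (V2) and Corollary \ref{cor1.2} give $m^\infty\le\mathcal{I}^\infty(\tilde u)\le m$. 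I propose to derive a contradiction by establishing the strict inequality $m<m^\infty$ (the autonomous case $V\equiv V_\infty$ is covered by Theorem \ref{thm1.3}): with $w$ the autonomous minimizer ($\mathcal{I}^\infty(w)=m^\infty$) one has $\mathcal{I}(w_t)=\mathcal{I}^\infty(w_t)+\tfrac{t^N}{2}\int_{\R^N}[V(tx)-V_\infty]w^{2}\,\mathrm{d}x$; evaluation at the maximum $t^*$ of the autonomous path yields $\mathcal{I}(w_{t^*})<\mathcal{I}^\infty(w_{t^*})=m^\infty$ by (V2) with strict inequality on a set of positive measure, and a Jeanjean--type scaling then produces a critical point $u^*\in\mathcal{K}$ with $\mathcal{I}(u^*)\le\max_t\mathcal{I}(w_t)<m^\infty$, contradicting $m\ge m^\infty$. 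Hence $\bar u\ne 0$.

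Finally, since the $u_n$ are critical points and the nonlocal Choquard term is weakly continuous on bounded sets (by HLS duality and local Rellich compactness, as in the handling of \eqref{Ru}), $\mathcal{I}'(\bar u)=0$, so $\bar u\in\mathcal{K}$ and $\mathcal{I}(\bar u)\ge m$. Applying the identity of the first paragraph to $\bar u$, together with weak lower semi-continuity of $\|\nabla\cdot\|_2^{2}$ and of $\int[\alpha V-\nabla V\cdot x] u^{2}\,\mathrm{d}x$ (nonnegative after Hardy absorption of the inner piece), gives $\mathcal{I}(\bar u)\le\liminf\mathcal{I}(u_n)=m$; hence $\mathcal{I}(\bar u)=m$. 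The hard step is the non-escape argument: under (V3) the monotonicity of $u\mapsto u(\cdot/t)$ produced $m\le \max_t\mathcal{I}(w_t)$ automatically, but under the weaker (V4) this monotonicity is lost, so the strict comparison $m<m^\infty$ must be extracted by a delicate scaling--plus--minimax argument with the degenerate case $V\equiv V_\infty$ peeled off separately. A secondary technicality, the passage of $\mathcal{P}(u_n)=0$ to the weak limit, is circumvented by working throughout with the combined energy--Poho\v zaev quantity, which is a positive combination of $\|\nabla u\|_2^{2}$ and a weighted $L^2$ term (after Hardy) and hence weakly lower semi-continuous.
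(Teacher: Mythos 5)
Your overall strategy (minimize over $\mathcal{K}$, use the energy-minus-Poho\v zaev quantity together with (V4) and Hardy to get boundedness, then exclude vanishing and dichotomy) is in the same spirit as the paper's Lemma \ref{lem 4.7} and Lemma \ref{lem 4.9}, but the central step is missing, and it is precisely the step the paper spends most of Section 3 on.

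The gap is in the claim that ``a Jeanjean-type scaling then produces a critical point $u^*\in\mathcal{K}$ with $\mathcal{I}(u^*)\le\max_t\mathcal{I}(w_t)<m^\infty$.'' Under (V3) the paper's Lemma \ref{lem 2.7} gives a Poho\v zaev--Nehari manifold parametrization: every $u\in\Lambda$ has a unique $t_u$ with $u_{t_u}\in\mathcal{M}$, and the quantity $\mathcal{I}(u)-\mathcal{I}(u_t)-\frac{1-t^{N+\alpha}}{N+\alpha}\mathcal{P}(u)$ is nonnegative (Lemma \ref{lem 2.2}), so minimization on $\mathcal{M}$ yields a critical point directly. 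Under (V4) this machinery is unavailable, as you yourself note. The assertion that a scaling argument ``produces a critical point'' is not a proof: in general, $\max_t\mathcal{I}(w_t)$ being less than $m^\infty$ gives only an upper bound on the mountain pass level $c_1$, not a critical point at that level; extracting a critical point requires a Palais--Smale sequence whose boundedness and compactness must be established. This is exactly why the paper introduces the family $\mathcal{I}_\lambda=A-\lambda B$ and invokes the Jeanjean--Toland result (Proposition \ref{pro 4.1}) to get \emph{bounded} (PS)-sequences for almost every $\lambda$, proves the strict inequality $c_\lambda<m_\lambda^\infty$ for $\lambda$ near $1$ (Lemma \ref{lem 4.5}, which itself uses the quantitative identity \eqref{G45}), applies the splitting Lemma \ref{lem 4.6} to obtain a critical point for a.e.\ $\lambda$ (Lemma \ref{lem 4.8}), and then passes $\lambda\to 1$ using the Poho\v zaev identity and Lemma \ref{lem 4.7} to keep the approximating critical points bounded (Lemma \ref{lem 4.9}). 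Your proposal replaces this entire chain with a single unexplained assertion.

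There is also a structural circularity: you take a minimizing sequence in $\mathcal{K}$ in your first paragraph, which presupposes $\mathcal{K}\ne\emptyset$, but the only candidate element of $\mathcal{K}$ is the $u^*$ you propose to construct in the middle of your non-escape argument. The paper avoids this by first establishing $\mathcal{K}\ne\emptyset$ and $\hat m\le c_1<m_1^\infty$ via Lemma \ref{lem 4.9}, and only then running the minimizing-sequence argument with the comparison level $m_1^\infty$ already in hand. Finally, even in the non-escape case, showing that $\mathcal{I}^\infty(\tilde u)\le m$ after translation requires the profile decomposition of Lemma \ref{lem 4.6} (or an equivalent Brezis--Lieb argument); a bare appeal to Fatou and (V2) is not sufficient because part of the mass may stay put while part escapes.
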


 \begin{remark}\label{rem1.7}
  {\rm (V1), (V2)}  and {\rm (V4)}  are satisfied by a very wide class of potentials. For example,
 $V(x)=a-\frac{b}{1+|x|^{\beta}}$ satisfies {\rm (V1)}  and {\rm (V4)} for $\beta>0$ and $\alpha a>
 (\alpha+\beta)b>0$.
 \end{remark}

 \vskip4mm
 \par
   Applying Theorem \ref{thm1.6} to the following perturbed problem:
 \begin{equation}\label{KE8}
 \left\{
   \begin{array}{ll}
     -\triangle u+[V_{\infty}-\varepsilon h(x)]u=(I_{\alpha}*F(u))f(u), & x\in \R^N; \\
     u\in H^1(\R^N),
   \end{array}
 \right.
 \end{equation}
 where $V_{\infty}$ is a positive constant and the function $h \in \mathcal{C}^1(\R^N, \R)$ verifies:

 \vskip2mm
 \noindent
 (H1)\ \ $h(x) \ge 0$ for all $x\in \R^N$ and $\lim_{|x|\to\infty}h(x)=0$;

 \vskip2mm
 \noindent
 (H2)\ \ $\sup_{x\in \R^N}\left[-\nabla h(x)\cdot x\right]<\infty$.

 \vskip2mm
 \noindent
 Then we have the following corollary.

 \begin{corollary}\label{cor1.8}
   Assume that $h$ and $f$ satisfy {\rm (H1), (H2)}  and {\rm (F1)-(F3)}.  Then there
 exists a constant $\hat{\varepsilon}>0$ such that problem \eqref{KE8} has a least energy solution $\bar{u}_{\varepsilon}\in
 H^1(\R^N)\setminus \{0\}$ for all $0<\varepsilon\le \hat{\varepsilon}$.
 \end{corollary}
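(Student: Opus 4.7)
The plan is to deduce Corollary \ref{cor1.8} by a direct application of Theorem \ref{thm1.6} to the potential
\[
V_{\varepsilon}(x) := V_{\infty}-\varepsilon h(x),
\]
the only work being to verify that $V_{\varepsilon}$ satisfies \textrm{(V1), (V2), (V4)} as soon as the parameter $\varepsilon$ is small enough. Since $h\in\mathcal{C}^1(\R^N,\R)$ with $h\ge 0$ and $h(x)\to 0$ as $|x|\to\infty$, the function $h$ is bounded; set $H:=\|h\|_{\infty}<\infty$ and $M:=\sup_{x\in\R^N}[-\nabla h(x)\cdot x]$, the latter being finite by \textrm{(H2)}.

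First I would take $\hat{\varepsilon}>0$ so small that $\hat{\varepsilon}H\le V_{\infty}/2$; then for $0<\varepsilon\le\hat{\varepsilon}$, $V_{\varepsilon}\in\mathcal{C}^1(\R^N,\R)$, $V_{\varepsilon}(x)\ge V_{\infty}/2>0$ and $V_{\varepsilon}(x)\to V_{\infty}$, giving \textrm{(V1)}; and $V_{\varepsilon}(x)=V_{\infty}-\varepsilon h(x)\le V_{\infty}$ by \textrm{(H1)}, giving \textrm{(V2)}. For \textrm{(V4)} my intention is the simplest possible choice, namely $\bar{R}=0$ and $\theta'=1/2$, which makes the first branch of the inequality vacuous and leaves only
\[
\nabla V_{\varepsilon}(x)\cdot x=-\varepsilon\,\nabla h(x)\cdot x\le \varepsilon M \le \tfrac{1}{2}\alpha\,V_{\varepsilon}(x)\qquad\text{for every }x\in\R^N
\]
to be checked. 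Using $V_{\varepsilon}(x)\ge V_{\infty}-\varepsilon H$, this reduces to $\varepsilon(M+\tfrac{\alpha}{2}H)\le \tfrac{\alpha}{2}V_{\infty}$ (with the case $M\le 0$ trivial), which I ensure by shrinking $\hat{\varepsilon}$ once more, e.g.\ $\hat{\varepsilon}\le \alpha V_{\infty}/(2M+\alpha H)$ whenever $M>0$.

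With $V_{\varepsilon}$ satisfying \textrm{(V1), (V2), (V4)} and $f$ satisfying \textrm{(F1)-(F3)} by hypothesis, Theorem \ref{thm1.6} applied to equation \eqref{KE8} produces a critical point $\bar{u}_{\varepsilon}\in H^1(\R^N)\setminus\{0\}$ of the associated functional with energy equal to $\inf_{\mathcal{K}_{\varepsilon}}\mathcal{I}_{\varepsilon}$; this is precisely the asserted least energy solution. There is no genuine obstacle here: the only mild subtlety is the choice of $\bar{R}$ in \textrm{(V4)}, and taking $\bar{R}=0$ bypasses the awkward inverse-square bound near the origin and reduces the verification to a single pointwise inequality controlled by $\varepsilon$.
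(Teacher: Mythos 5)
Your proof is correct and follows exactly the route the paper intends (the paper itself only says ``Applying Theorem \ref{thm1.6} to the perturbed problem'' and leaves the verification implicit): you apply Theorem \ref{thm1.6} with $V_{\varepsilon}=V_{\infty}-\varepsilon h$, and the checks of (V1), (V2), (V4) — in particular the clean choice $\bar{R}=0$, $\theta'=1/2$, together with the bound $V_{\varepsilon}\ge V_{\infty}-\varepsilon H$ and (H2) — are all valid and give an explicit admissible $\hat{\varepsilon}$.
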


 \par
   In the last part of the present paper, we consider the singularly perturbed nonlinear Choquard equation
 \eqref{KE9}, and prove the existence of semiclassical ground state solutions for \eqref{KE9}
 under weaker assumptions on $V$ :
 \begin{itemize}
 \item[(V5)]  $0<V(x_0):=\min_{x\in\R^N}V(x)< V_{\infty}$ for some $x_0\in \R^N$;

 \item[(V6)] $V\in \mathcal{C}^1(\R^N, \R)$ and there exists $\theta''\in (0,1)$ such that
 $$
   \nabla V(x)\cdot x\le \theta''\alpha V(x), \ \ \ \ \forall \ x\in \R^N.
 $$
 \end{itemize}
 Condition (V5) was introduced by Rabinowitz in \cite{Ra}. Our last result is as follows.

 \begin{theorem}\label{thm1.9}
  Assume that $V$ and $f$ satisfy {\rm (V5), (V6)}  and {\rm (F1)-(F3)}. Then there
 exists a constant $\varepsilon_0>0$ determined by terms of $N,V$ and $F$ {\rm (}see Lemma \ref{lem 5.2}{\rm )} such that
 problem \eqref{KE9} has a least energy solution $\bar{u}_{\varepsilon}\in
 H^1(\R^N)\setminus \{0\}$ for all $0<\varepsilon\le \varepsilon_0$.
 \end{theorem}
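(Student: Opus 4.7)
The strategy is to reduce \eqref{KE9} to a non-autonomous Choquard equation of type \eqref{SE} via the substitution $v(x)=u(\varepsilon x)$, which transforms \eqref{KE9} into
\begin{equation*}
-\triangle v+V(\varepsilon x)v=(I_\alpha*F(v))f(v),\qquad x\in\R^N,
\end{equation*}
with associated energy functional $\Phi_\varepsilon(v)=\frac{1}{2}\int[|\nabla v|^2+V(\varepsilon x)v^2]\mathrm{d}x-\frac{1}{2}\int(I_\alpha*F(v))F(v)\mathrm{d}x$ related to $\mathcal{I}_\varepsilon$ by $\mathcal{I}_\varepsilon(u)=\varepsilon^N\Phi_\varepsilon(v)$. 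Because (V5) does not supply the global bound $V(x)\le V_\infty$ demanded by Theorem \ref{thm1.6}, the approach of that theorem does not apply directly, and one must replace (V2) by exploiting the strict inequality $V(x_0)<V_\infty$. I would work on the Poho\u zaev manifold $\mathcal{M}_\varepsilon:=\{v\in H^1(\R^N)\setminus\{0\}:\mathcal{P}_\varepsilon(v)=0\}$ of $\Phi_\varepsilon$, prove that $m_\varepsilon:=\inf_{\mathcal{M}_\varepsilon}\Phi_\varepsilon$ is attained for all sufficiently small $\varepsilon$, and show that the minimizer is a genuine critical point.

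The central quantitative step, which determines $\varepsilon_0$ as in Lemma \ref{lem 5.2}, is the strict energy gap $m_\varepsilon<m^{V_\infty}$, where $m^{V_\infty}$ is the autonomous least-energy level furnished by Corollary \ref{cor1.2}. I would derive it as follows: Corollary \ref{cor1.2} produces a ground state $w_0$ at the autonomous level $m^{V(x_0)}$ associated with constant potential $V(x_0)$, and the scaling path $t\mapsto (w_0)_t$, combined with the minimax characterization inherited from Theorem \ref{thm1.1}, yields strict monotonicity of $\lambda\mapsto m^\lambda$, so (V5) gives $m^{V(x_0)}<m^{V_\infty}$. Using the translate $\tilde{w}_\varepsilon(x):=w_0(x-x_0/\varepsilon)$ as a test function and projecting it onto $\mathcal{M}_\varepsilon$ by a dilation $t_\varepsilon$, dominated convergence and continuity of $V$ at $x_0$ give $\Phi_\varepsilon((\tilde{w}_\varepsilon)_{t_\varepsilon})\to m^{V(x_0)}$ as $\varepsilon\to 0$, hence $\limsup_{\varepsilon\to 0}m_\varepsilon\le m^{V(x_0)}<m^{V_\infty}$; quantifying this convergence produces the explicit threshold $\varepsilon_0$.

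The hard core of the proof is then a concentration-compactness analysis. I would extract a Poho\u zaev--Palais--Smale minimizing sequence $\{v_n\}\subset\mathcal{M}_\varepsilon$ along the lines used for Theorems \ref{thm1.1} and \ref{thm1.6}. Combining $\Phi_\varepsilon(v_n)\to m_\varepsilon$ with $\mathcal{P}_\varepsilon(v_n)\to 0$ eliminates the nonlocal term and, together with (V6), which bounds $\varepsilon\nabla V(\varepsilon x)\cdot x$ by $\theta''\alpha V(\varepsilon x)$, and with the lower bound $V\ge V(x_0)>0$ implicit in (V5), gives boundedness of $\{v_n\}$ in $H^1(\R^N)$. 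After translations $v_n(\cdot+y_n)\rightharpoonup\bar{v}$, either $\bar{v}\ne 0$ and one concludes by weak lower semicontinuity, or $|\varepsilon y_n|\to\infty$ and a nonlocal Brezis--Lieb splitting for the Riesz convolution forces the escaping mass to carry energy at least $m^{V_\infty}$, contradicting the strict gap. This step, which must rule out both vanishing and dichotomy using only the strict energy gap even though (V5) permits $V>V_\infty$ on a noncompact region, is the main obstacle. Finally, a Lagrange-multiplier argument along the scaling path $t\mapsto v_t$, as in Theorem \ref{thm1.1}, forces the multiplier to vanish, so $\bar{v}$ is a critical point of $\Phi_\varepsilon$; reversing the scaling via $\bar{u}_\varepsilon(x):=\bar{v}(x/\varepsilon)$ delivers the least energy solution of \eqref{KE9}, since every nontrivial critical point of $\mathcal{I}_\varepsilon$ satisfies the Poho\u zaev identity and its scaled counterpart therefore lies in $\mathcal{M}_\varepsilon$.
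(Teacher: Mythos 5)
Your proposal takes a genuinely different route from the paper. You propose direct minimization on the Poho\u zaev manifold $\mathcal{M}_\varepsilon$, followed by a Lagrange-multiplier argument to show the minimizer is a critical point, as in the proof of Theorem \ref{thm1.1}. The paper instead applies the Jeanjean--Toland monotonicity trick (Proposition \ref{pro 4.1}) to the scaled family $\mathcal{I}_\lambda^\varepsilon$, $\lambda\in[1/2,1]$: it establishes the uniform energy gap $c_\lambda^\varepsilon < m_\lambda^\infty$ for $\lambda\in(\tilde\lambda,1]$ and $\varepsilon\in[0,\varepsilon_0]$ (Lemma \ref{lem 5.2}, with the explicit threshold $\varepsilon_0=\hat r/(R_0 T)$), extracts bounded Palais--Smale sequences and critical points of $\mathcal{I}_\lambda^\varepsilon$ at levels $c_\lambda^\varepsilon$ for a.e.\ $\lambda$ (Lemma \ref{lem 5.3}), passes $\lambda_n\to 1$ using the Poho\u zaev identity and (V6) to get a nontrivial critical point of $\mathcal{I}^\varepsilon$ (Lemma \ref{lem 5.4}), and finally runs the same compactness argument on a least-energy minimizing sequence of critical points.

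There is a genuine gap in your approach. The Poho\u zaev manifold strategy of Theorem \ref{thm1.1} hinges entirely on Lemma \ref{lem 2.2}, the inequality
\[
\mathcal{I}(u) \ge \mathcal{I}(u_t)+\frac{1-t^{N+\alpha}}{N+\alpha}\,\mathcal{P}(u)+\frac{(1-\theta)\mathfrak{g}(t)}{2(N+\alpha)}\|\nabla u\|_2^2,
\]
whose proof requires the monotonicity hypothesis (V3). That inequality is what guarantees uniqueness of the fibering scalar $t_u$ (Lemma \ref{lem 2.7}), the identity $\mathcal{I}(u)=\max_{t>0}\mathcal{I}(u_t)$ on $\mathcal{M}$ (Corollary \ref{cor2.4}), and crucially the natural-constraint property in Lemma \ref{lem 2.13}, where the estimate \eqref{B90} is used to show the Lagrange multiplier must vanish. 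Under (V5) and (V6) alone none of these are available: the map $t\mapsto \Phi_\varepsilon(v_t)$ need not be unimodal, so the projection onto $\mathcal{M}_\varepsilon$ is neither canonical nor unique, and you have no analogue of \eqref{B90} with which to run the Lagrange-multiplier contradiction. This is exactly why the paper abandons the manifold-minimization scheme for Theorems \ref{thm1.6} and \ref{thm1.9} and switches to the Jeanjean--Toland framework, which produces bounded Palais--Smale sequences at the mountain-pass levels directly and therefore never needs $\mathcal{M}_\varepsilon$ to be a natural constraint. A secondary but real issue is that you compare $m_\varepsilon$ with only the single autonomous level $m^{V_\infty}$; the paper needs the family-wise gap $c_\lambda^\varepsilon<m_\lambda^\infty$ for all $\lambda$ near $1$, because the monotonicity trick only yields critical points for almost every $\lambda$, and the splitting Lemma \ref{lem 4.6} must be invoked at each such $\lambda$. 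Your appeal to ``strict monotonicity of $\lambda\mapsto m^\lambda$'' (in the constant-potential parameter) is also asserted rather than proved, and is orthogonal to the quantitative estimate that actually determines $\varepsilon_0$ in Lemma \ref{lem 5.2}.
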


 \begin{remark}\label{rem1.10}
  {\rm (V5)} is weaker than {\rm (V2)}. There are many functions that satisfy {\rm (V5)}  and {\rm (V6)}
 but do not satisfy {\rm (V2)}. For example,
 $V(x)=a-\frac{b\cos |x|^{\beta}}{1+|x|^{\beta}}$ satisfies {\rm (V1), (V5)} and {\rm (V6)}
 for $\beta>0$ and $\alpha a> (\alpha+\beta)b>0$.
 \end{remark}

  \begin{remark}\label{rem1.11}
  Our approach could be applied to deal with many equations,
 such as Schr\"odinger equations, see {\rm\cite{TC3}}. In the existing literature, Schr\"odinger equations were considered by many authors {\rm(for example \cite{BL,CWY,GLW,Je0,JT} )}.
 \end{remark}

 \par
   To prove Theorem \ref{thm1.1}, we shall divide our arguments into three steps:
 i). Choosing a minimizing sequence $\{u_n\}$ of $\mathcal{I}$ on $\mathcal{M}$, which satisfies
 \begin{equation}\label{A01}
    \mathcal{I}(u_n)\rightarrow m:=\inf_{\mathcal{M}}\mathcal{I}, \ \ \ \ \ \mathcal{P}(u_n)= 0.
 \end{equation}
 Then showing that $\{u_n\}$ is bounded in $H^1(\R^N)$. ii). With a concentration-compactness argument and ``the least energy
 squeeze approach", showing that $\{u_n\}$ converges weakly to some $\bar{u}\in H^1(\R^N)\setminus \{0\}$. And then
 showing that $\bar{u}\in \mathcal{M}$ and $\mathcal{I}(\bar{u})=\inf_{\mathcal{M}}\mathcal{I}$.
 iii). Showing that $\bar{u}$ is a critical point of $\mathcal{I}$. Of them, Step ii) is the most difficult due to
 lack of global compactness and adequate information on $\mathcal{I}'(u_n)$. To avoid relying radial compactness,
 we establish a crucial inequality related to $\mathcal{I}(u)$, $\mathcal{I}(u_t)$ and $\mathcal{P}(u)$ (Lemma \ref{lem 2.2}), it plays
 a crucial role in our arguments, see Lemmas \ref{lem 2.7}, \ref{lem 2.11}, \ref{lem 3.2}, \ref{lem 4.5}, \ref{lem 5.2}. With the
 help of this inequality, we then can complete Step ii) by using Lions' concentration compactness, the least energy squeeze approach
 and some subtle analysis. Moreover, such an approach could be useful for the study of other problems where radial symmetry of bounded sequence either fails or is not readily available.

 Classically, in order to show the existence of solutions for \eqref{SE}, one compares the critical level with the one of \eqref{SE1}
 (i.e. the problem at infinity). To this end, it is necessary to establish a strict inequality similar to
 $$
   \max_{t\in [0,1]}\mathcal{I}(\gamma_0(t))<\inf\left\{\mathcal{I}^{\infty}(u): u\in H^1(\R^N)\setminus \{0\}
   \ \mbox{is a solution of}\ \eqref{SE1}\right\}
 $$
 for some path $\gamma_0\in \mathcal{C}([0,1], H^1(\R^N))$. Clearly, $\gamma_0(t)>0$ is a natural requirement under (V1),
 which usually involves an additional assumption on $f$ besides (F1)-(F3), such as $f(t)$ is odd and $f(t)t\ge 0$, see \cite[Theorem 1.4]{MS}.
 We would like to point out that the above strict inequality is not used in our arguments, see Section 2. Our approach could be useful for the
 study of other problems where paths or the ground state solutions of the problem at infinity are not sign definite.

 \vskip4mm
 \par
   To prove Theorem \ref{thm1.6}, as in Jeanjean-Tanaka \cite{JT}, for $\lambda\in [1/2, 1]$ we consider the family of functionals $\mathcal{I}_{\lambda} : H^1(\R^N) \rightarrow \R$ defined by
 \begin{equation}\label{Ilu}
   \mathcal{I}_{\lambda}(u)=\frac{1}{2}\int_{\R^N}\left(|\nabla u|^2+V(x)u^2\right)\mathrm{d}x
            -\frac{\lambda}{2}\int_{\R^N}(I_{\alpha}*F(u))F(u)\mathrm{d}x.
 \end{equation}
 These functionals have a Mountain Pass geometry, and denoting $c_{\lambda}$ the corresponding Mountain Pass levels.
 Corresponding to \eqref{Ilu}, we also let
 \begin{equation}\label{Il}
   \mathcal{I}_{\lambda}^{\infty}(u)=\frac{1}{2}\int_{\R^N}\left(|\nabla u|^2+V_{\infty}u^2\right)\mathrm{d}x
       -\frac{\lambda}{2}\int_{\R^N}(I_{\alpha}*F(u))F(u)\mathrm{d}x.
 \end{equation}
 By Corollary \ref{cor1.2}, for every $\lambda\in [1/2, 1]$, there exists a minimizer $u_{\lambda}^{\infty}$ of $\mathcal{I}_{\lambda}^{\infty}$
 on $\mathcal{M}_{\lambda}^{\infty}$, where
 \begin{eqnarray}\label{Ml}
   \mathcal{M}_{\lambda}^{\infty}:=\left\{u\in H^1(\R^N)\setminus \{0\}: \mathcal{P}_{\lambda}^{\infty}(u)=0\right\}
 \end{eqnarray}
 and
 \begin{eqnarray}\label{JlL}
  \mathcal{P}_{\lambda}^{\infty}(u)
    &  =  & \frac{N-2}{2}\|\nabla u\|_2^2+NV_{\infty}\|u\|_2^2-\frac{(N+\alpha)\lambda}{2}\int_{\R^N}(I_{\alpha}*F(u))F(u)\mathrm{d}x.
 \end{eqnarray}
 Let
 $$
   A(u)=\frac{1}{2}\int_{\R^N}\left(|\nabla u|^2+V(x)u^2\right)\mathrm{d}x, \ \ \ \
   B(u)=\frac{1}{2}\int_{\R^N}(I_{\alpha}*F(u))F(u)\mathrm{d}x.
 $$
 Then $\mathcal{I}_{\lambda}(u)=A(u)-\lambda B(u)$. Since $B(u)$ is not sign definite, it prevents us from employing Jeanjean's
 monotonicity trick \cite{Je}. More trouble, it is difficult to show the following key inequality
 \begin{eqnarray}\label{cm0}
   c_{\lambda}<m_{\lambda}^{\infty} :=\inf_{u\in \mathcal{M}_{\lambda}^{\infty}}\mathcal{I}_{\lambda}^{\infty}(u)
     \left(=\mathcal{I}_{\lambda}^{\infty}(u_{\lambda}^{\infty})\right),  \ \ \ \ \lambda\in [1/2, 1]
 \end{eqnarray}
 due to the minimizer $u_{\lambda}^{\infty}$ being not positive definite.

 \par
   Thanks to the work of Jeanjean-Toland \cite{JTo}, $\mathcal{I}_{\lambda}$ still has a bounded (PS)-sequence
 $\{u_n(\lambda)\} \subset H^1(\R^N)$ at level $c_{\lambda}$ for almost every $\lambda\in [1/2,1]$. Different from the arguments in the existing literature, by means of $u_1^{\infty}$ and the key inequality established in Lemma \ref{lem 2.2}, we can find a constant $\bar{\lambda}\in [1/2, 1)$ and then prove directly the following inequality
 \begin{eqnarray}\label{cm1}
   c_{\lambda}<m_{\lambda}^{\infty},\ \ \ \ \lambda\in (\bar{\lambda}, 1],
 \end{eqnarray}
 see Lemma \ref{lem 4.5}. In particular, it is not require any information on sign of $u_1^{\infty}$ in our arguments. Applying \eqref{cm1} and a precise decomposition of bounded (PS)-sequences in \cite{JT}, we can get a nontrivial critical point $u_{\lambda}$ of $\mathcal{I}_{\lambda}$ which possesses energy $c_{\lambda}$  for almost every $\lambda\in [\bar{\lambda}, 1]$. Finally, with a Poho\u zaev identity  we proved that \eqref{SE} admits a least energy solution under (V1), (V2), (V4) and (F1)-(F3).

 \vskip4mm
 \par
   Throughout the paper we make use of the following notations:

 \vskip4mm
 \par
     $\spadesuit$ \ $H^1(\R^N)$ denotes the usual Sobolev space equipped with the inner product and norm
 $$
   (u, v)=\int_{\R^N}(\nabla u\cdot \nabla v+uv)\mathrm{d}x, \ \ \|u\|=(u, u)^{1/2},
     \ \ \forall \ u,v\in H^1(\R^N);
 $$

 \par
     $\spadesuit$ \ $L^s(\R^N) (1\le s< \infty)$  denotes the Lebesgue space with the norm $\|u\|_s
 =\left(\int_{\R^N}|u|^s\mathrm{d}x\right)^{1/s}$;

 \par
     $\spadesuit$ \ For any $u\in H^1(\R^N)\setminus \{0\}$, $u_t(x):=u(t^{-1}x)$ for $t>0$;

 \par
     $\spadesuit$ \ For any $x\in \R^N$ and $r>0$, $B_r(x):=\{y\in \R^N: |y-x|<r \}$;

 \par
     $\spadesuit$ \ $C_1, C_2,\cdots$ denote positive constants possibly different in different places.

 \vskip4mm
   The rest of the paper is organized as follows. In Section 2, we give some preliminaries, and give the proofs
 of Theorems \ref{thm1.1} and \ref{thm1.3}. Section 3 is devoted to finding a least energy solution for \eqref{SE}
 and Theorem \ref{thm1.6} will be proved in this section. In the last section, we show the existence of semiclassical
 ground state solutions for \eqref{KE9} and prove Theorem \ref{thm1.9}.

 \vskip6mm

 {\section{Ground state solutions for \eqref{SE}}}
 \setcounter{equation}{0}

 \setcounter{equation}{0}

 \vskip2mm
 \par
   In this section, we give the proofs of Theorems \ref{thm1.1} and \ref{thm1.3}. To this end, we give some useful lemmas. Since $V(x)\equiv V_{\infty}$
 satisfies (V1)-(V3), thus all conclusions on $\mathcal{I}$ are also true for $\mathcal{I}^{\infty}$.  For \eqref{SE1}, we always assume that $V_{\infty}>0$.
 First, by a simple calculation, we can verify Lemma \ref{lem 2.1}.

 \begin{lemma}\label{lem 2.1}
 The following two inequalities hold:
 \begin{equation}\label{B10}
  \mathfrak{g}(t):=2+\alpha-(N+\alpha)t^{N-2}+(N-2)t^{N+\alpha} > \mathfrak{g}(1)=0,   \ \ \ \ \forall \ t\in [0, 1)\cup(1, +\infty),
 \end{equation}
 \begin{equation}\label{B20}
  \mathfrak{h}(t):=\alpha-(N+\alpha)t^{N}+Nt^{N+\alpha} > \mathfrak{h}(1)=0,   \ \ \ \ \forall \ t\in [0, 1)\cup(1, +\infty).
 \end{equation}
 Moreover {\rm (V3)} implies the following inequality holds:
 \begin{eqnarray}\label{V3}
    &     & \left(\alpha+Nt^{N+\alpha}\right)V(x)-(N+\alpha)t^NV(tx)
             +\left(t^{N+\alpha}-1\right)\nabla V(x)\cdot x \nonumber\\
    & \ge & -\frac{(N-2)^2\theta\left[2+\alpha-(N+\alpha)t^{N-2}+(N-2)t^{N+\alpha}\right] }{4|x|^2},
              \ \ \forall \ t\ge 0, \ \ x\in \R^N\setminus \{0\}.
 \end{eqnarray}
 \end{lemma}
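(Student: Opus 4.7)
The statement bundles three inequalities: the first two are single-variable calculus and the third is an integrated form of the monotonicity assumption (V3). I would address them in that order.

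For $\mathfrak{g}$, a direct computation gives
\[
\mathfrak{g}'(t)=(N-2)(N+\alpha)t^{N-3}(t^{\alpha+2}-1),
\]
which is negative on $(0,1)$ and positive on $(1,\infty)$; together with $\mathfrak{g}(0)=2+\alpha>0$ and $\mathfrak{g}(1)=0$, this identifies $t=1$ as the unique global minimum of $\mathfrak{g}$ on $[0,\infty)$, which is \eqref{B10}. Exactly the same argument with $\mathfrak{h}'(t)=N(N+\alpha)t^{N-1}(t^{\alpha}-1)$ yields \eqref{B20}.

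The inequality \eqref{V3} is the heart of the lemma. Fix $x\in\R^{N}\setminus\{0\}$, denote the left-hand side by $L(t)$, and set $\Psi(s):=NV(sx)+\nabla V(sx)\cdot(sx)$. Differentiating the three summands of $L$ separately and regrouping gives the compact form
\[
L'(t)=(N+\alpha)t^{N+\alpha-1}\bigl[\Psi(1)-t^{-\alpha}\Psi(t)\bigr].
\]
Hypothesis (V3) says precisely that $s\mapsto s^{-\alpha}\Psi(s)+\frac{(N-2)^{3}\theta}{4s^{\alpha+2}|x|^{2}}$ is nonincreasing on $(0,\infty)$, so comparing its values at $s=1$ and $s=t$ yields
\[
\Psi(1)-t^{-\alpha}\Psi(t)\ \ge\ \frac{(N-2)^{3}\theta}{4|x|^{2}}\bigl(t^{-\alpha-2}-1\bigr) \quad\text{for } t\ge 1,
\]
and the reverse inequality for $0<t\le 1$; note that $L(1)=0$ by direct substitution.

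Plugging this bound into the formula for $L'$ produces $L'(t)\ge \frac{(N+\alpha)(N-2)^{3}\theta}{4|x|^{2}}(t^{N-3}-t^{N+\alpha-1})$ for $t\ge 1$ and the reverse for $0<t\le 1$. Integrating from $1$ to $t$ in the first case and from $t$ to $1$ in the second, using the primitive $\int(s^{N-3}-s^{N+\alpha-1})\,ds=\frac{s^{N-2}}{N-2}-\frac{s^{N+\alpha}}{N+\alpha}$ and the factorization $(N+\alpha)(N-2)^{3}/[(N-2)(N+\alpha)]=(N-2)^{2}$, both regimes collapse to the same bound
\[
L(t)\ \ge\ -\frac{(N-2)^{2}\theta}{4|x|^{2}}\bigl[(2+\alpha)-(N+\alpha)t^{N-2}+(N-2)t^{N+\alpha}\bigr]\ =\ -\frac{(N-2)^{2}\theta}{4|x|^{2}}\mathfrak{g}(t),
\]
which is \eqref{V3}. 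The only mildly delicate point is the sign bookkeeping across the two regimes $t<1$ and $t>1$; fortunately the signs of $\Psi(1)-t^{-\alpha}\Psi(t)$, of the integration interval, and of the integrand $t^{N-3}-t^{N+\alpha-1}$ all flip in unison, so the two cases yield the same one-sided inequality on $L(t)-L(1)$.
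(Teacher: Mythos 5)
Your proof is correct. The two scalar inequalities \eqref{B10} and \eqref{B20} are verified exactly as one would: the derivatives $\mathfrak{g}'(t)=(N-2)(N+\alpha)t^{N-3}(t^{\alpha+2}-1)$ and $\mathfrak{h}'(t)=N(N+\alpha)t^{N-1}(t^{\alpha}-1)$ vanish only at $t=1$ and change sign there, so $t=1$ is the unique minimizer on $[0,\infty)$. For \eqref{V3}, your computation $L'(t)=(N+\alpha)t^{N+\alpha-1}\bigl[\Psi(1)-t^{-\alpha}\Psi(t)\bigr]$ with $\Psi(s)=NV(sx)+\nabla V(sx)\cdot(sx)$ is right, the translation of (V3) into the two one-sided bounds on $\Psi(1)-t^{-\alpha}\Psi(t)$ is right, and after antidifferentiation both regimes indeed yield $L(t)\ge -\frac{(N-2)^2\theta}{4|x|^2}\mathfrak{g}(t)$; the identity $(N+\alpha)(N-2)^3\bigl[\frac{t^{N-2}-1}{N-2}-\frac{t^{N+\alpha}-1}{N+\alpha}\bigr]=-(N-2)^2\mathfrak{g}(t)$ is the algebraic heart and checks out. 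The paper gives no explicit proof (it simply says the lemma is verified ``by a simple calculation''), and your derivation is the natural one: (V3) is a monotonicity hypothesis, and the pointwise inequality \eqref{V3} is its integrated consequence via the ODE $L'(t)=(N+\alpha)t^{N+\alpha-1}[\Psi(1)-t^{-\alpha}\Psi(t)]$ with initial value $L(1)=0$. The only cosmetic remark is that (V3) constrains $t\in(0,\infty)$ while the conclusion is claimed for $t\ge 0$; the case $t=0$ follows from the case $t>0$ by continuity of both sides (for $N\ge 3$ the integrand $s^{N-3}-s^{N+\alpha-1}$ is bounded near $0$, and $t^NV(tx)\to 0$), which you could state explicitly.
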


 \begin{lemma}\label{lem 2.0}
   Assume that {\rm (V1)-(V3)} hold. Then
 \begin{equation}\label{L20}
    |\nabla V(x)\cdot x|\to 0\ \  \mbox{as}\ |x|\to \infty.
 \end{equation}
 \end{lemma}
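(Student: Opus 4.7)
The plan is to extract the conclusion directly from inequality \eqref{V3} (proved in Lemma \ref{lem 2.1} as a consequence of (V3)) by sending $|x|\to\infty$ for each fixed $t$ and then letting $t\to 1$, squeezing $\nabla V(x)\cdot x$ to zero from both sides.

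First I fix $t>0$ with $t\ne 1$ and pass to the limit $|x|\to\infty$ in \eqref{V3}. By (V1), $V(x)\to V_\infty$ and $V(tx)\to V_\infty$ (the latter because $|tx|=t|x|\to\infty$ for fixed $t>0$), while the right-hand side of \eqref{V3} is $O(|x|^{-2})=o(1)$. Passing to the limit, the inequality reduces to
\[
V_\infty\,\mathfrak{h}(t)+\bigl(t^{N+\alpha}-1\bigr)\nabla V(x)\cdot x\ \ge\ o(1)\qquad\text{as }|x|\to\infty,
\]
where $\mathfrak{h}(t)=\alpha-(N+\alpha)t^{N}+Nt^{N+\alpha}$ is the function appearing in \eqref{B20}. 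Splitting on the sign of $t^{N+\alpha}-1$, the case $t>1$ yields
\[
\liminf_{|x|\to\infty}\nabla V(x)\cdot x\ \ge\ -\frac{V_\infty\,\mathfrak{h}(t)}{t^{N+\alpha}-1},
\]
and the case $t\in(0,1)$ (after flipping the inequality by the negative factor) yields
\[
\limsup_{|x|\to\infty}\nabla V(x)\cdot x\ \le\ \frac{V_\infty\,\mathfrak{h}(t)}{1-t^{N+\alpha}}.
\]

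The decisive point is that $\mathfrak{h}$ has a double zero at $t=1$ while $t^{N+\alpha}-1$ vanishes only to first order: since $\mathfrak{h}(1)=\mathfrak{h}'(1)=0$ and $\mathfrak{h}''(1)=N(N+\alpha)\alpha$, Taylor's theorem gives
\[
\frac{\mathfrak{h}(t)}{t^{N+\alpha}-1}\ =\ \frac{N\alpha}{2}(t-1)+O\bigl((t-1)^{2}\bigr)\ \longrightarrow\ 0\qquad\text{as }t\to 1.
\]
Letting $t\to 1^{+}$ in the lower bound and $t\to 1^{-}$ in the upper bound accordingly produces $\liminf_{|x|\to\infty}\nabla V(x)\cdot x\ge 0\ge\limsup_{|x|\to\infty}\nabla V(x)\cdot x$, whence $\lim_{|x|\to\infty}\nabla V(x)\cdot x=0$, which is exactly \eqref{L20}. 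The only non-routine ingredient is the double-zero/simple-zero Taylor cancellation above; the rest is a direct two-sided limiting argument driven by \eqref{V3} and (V1).
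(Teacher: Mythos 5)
Your proof is correct and is essentially the same as the paper's: both rearrange \eqref{V3} in the two regimes $0<t<1$ and $t>1$, send $|x|\to\infty$ using (V1), and exploit the cancellation $\mathfrak{h}(t)/(1-t^{N+\alpha})\to 0$ as $t\to 1$ (the paper's display \eqref{202}, which you justify by the Taylor expansion at the double zero of $\mathfrak{h}$). The only difference is presentational: the paper runs the same two one-sided estimates as a contradiction argument along a sequence $\{x_n\}$, whereas you state them directly as bounds on $\limsup$ and $\liminf$ and squeeze, which is slightly cleaner and, incidentally, never needs (V2).
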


 \begin{proof}
 Arguing by contradiction, we assume that there exist $\{x_n\}\subset \R^N$ and $\delta>0$ such that
 \begin{equation}\label{201}
  |x_n|\to \infty, \ \ \mbox{and} \ \ \nabla V(x_n)\cdot x_n\ge \delta\ \mbox{or}\
   \nabla V(x_n)\cdot x_n\le -\delta
  \ \ \ \ \forall\ n\in \N.
 \end{equation}
 Now, we distinguish two case: i) $\nabla V(x_n)\cdot x_n\ge \delta, \forall\ n\in \N$ and
  ii) $\nabla V(x_n)\cdot x_n\le -\delta, \forall\ n\in \N$.

 Case i) $\nabla V(x_n)\cdot x_n\ge \delta, \forall\ n\in \N$. In this case, by \eqref{V3}, one has
 \begin{eqnarray}\label{201}
  \delta
    & \le & \nabla V(x_n)\cdot x_n\nonumber\\
    & \le & \frac{\left(\alpha+Nt^{N+\alpha}\right)V(x_n)-(N+\alpha)t^NV(tx_n)}{1-t^{N+\alpha}}
             +\frac{(N-2)^2\theta\mathfrak{g}(t)}{4(1-t^{N+\alpha})|x_n|^2},
              \ \ \forall \ 0<t<1.
 \end{eqnarray}
 Since
 \begin{equation}\label{202}
   \lim_{|t|\to 1}\frac{\alpha+Nt^{N+\alpha}-(N+\alpha)t^N}{1-t^{N+\alpha}}=0,
 \end{equation}
 there exists $t_1\in (0,1)$ such that
 \begin{equation}\label{203}
   \frac{\left[\alpha+Nt_1^{N+\alpha}-(N+\alpha)t_1^N\right]V_{\infty}}{1-t_1^{N+\alpha}}<\frac{\delta}{2}.
 \end{equation}
 Then it follows from (V2), \eqref{201} and \eqref{203} that
 \begin{eqnarray}\label{204}
  \delta
    & \le & \frac{\left[\alpha+Nt_1^{N+\alpha}-(N+\alpha)t_1^N\right]V(x_n)}{1-t_1^{N+\alpha}}
             +\frac{(N+\alpha)t_1^N}{1-t_1^{N+\alpha}}[V(x_n)-V(t_1x_n)]\nonumber\\
    &     & \ \  +\frac{(N-2)^2\theta\mathfrak{g}(t_1)}{4(1-t_1^{N+\alpha})|x_n|^2}\nonumber\\
    & \le & \frac{\delta}{2}+\frac{(N+\alpha)t_1^N}{1-t_1^{N+\alpha}}[V(x_n)-V(t_1x_n)]
              +\frac{(N-2)^2\theta\mathfrak{g}(t_1)}{4(1-t_1^{N+\alpha})|x_n|^2}\nonumber\\
    &  =  & \frac{\delta}{2}+o(1),
 \end{eqnarray}
 which is a contradiction.

  Case ii) $\nabla V(x_n)\cdot x_n\le -\delta, \forall\ n\in \N$. In this case, by \eqref{V3}, one has
 \begin{eqnarray}\label{205}
  -\delta
    & \ge & \nabla V(x_n)\cdot x_n\nonumber\\
    & \ge & \frac{(N+\alpha)t^NV(tx_n)-\left(\alpha+Nt^{N+\alpha}\right)V(x_n)}{t^{N+\alpha}-1}
             -\frac{(N-2)^2\theta\mathfrak{g}(t)}{4(t^{N+\alpha}-1)|x_n|^2},
              \ \ \forall \ t>1.
 \end{eqnarray}
 From \eqref{202}, there exists $t_2>1$ such that
 \begin{equation}\label{206}
\frac{\left[(N+\alpha)t_2^N-\alpha-Nt_2^{N+\alpha}\right]V_{\infty}}{t_2^{N+\alpha}-1}>-\frac{\delta}{2}.
 \end{equation}
 Then it follows from (V2), \eqref{205} and \eqref{206} that
 \begin{eqnarray}\label{207}
  -\delta
    & \ge & \frac{\left[(N+\alpha)t_2^N-\alpha-Nt_2^{N+\alpha}\right]V(x_n)}{t_2^{N+\alpha}-1}
             +\frac{(N+\alpha)t_2^N}{t_2^{N+\alpha}-1}[V(t_2x_n)-V(x_n)]\nonumber\\
    &     & \ \  -\frac{(N-2)^2\theta\mathfrak{g}(t_2)}{4(t_2^{N+\alpha}-1)|x_n|^2}\nonumber\\
    & \ge & -\frac{\delta}{2}+\frac{(N+\alpha)t_2^N}{t_2^{N+\alpha}-1}[V(t_2x_n)-V(x_n)]
              -\frac{(N-2)^2\theta\mathfrak{g}(t_2)}{4(t_2^{N+\alpha}-1)|x_n|^2}\nonumber\\
    &  =  & -\frac{\delta}{2}+o(1),
 \end{eqnarray}
 which is a contradiction.
 \end{proof}

 \begin{lemma}\label{lem 2.2}
   Assume that {\rm (V1)-(V3), (F1)} and {\rm (F2)} hold. Then
 \begin{eqnarray}\label{B21}
   \mathcal{I}(u)
        & \ge & \mathcal{I}(u_t)+\frac{1-t^{N+\alpha}}{N+\alpha}\mathcal{P}(u)
                 +\frac{(1-\theta)\mathfrak{g}(t)}{2(N+\alpha)}\|\nabla u\|_2^2,\ \ \ \ \forall \ u\in H^1(\R^N), \ \ t > 0.
 \end{eqnarray}
 \end{lemma}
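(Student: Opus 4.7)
The plan is to verify the inequality by direct computation, using the scaling behavior of $\mathcal{I}$ combined with the structural estimate \eqref{V3} and the Hardy inequality. I would expand the quantity
\[
\Delta(u,t) \;:=\; \mathcal{I}(u)-\mathcal{I}(u_t)-\frac{1-t^{N+\alpha}}{N+\alpha}\,\mathcal{P}(u)
\]
and show term by term that $\Delta(u,t)\ge \frac{(1-\theta)\mathfrak{g}(t)}{2(N+\alpha)}\|\nabla u\|_2^2$.

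First I would record the scaling identities: by the change of variable $y=x/t$,
\[
\|\nabla u_t\|_2^2 = t^{N-2}\|\nabla u\|_2^2,\qquad \int_{\R^N} V(x)u_t^2\,dx = t^N\!\int_{\R^N} V(tx)u^2\,dx,
\]
\[
\int_{\R^N}(I_\alpha * F(u_t))F(u_t)\,dx = t^{N+\alpha}\int_{\R^N}(I_\alpha * F(u))F(u)\,dx.
\]
Substituting these into $\Delta(u,t)$ and grouping by the kind of term, I expect three contributions. The nonlocal contribution has coefficient $-\tfrac12 + \tfrac{t^{N+\alpha}}{2} + \tfrac{1-t^{N+\alpha}}{2}=0$, so it cancels exactly — this is the whole point of the particular normalization $\frac{1-t^{N+\alpha}}{N+\alpha}$. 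The kinetic contribution has coefficient $\tfrac12-\tfrac{t^{N-2}}{2}-\tfrac{(N-2)(1-t^{N+\alpha})}{2(N+\alpha)} = \tfrac{\mathfrak{g}(t)}{2(N+\alpha)}$, matching the definition of $\mathfrak{g}$ in \eqref{B10}.

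The genuine work sits in the potential term. Collecting coefficients of $u^2$ and multiplying out, the integrand equals
\[
\frac{1}{2(N+\alpha)}\Bigl[(\alpha+Nt^{N+\alpha})V(x)-(N+\alpha)t^N V(tx)+(t^{N+\alpha}-1)\nabla V(x)\cdot x\Bigr],
\]
which is precisely the expression bounded below in \eqref{V3}. Applying \eqref{V3} from Lemma \ref{lem 2.1}, the potential contribution to $\Delta(u,t)$ satisfies
\[
\mbox{(potential part)} \;\ge\; -\frac{(N-2)^2\theta\,\mathfrak{g}(t)}{8(N+\alpha)}\int_{\R^N}\frac{u^2}{|x|^2}\,dx.
\]
Then the classical Hardy inequality $\int_{\R^N}\frac{u^2}{|x|^2}\,dx\le \frac{4}{(N-2)^2}\|\nabla u\|_2^2$ for $u\in H^1(\R^N)$ (valid because $N\ge 3$) converts this to $-\frac{\theta\,\mathfrak{g}(t)}{2(N+\alpha)}\|\nabla u\|_2^2$.

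Adding the three contributions gives
\[
\Delta(u,t) \;\ge\; \frac{\mathfrak{g}(t)}{2(N+\alpha)}\|\nabla u\|_2^2 - \frac{\theta\,\mathfrak{g}(t)}{2(N+\alpha)}\|\nabla u\|_2^2 = \frac{(1-\theta)\mathfrak{g}(t)}{2(N+\alpha)}\|\nabla u\|_2^2,
\]
which is the desired conclusion. The only nonroutine step is the use of \eqref{V3} together with Hardy's inequality to dominate the potential defect by a multiple of $\|\nabla u\|_2^2$; this is also where the hypothesis $\theta\in [0,1)$ in (V3) is essential, since otherwise one cannot preserve a positive kinetic coefficient after subtraction.
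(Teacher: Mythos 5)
Your proposal is correct and follows essentially the same route as the paper: both expand $\mathcal{I}(u)-\mathcal{I}(u_t)$ using the scaling identities, isolate the $\frac{1-t^{N+\alpha}}{N+\alpha}\mathcal{P}(u)$ term (which makes the nonlocal part cancel and produces the $\frac{\mathfrak{g}(t)}{2(N+\alpha)}\|\nabla u\|_2^2$ kinetic defect), and then control the remaining potential defect via the structural inequality \eqref{V3} combined with Hardy's inequality. The computations match the paper's line by line; the only cosmetic difference is that you organize the argument around $\Delta(u,t)$ rather than around $\mathcal{I}(u)-\mathcal{I}(u_t)$.
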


 \begin{proof} According to Hardy inequality, we have
 \begin{equation}\label{B22}
  \|\nabla u\|_2^2 \ge \frac{(N-2)^2}{4}\int_{\R^N}\frac{u^2}{|x|^2}\mathrm{d}x,  \ \ \forall \ u\in H^1(\R^N).
 \end{equation}
 Note that
 \begin{equation}\label{B23}
   \mathcal{I}(u_t) = \frac{t^{N-2}}{2}\|\nabla u\|_2^2+\frac{t^N}{2}\int_{{\R}^N}V(tx)u^2\mathrm{d}x
             -\frac{t^{N+\alpha}}{2}\int_{\R^N}(I_{\alpha}*F(u))F(u)\mathrm{d}x.
 \end{equation}
 Thus, by \eqref{IU}, \eqref{Jv}, \eqref{B10}, \eqref{V3}, \eqref{B22} and \eqref{B23}, one has
 \begin{eqnarray*}
   &     & \mathcal{I}(u)-\mathcal{I}(u_t)\\
   &  =  & \frac{1-t^{N-2}}{2}\|\nabla u\|_2^2+\frac{1}{2}\int_{{\R}^N}\left[V(x)-t^{N}V(tx)\right]u^2\mathrm{d}x\\
   &     & \ \    -\frac{1-t^{N+\alpha}}{2}\int_{\R^N}(I_{\alpha}*F(u))F(u)\mathrm{d}x\\
   &  =  & \frac{1-t^{N+\alpha}}{N+\alpha}\left\{\frac{N-2}{2}\|\nabla u\|_2^2
             +\frac{1}{2}\int_{\R^N}[NV(x)+\nabla V(x)\cdot x]u^2\mathrm{d}x\right.\\
   &     & \ \ \left.     -\frac{N+\alpha}{2}\int_{\R^N}(I_{\alpha}*F(u))F(u)\mathrm{d}x\right\}\\
   &     & +\frac{2+\alpha-(N+\alpha)t^{N-2}+(N-2)t^{N+\alpha}}{2(N+\alpha)}\|\nabla u\|_2^2\\
   &     &   +\frac{1}{2}\int_{\R^N}\left\{\left[\frac{\alpha+Nt^{N+\alpha}}{N+\alpha}V(x)-t^NV(tx)\right]
             -\frac{1-t^{N+\alpha}}{N+\alpha}\nabla V(x)\cdot x\right\}u^2\mathrm{d}x\\
   & \ge & \frac{1-t^{N+\alpha}}{N+\alpha}\mathcal{P}(u)+\frac{(1-\theta)\mathfrak{g}(t)}{2(N+\alpha)}\|\nabla u\|_2^2.
 \end{eqnarray*}
 This shows that \eqref{B21} holds.
 \end{proof}

   From Lemma \ref{lem 2.2}, we have the following two corollaries.

 \begin{corollary}\label{cor2.3}
   Assume that {\rm (F1)}  and {\rm (F2)} hold. Then
 \begin{eqnarray}\label{B25}
   \mathcal{I}^{\infty}(u)
     &  =  & \mathcal{I}^{\infty}(u_t)+\frac{1-t^{N+\alpha}}{N+\alpha}\mathcal{P}^{\infty}(u)
               +\frac{\mathfrak{g}(t)\|\nabla u\|_2^2+V_{\infty}\mathfrak{h}(t)\|u\|_2^2}{2(N+\alpha)},\nonumber\\
     &     & \ \ \ \ \ \ \ \ \forall \ u\in H^1(\R^N), \ \ t > 0.
 \end{eqnarray}
 \end{corollary}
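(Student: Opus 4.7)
The statement is a clean algebraic refinement of Lemma \ref{lem 2.2} in the autonomous case. Indeed, $V\equiv V_{\infty}$ trivially satisfies (V1)--(V3) with $\theta = 0$, so the Hardy-inequality step in the proof of Lemma \ref{lem 2.2} is unnecessary and the inequality there collapses to an equality; moreover $\nabla V \equiv 0$ eliminates the drift contribution, and the $V$-dependent integrand becomes a pure scalar in $t$. My plan is therefore to redo the computation directly in this setting rather than cite Lemma \ref{lem 2.2} as a black box, in order to track the extra $V_{\infty}\|u\|_2^2$ term that produces $\mathfrak{h}(t)$.

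First I would write out $\mathcal{I}^{\infty}(u_t)$ by the change of variables $y = x/t$, obtaining
\begin{equation*}
\mathcal{I}^{\infty}(u_t) = \tfrac{t^{N-2}}{2}\|\nabla u\|_2^2 + \tfrac{V_{\infty}t^N}{2}\|u\|_2^2 - \tfrac{t^{N+\alpha}}{2}\int_{\R^N}(I_{\alpha}*F(u))F(u)\,\mathrm{d}x,
\end{equation*}
and subtract from $\mathcal{I}^{\infty}(u)$ to express $\mathcal{I}^{\infty}(u)-\mathcal{I}^{\infty}(u_t)$ as a linear combination of $\|\nabla u\|_2^2$, $V_{\infty}\|u\|_2^2$, and the nonlocal term, with purely $t$-dependent coefficients $\tfrac{1-t^{N-2}}{2}$, $\tfrac{1-t^N}{2}$, $-\tfrac{1-t^{N+\alpha}}{2}$ respectively.

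Next I would subtract off $\frac{1-t^{N+\alpha}}{N+\alpha}\mathcal{P}^{\infty}(u)$, whose nonlocal term cancels exactly against the one just produced. Matching the coefficients of $\|\nabla u\|_2^2$ and $V_{\infty}\|u\|_2^2$ then reduces to the two algebraic identities
\begin{equation*}
(N+\alpha)(1-t^{N-2}) - (N-2)(1-t^{N+\alpha}) = 2+\alpha-(N+\alpha)t^{N-2}+(N-2)t^{N+\alpha} = \mathfrak{g}(t),
\end{equation*}
\begin{equation*}
(N+\alpha)(1-t^N) - N(1-t^{N+\alpha}) = \alpha-(N+\alpha)t^N+N t^{N+\alpha} = \mathfrak{h}(t),
\end{equation*}
both of which are immediate from the definitions in Lemma \ref{lem 2.1}. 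Reassembling the pieces yields exactly \eqref{B25}. There is no real obstacle; the only point requiring mild care is that the conclusion is an equality, not merely a lower bound, which is why I prefer to specialise the computation of Lemma \ref{lem 2.2} rather than quote its inequality.
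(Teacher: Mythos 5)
Your proof is correct and matches the paper's intended derivation: the paper prefaces Corollary \ref{cor2.3} with ``From Lemma \ref{lem 2.2}, we have the following two corollaries,'' meaning one should revisit the \emph{exact} decomposition displayed in the proof of Lemma \ref{lem 2.2} (before the (V3)/Hardy estimate is invoked to discard the last integrand), specialise $V\equiv V_{\infty}$ and $\nabla V\equiv 0$, and read off the $\mathfrak{g}(t)$ and $\mathfrak{h}(t)$ coefficients --- precisely what you do. Your two closing algebraic identities are exactly right, and your observation that one must not merely cite the inequality of Lemma \ref{lem 2.2} (since the corollary is an equality) is the correct and necessary remark. One small imprecision in your preamble: the statement that ``the inequality collapses to an equality because $\theta=0$'' is not quite the mechanism --- even with $\theta=0$, the proof of Lemma \ref{lem 2.2} still \emph{drops} the nonnegative $V$-dependent integral to get an inequality; in the autonomous case that dropped term is exactly $\frac{V_{\infty}\mathfrak{h}(t)}{2(N+\alpha)}\|u\|_2^2$ and must be \emph{retained}, which your subsequent direct computation does correctly.
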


 \begin{corollary}\label{cor2.4}
   Assume that {\rm (V1)-(V3), (F1)}  and  {\rm (F2)} hold. Then for $u\in \mathcal{M}$
 \begin{equation}\label{Imax}
   \mathcal{I}(u) = \max_{t> 0}\mathcal{I}(u_t).
 \end{equation}
 \end{corollary}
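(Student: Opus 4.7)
The plan is to derive this directly from Lemma \ref{lem 2.2} by exploiting the defining property of $\mathcal{M}$. First I would recall that any $u \in \mathcal{M}$ satisfies $\mathcal{P}(u)=0$ by definition, so the middle term in the right-hand side of \eqref{B21} vanishes identically. This reduces the inequality, for every $t>0$, to
\begin{equation*}
\mathcal{I}(u) \;\ge\; \mathcal{I}(u_t) + \frac{(1-\theta)\mathfrak{g}(t)}{2(N+\alpha)}\|\nabla u\|_2^2.
\end{equation*}

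Next I would invoke Lemma \ref{lem 2.1}, which gives $\mathfrak{g}(t) > 0 = \mathfrak{g}(1)$ for all $t \in [0,1)\cup(1,+\infty)$, together with the hypothesis $\theta \in [0,1)$ coming from (V3). Since $1-\theta > 0$ and $\|\nabla u\|_2^2 \ge 0$, the extra term on the right is nonnegative and vanishes exactly at $t=1$. Therefore $\mathcal{I}(u) \ge \mathcal{I}(u_t)$ for every $t>0$, with equality attained at $t=1$ (where trivially $u_1 = u$), which is precisely the claim $\mathcal{I}(u) = \max_{t>0}\mathcal{I}(u_t)$.

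I do not expect any genuine obstacle: this corollary is a clean specialization of the master inequality \eqref{B21} to the constraint $\mathcal{P}(u)=0$, and the sign information needed to conclude is already packaged into Lemma \ref{lem 2.1} and assumption (V3). The only thing worth remarking on in the write-up is that the argument uses $1-\theta>0$ strictly (hence the role of $\theta<1$) and that the maximum is in fact uniquely attained at $t=1$ when $\|\nabla u\|_2 \neq 0$, which is automatic for $u \in \mathcal{M} \subset H^1(\R^N)\setminus\{0\}$ in view of (F1), (F2) and the nontriviality of $u$.
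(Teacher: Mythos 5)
Your proof is correct and is precisely the argument the paper intends: the corollary is stated without proof immediately after Lemma \ref{lem 2.2}, and the intended deduction is exactly to set $\mathcal{P}(u)=0$ in \eqref{B21} and use $\mathfrak{g}(t)\ge 0$ (with equality only at $t=1$) together with $1-\theta>0$. One small aside: your appeal to (F1), (F2) to justify $\|\nabla u\|_2\ne 0$ is unnecessary; for any nonzero $u\in H^1(\R^N)$ one has $\nabla u\ne 0$ since a constant in $H^1(\R^N)$ must be zero, but this does not affect the validity of the argument.
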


 \begin{lemma}\label{lem 2.5}
   Assume that {\rm (V1)-(V3)} hold. Then there exist two constants $\gamma_1, \gamma_2>0$ such that
 \begin{equation}\label{B26}
   \gamma_1\|u\|^2\le (N-2)\|\nabla u\|_2^2+\int_{\R^N}\left[NV(x)+\nabla V(x)\cdot x\right]u^2\mathrm{d}x\le \gamma_2\|u\|^2,
     \ \ \forall \ u\in H^1(\R^N).
 \end{equation}
 \end{lemma}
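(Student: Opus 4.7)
The strategy is to obtain the upper bound from routine boundedness estimates on $V$ and $\nabla V(x)\cdot x$, and the lower bound by combining a pointwise estimate extracted from (V3) with a splitting argument on $\mathbb{R}^N = \{|x|<R\}\cup\{|x|\ge R\}$, using the Hardy inequality \eqref{B22} to trade the singular weight for $\|\nabla u\|_2^2$.

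For the upper bound, note that $V\in\mathcal{C}(\R^N,[0,\infty))$ with $V(x)\to V_\infty$ implies $V$ is bounded on $\R^N$. Moreover, $\nabla V\in\mathcal{C}(\R^N,\R^N)$ by (V3), so $\nabla V(x)\cdot x$ is continuous, vanishes at the origin, and by Lemma \ref{lem 2.0} tends to $0$ as $|x|\to\infty$, hence is bounded on $\R^N$. The upper bound with $\gamma_2=\max(N-2,\,N\|V\|_\infty+\|\nabla V(\cdot)\cdot(\cdot)\|_\infty)$ then follows immediately.

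For the lower bound, the key is to extract from (V3), i.e.\ from the inequality
\[
(\alpha+Nt^{N+\alpha})V(x)-(N+\alpha)t^N V(tx)+(t^{N+\alpha}-1)\nabla V(x)\cdot x\ge -\frac{(N-2)^2\theta\,\mathfrak{g}(t)}{4|x|^2},
\]
a pointwise bound on $NV(x)+\nabla V(x)\cdot x$. Dividing by $t^{N+\alpha}$ and letting $t\to\infty$, using that $V$ is bounded (so $t^{-\alpha}V(tx)\to 0$) and that $\mathfrak{g}(t)/t^{N+\alpha}\to N-2$, yields
\[
NV(x)+\nabla V(x)\cdot x\ge -\frac{(N-2)^3\theta}{4|x|^2}\qquad\forall\,x\in\R^N\setminus\{0\}.
\]

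The main obstacle is that this pointwise bound, coupled with the Hardy inequality, only controls $\|\nabla u\|_2^2$; to also recover $\|u\|_2^2$ and thus the full $H^1$-norm, I would split the integral at a radius $R$ large enough so that $V(x)\ge V_\infty/2$ and $|\nabla V(x)\cdot x|\le NV_\infty/4$ for $|x|\ge R$ (possible by (V1) and Lemma \ref{lem 2.0}), giving $NV(x)+\nabla V(x)\cdot x\ge NV_\infty/4$ on $\{|x|\ge R\}$. Using the pointwise lower bound on $\{|x|<R\}$ together with \eqref{B22}, one obtains
\[
(N-2)\|\nabla u\|_2^2+\int_{\R^N}[NV(x)+\nabla V(x)\cdot x]u^2\mathrm{d}x\ge (N-2)(1-\theta)\|\nabla u\|_2^2+\frac{NV_\infty}{4}\int_{|x|\ge R}u^2\mathrm{d}x.
\]
Finally, the elementary estimate $\int_{|x|<R}u^2\mathrm{d}x\le R^2\int_{\R^N}u^2/|x|^2\,\mathrm{d}x\le \frac{4R^2}{(N-2)^2}\|\nabla u\|_2^2$ together with $\int_{|x|\ge R}u^2\mathrm{d}x=\|u\|_2^2-\int_{|x|<R}u^2\mathrm{d}x$ converts the right-hand side into a multiple of $\|u\|^2$, and the choice $\gamma_1:=\min\bigl((N-2)(1-\theta)/(1+4R^2/(N-2)^2),\,NV_\infty/4\bigr)>0$ completes the proof.
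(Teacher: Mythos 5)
Your proof is correct, and it follows the same overall strategy as the paper's: extract the pointwise bound $NV(x)+\nabla V(x)\cdot x\ge -(N-2)^3\theta/(4|x|^2)$ from \eqref{V3} by sending $t\to\infty$, split $\R^N$ at a radius $R$, use the Hardy inequality near the origin to absorb the singular weight into $\|\nabla u\|_2^2$, and exploit a positive lower bound on the potential term at infinity. There are two small departures. For the far-field bound the paper stays inside (V3), substituting $t=R$ into the rearranged identity \eqref{B30} to obtain \eqref{B32}, which keeps a residual $-(N-2)^3\theta/(4|x|^2)$ alongside the positive constant; you instead invoke Lemma \ref{lem 2.0} (decay of $\nabla V(x)\cdot x$ at infinity, already established under (V1)--(V3)) together with (V1) to get the cleaner inequality $NV(x)+\nabla V(x)\cdot x\ge NV_\infty/4$ for $|x|\ge R$, which is a legitimate and slightly more economical route. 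For controlling $\int_{|x|<R}u^2$ by $\|\nabla u\|_2^2$, the paper uses H\"older plus the Sobolev inequality as in \eqref{B33}, whereas you use the elementary bound $\int_{|x|<R}u^2\le R^2\int u^2/|x|^2$ followed by Hardy; both are valid and give a positive $\gamma_1$. Your upper bound, obtained directly from the boundedness of $V$ and of $\nabla V(x)\cdot x$ (via (V1), continuity, and Lemma \ref{lem 2.0}), is also correct; the paper instead derives \eqref{B27} from (V3) at $t=0$ and uses Hardy to absorb the $1/|x|^2$ term, arriving at a similar $\gamma_2$.
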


 \begin{proof} Let $t=0$ and $t \to \infty$ in \eqref{V3}, respectively, and using (V1), (V2), one has
 \begin{equation}\label{B27}
   \nabla V(x)\cdot x\le \alpha V_{\infty}+\frac{(N-2)^2(2+\alpha)\theta}{4|x|^2},
       \ \ \ \ \forall \ x\in \R^N\setminus \{0\},
 \end{equation}
 \begin{equation}\label{B28}
   -NV_{\infty}-\frac{(N-2)^3\theta}{4|x|^2}\le -NV(x)-\frac{(N-2)^3\theta}{4|x|^2}
     \le \nabla V(x)\cdot x, \ \ \ \ \forall \ x\in \R^N\setminus \{0\}.
 \end{equation}
 By \eqref{B27}, \eqref{B28} and $V\in \mathcal{C}^1(\R^N, \R)$, there exists a constant $M_0>0$ such that
 \begin{equation}\label{B29}
   |\nabla V(x)\cdot x| \le M_0, \ \ \ \ \forall \ x\in \R^N.
 \end{equation}
 From \eqref{V3}, one has
 \begin{eqnarray}\label{B30}
    &     & NV(x)+\nabla V(x)\cdot x \nonumber\\
    & \ge & -\frac{(N-2)^3\theta}{4|x|^2}+(N+\alpha)t^{-\alpha}V(tx)\nonumber\\
    &     & -\left[\frac{(N-2)^2(2+\alpha)\theta}{4|x|^2}-\nabla V(x)\cdot x+\alpha V(x)\right]t^{-N-\alpha},
              \ \ \ \ \forall \ t > 0, \ \ x\in \R^N\setminus \{0\}. \ \ \ \
 \end{eqnarray}
 By (V1), there exists $R>0$ such that $V(x)\ge \frac{V_{\infty}}{2}$ for all $|x|\ge R$ and
 \begin{equation}\label{B31}
   \left[\frac{(N-2)^2(2+\alpha)\theta}{4}+M_0+\alpha V_{\infty}\right]R^{-N}<\frac{(N+\alpha)V_{\infty}}{4}.
 \end{equation}
 It follows from (V1), (V2), \eqref{B29}, \eqref{B30} and \eqref{B31} that
 \begin{eqnarray}\label{B32}
   NV(x)+\nabla V(x)\cdot x
    & \ge & -\frac{(N-2)^3\theta}{4|x|^2}+(N+\alpha)R^{-\alpha}V(Rx)\nonumber\\
    &     & -\left[\frac{(N-2)^2(2+\alpha)\theta}{4|x|^2}-\nabla V(x)\cdot x+\alpha V(x)\right]R^{-N-\alpha}\nonumber\\
    & \ge & -\frac{(N-2)^3\theta}{4|x|^2}+\frac{(N+\alpha)R^{-\alpha}V_{\infty}}{4},
              \ \ \ \ \forall \ |x|\ge 1.
 \end{eqnarray}
 Making use of the H\"older inequality and the Sobolev inequality, we get
 \begin{equation}\label{B33}
   \int_{|x|< 1}u^2\mathrm{d}x\le \omega_N^{(2^*-2)/2^*}\left(\int_{|x|< 1}|u|^{2^*}\mathrm{d}x\right)^{2/2^*}
     \le \omega_N^{2/N}S^{-1}\|\nabla u\|_2^2,
 \end{equation}
 where $\omega_N$ denotes the volume of the unit ball of $\R^N$. Thus it follows from \eqref{B22}, \eqref{B27}, \eqref{B28}, \eqref{B32}
 and \eqref{B33} that
 \begin{eqnarray}\label{B34}
   &     & (N-2)\|\nabla u\|_2^2+\int_{\R^N}\left[NV(x)+\nabla V(x)\cdot x\right]u^2\mathrm{d}x\nonumber\\
   & \le & [N-2+(2+\alpha)\theta]\|\nabla u\|_2^2+(N+\alpha)V_{\infty}\|u\|_2^2\nonumber\\
   & \le & [N-2+(2+\alpha)\theta+(N+\alpha)V_{\infty}]\|u\|^2:=\gamma_2\|u\|^2, \ \ \forall \ u\in H^1(\R^N)
 \end{eqnarray}
 and
 \begin{eqnarray}\label{B35}
   &     & (N-2)\|\nabla u\|_2^2+\int_{\R^N}\left[NV(x)+\nabla V(x)\cdot x\right]u^2\mathrm{d}x\nonumber\\
   &  =  & (N-2)\|\nabla u\|_2^2+\int_{|x|<1}\left[NV(x)+\nabla V(x)\cdot x\right]u^2\mathrm{d}x
             +\int_{|x|\ge 1}\left[NV(x)+\nabla V(x)\cdot x\right]u^2\mathrm{d}x\nonumber\\
   & \ge & (N-2)\|\nabla u\|_2^2-\frac{(N-2)^3\theta}{4}\int_{\R^N}\frac{u^2}{|x|^2}\mathrm{d}x
             +\frac{(N+\alpha)R^{-\alpha}V_{\infty}}{4}\int_{|x|\ge 1}u^2\mathrm{d}x\nonumber\\
   & \ge & (1-\theta)(N-2)\|\nabla u\|_2^2+\frac{(N+\alpha)R^{-\alpha}V_{\infty}}{4}\int_{|x|\ge 1}u^2\mathrm{d}x\nonumber\\
   & \ge & \frac{(1-\theta)(N-2)}{2}\|\nabla u\|_2^2+\frac{(1-\theta)(N-2)S}{2\omega_N^{2/N}}
            \int_{|x|< 1}u^2\mathrm{d}x+\frac{(N+\alpha)R^{-\alpha}V_{\infty}}{4}\int_{|x|\ge 1}u^2\mathrm{d}x\nonumber\\
   & \ge & \frac{(1-\theta)(N-2)}{2}\|\nabla u\|_2^2+\min\left\{\frac{(1-\theta)(N-2)S}{2\omega_N^{2/N}},
            \frac{(N+\alpha)R^{-\alpha}V_{\infty}}{4}\right\}\|u\|_2^2\nonumber\\
   & \ge & \min\left\{\frac{(1-\theta)(N-2)}{2}, \frac{(1-\theta)(N-2)S}{2\omega_N^{2/N}},
            \frac{(N+\alpha)R^{-\alpha}V_{\infty}}{4}\right\}\|u\|^2\nonumber\\
   & :=  & \gamma_1\|u\|^2, \ \ \forall \ u\in H^1(\R^N).
 \end{eqnarray}
 Both \eqref{B34} and \eqref{B35} imply that \eqref{B26} holds.
 \end{proof}

 \par
   To show $\mathcal{M}\ne \emptyset$, we define a set $\Lambda$ as follows:
 \begin{equation}\label{La}
   \Lambda：=\left\{u\in H^1(\R^N) : \int_{\R^N}(I_{\alpha}*F(u))F(u)\mathrm{d}x > 0\right\}.
 \end{equation}

 \begin{lemma}\label{lem 2.6}
   Assume that {\rm (V1)-(V3)} and {\rm (F1)-(F3)} hold. Then $\Lambda\ne\emptyset$ and
 \begin{equation}\label{La1}
   \left\{u\in H^1(\R^N)\setminus \{0\} : \mathcal{P}^{\infty}(u)\le 0 \ \mbox{or} \ \mathcal{P}(u)\le 0\right\}\subset \Lambda.
 \end{equation}
 \end{lemma}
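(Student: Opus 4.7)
The plan is to prove the two conclusions separately. The inclusion \eqref{La1} will be a direct consequence of the definitions of $\mathcal{P}$ and $\mathcal{P}^{\infty}$ together with Lemma \ref{lem 2.5}, and $\Lambda\ne\emptyset$ will follow from (F3) combined with the standard positivity of the Riesz bilinear form. Both arguments are essentially computational; no step requires a new idea.

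For $\Lambda\ne\emptyset$, I would argue as follows. By (F3) there exists $s_0>0$ with $F(s_0)\ne 0$. Choose any $\phi\in\mathcal{C}_c^{\infty}(\R^N)$ with $\phi(0)=s_0$. Since $\phi$ is bounded and compactly supported, $F(\phi)$ lies in every $L^p(\R^N)$, in particular in $L^{2N/(N+\alpha)}(\R^N)$; and by continuity of $F$, $F(\phi)\not\equiv 0$ on a neighborhood of the origin. The Riesz potential satisfies $\widehat{I_{\alpha}}(\xi)=c_{N,\alpha}|\xi|^{-\alpha}$ with $c_{N,\alpha}>0$, so Plancherel's identity yields
\[
\int_{\R^N}(I_{\alpha}*F(\phi))F(\phi)\,\mathrm{d}x=c_{N,\alpha}\int_{\R^N}|\xi|^{-\alpha}|\widehat{F(\phi)}(\xi)|^2\,\mathrm{d}\xi>0,
\]
which places $\phi$ in $\Lambda$.

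For the inclusion \eqref{La1}, fix $u\in H^1(\R^N)\setminus\{0\}$. If $\mathcal{P}^{\infty}(u)\le 0$, then from \eqref{Pi} and the facts $N\ge 3$, $V_{\infty}>0$, $\|u\|_2>0$,
\[
\frac{N+\alpha}{2}\int_{\R^N}(I_{\alpha}*F(u))F(u)\,\mathrm{d}x\ge\frac{N-2}{2}\|\nabla u\|_2^2+\frac{NV_{\infty}}{2}\|u\|_2^2>0,
\]
so $u\in\Lambda$. If instead $\mathcal{P}(u)\le 0$, then from \eqref{Jv} together with the lower bound of Lemma \ref{lem 2.5},
\[
\frac{N+\alpha}{2}\int_{\R^N}(I_{\alpha}*F(u))F(u)\,\mathrm{d}x\ge\frac{1}{2}\left[(N-2)\|\nabla u\|_2^2+\int_{\R^N}[NV(x)+\nabla V(x)\cdot x]u^2\,\mathrm{d}x\right]\ge\frac{\gamma_1}{2}\|u\|^2>0,
\]
so again $u\in\Lambda$.

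No step presents a genuine obstacle: the positivity of the Riesz bilinear form is classical and is exploited throughout \cite{MS}, while Lemma \ref{lem 2.5} supplies exactly the coercivity needed to absorb the $V$-dependent quadratic term. Note in particular that the assumptions (V1)-(V3) enter only through Lemma \ref{lem 2.5} in the $\mathcal{P}$ case; the $\mathcal{P}^{\infty}$ case relies only on $V_{\infty}>0$ and $N\ge 3$.
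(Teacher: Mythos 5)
Your proof is correct. For the inclusion \eqref{La1}, your two cases match the paper's: the $\mathcal{P}^{\infty}(u)\le 0$ case is immediate from \eqref{Pi} (or \eqref{JlL} with $\lambda=1$), exactly as in the paper; for the $\mathcal{P}(u)\le 0$ case the paper redoes the Hardy inequality \eqref{B22} together with the lower bound \eqref{B28} on $\nabla V(x)\cdot x$, while you invoke the coercivity estimate \eqref{B26} of Lemma~\ref{lem 2.5}, which is itself proved from the same two ingredients — so this is the same estimate, merely packaged through an already-established lemma. The only place where you genuinely diverge is $\Lambda\ne\emptyset$: the paper simply defers to the construction in \cite[Proof of Claim~1, Prop.~2.1]{MS}, which builds an explicit truncated step-like profile taking the value $s_0$ on a large ball and estimates the Riesz energy directly, whereas you observe that for any bump $\phi\in\mathcal{C}_c^{\infty}(\R^N)$ with $\phi(0)=s_0$, the function $F(\phi)$ is bounded, compactly supported (since $F(0)=0$), and $\not\equiv 0$ by continuity, so the positive-definiteness of the Riesz kernel on the Fourier side gives $\int_{\R^N}(I_{\alpha}*F(\phi))F(\phi)\,\mathrm{d}x>0$. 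Your Fourier argument is slightly more transparent and avoids the scaling computation, at the cost of invoking the Parseval identity for the Riesz kernel (which is legitimate here since $F(\phi)\in L^1\cap L^2$, so $\widehat{F(\phi)}$ is bounded and continuous, making $|\xi|^{-\alpha}|\widehat{F(\phi)}|^2$ integrable both near $\xi=0$, as $\alpha<N$, and at infinity). Both routes are standard and correct; the conclusion is the same.
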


 \begin{proof}  In view of the proof of \cite[The proof of Claim 1 in Proposition 2.1]{MS}, (F3) implies $\Lambda\ne\emptyset$.
 Next, we have two cases to distinguish:

 \vskip2mm
 \par
  1). $u\in H^1(\R^N)\setminus \{0\}$ and $\mathcal{P}^{\infty}(u)\le 0$, then \eqref{Pi} implies $u\in \Lambda$.

 \par
  2). $u\in H^1(\R^N)\setminus \{0\}$ and $\mathcal{P}(u)\le 0$, then it follows from \eqref{Jv}, \eqref{B22} and \eqref{B28} that
 \begin{eqnarray*}
   &     &  -\frac{N+\alpha}{2}\int_{\R^N}(I_{\alpha}*F(u))F(u)\mathrm{d}x\\
   &  =  & \mathcal{P}(u)-\frac{N-2}{2}\|\nabla u\|_2^2-\frac{1}{2}\int_{{\R}^N}\left[NV(x)
             +\nabla V(x)\cdot x\right]u^2\mathrm{d}x\\
   & \le & -\frac{N-2}{2}\|\nabla u\|_2^2+\frac{(N-2)^3\theta}{8}\int_{{\R}^N}\frac{u^2}{|x|^2}\mathrm{d}x\\
   & \le & -\frac{(1-\theta)(N-2)}{2}\|\nabla u\|_2^2 < 0,
 \end{eqnarray*}
 which implies $u\in \Lambda$.
 \end{proof}

 \begin{lemma}\label{lem 2.7}
 Assume that {\rm (V1)-(V3)}  and {\rm (F1)-(F3)}  hold. Then for any
 $u\in \Lambda$, there exists a unique $t_u>0$ such that $u_{t_u}\in \mathcal{M}$.
 \end{lemma}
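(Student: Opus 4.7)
The strategy is to view $\mathcal{P}(u_t)$ as a function of $t\in(0,\infty)$ and show it changes sign exactly once, using a scaling analysis for existence and the sharp inequality in Lemma~\ref{lem 2.2} for uniqueness. Fix $u\in\Lambda$; then $u\not\equiv 0$, so $\|\nabla u\|_2^2>0$, and by definition of $\Lambda$ we have $\int_{\R^N}(I_\alpha*F(u))F(u)\,\mathrm{d}x>0$.

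\textbf{Step 1: Existence via IVT.} Using the scaling identities $\|\nabla u_t\|_2^2=t^{N-2}\|\nabla u\|_2^2$, $\int V(x)u_t^2\,\mathrm{d}x=t^N\int V(tx)u^2\,\mathrm{d}x$, and $\int(I_\alpha*F(u_t))F(u_t)\,\mathrm{d}x=t^{N+\alpha}\int(I_\alpha*F(u))F(u)\,\mathrm{d}x$, I would compute
\begin{equation*}
\mathcal{P}(u_t)=t^{N-2}\left\{\frac{N-2}{2}\|\nabla u\|_2^2+\frac{t^2}{2}\int_{\R^N}\left[NV(tx)+\nabla V(tx)\cdot(tx)\right]u^2\,\mathrm{d}x-\frac{(N+\alpha)t^{\alpha+2}}{2}\int_{\R^N}(I_\alpha*F(u))F(u)\,\mathrm{d}x\right\}.
\end{equation*}
Since $V$ is bounded by $V_\infty$ (from (V1), (V2)) and $|\nabla V(x)\cdot x|\le M_0$ by \eqref{B29}, the bracketed second term is bounded by $CNt^2\|u\|_2^2\to 0$ as $t\to 0^+$ via dominated convergence. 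Hence $t^{2-N}\mathcal{P}(u_t)\to\tfrac{N-2}{2}\|\nabla u\|_2^2>0$ as $t\to 0^+$, so $\mathcal{P}(u_t)>0$ for small $t$. As $t\to\infty$, the last term dominates (since $u\in\Lambda$ ensures the coefficient is strictly positive) and drives $t^{2-N}\mathcal{P}(u_t)\to-\infty$. By continuity of $t\mapsto\mathcal{P}(u_t)$, there exists $t_u>0$ with $\mathcal{P}(u_{t_u})=0$, i.e.\ $u_{t_u}\in\mathcal{M}$.

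\textbf{Step 2: Uniqueness via Lemma~\ref{lem 2.2}.} Suppose two values $0<t_1<t_2$ both give $u_{t_1},u_{t_2}\in\mathcal{M}$. Applying Lemma~\ref{lem 2.2} to the function $u_{t_1}$ with scale $s=t_2/t_1>1$, and noting $(u_{t_1})_s=u_{st_1}=u_{t_2}$ and $\mathcal{P}(u_{t_1})=0$, I obtain
\begin{equation*}
\mathcal{I}(u_{t_1})\ge\mathcal{I}(u_{t_2})+\frac{(1-\theta)\mathfrak{g}(t_2/t_1)}{2(N+\alpha)}\|\nabla u_{t_1}\|_2^2.
\end{equation*}
Symmetrically, applying Lemma~\ref{lem 2.2} to $u_{t_2}$ with scale $t_1/t_2<1$ and using $\mathcal{P}(u_{t_2})=0$ yields
\begin{equation*}
\mathcal{I}(u_{t_2})\ge\mathcal{I}(u_{t_1})+\frac{(1-\theta)\mathfrak{g}(t_1/t_2)}{2(N+\alpha)}\|\nabla u_{t_2}\|_2^2.
\end{equation*}
Since $\mathfrak{g}(s)>0$ for $s\ne 1$ by Lemma~\ref{lem 2.1}, $\theta<1$, and $\|\nabla u_{t_i}\|_2>0$, both inequalities are strict, producing the contradiction $\mathcal{I}(u_{t_1})>\mathcal{I}(u_{t_2})>\mathcal{I}(u_{t_1})$. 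Hence $t_u$ is unique.

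\textbf{Expected obstacle.} Uniqueness is essentially automatic once Lemma~\ref{lem 2.2} is available, so the only delicate point is justifying the limit $t^2\int[NV(tx)+\nabla V(tx)\cdot(tx)]u^2\,\mathrm{d}x\to 0$ as $t\to 0^+$. This requires the global bound $|\nabla V(x)\cdot x|\le M_0$ supplied by \eqref{B29} (itself a consequence of (V1)--(V3)); without such a bound, one could not separate the $t^{N-2}$ Sobolev term from the potential contribution in the small-$t$ asymptotics.
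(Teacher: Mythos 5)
Your proof is correct and follows essentially the same route as the paper: a scaling/sign-change argument for existence and the sharp inequality of Lemma~\ref{lem 2.2} (together with strict positivity of $\mathfrak{g}$ away from $1$) for uniqueness, the latter being word-for-word the paper's argument. The one cosmetic difference is in the existence step: the paper sets $\zeta(t):=\mathcal{I}(u_t)$, observes via \eqref{B23} that $\zeta(t)\to0$ as $t\to0^+$, $\zeta>0$ for small $t$, and $\zeta<0$ for large $t$ (which only needs $0\le V\le V_\infty$ from (V1)--(V2), not the gradient bound), concludes the supremum is attained at an interior point $t_u$, and then uses the identity $\mathcal{P}(u_t)=t\,\zeta'(t)$ to get $u_{t_u}\in\mathcal{M}$; you instead apply the intermediate value theorem directly to $t\mapsto\mathcal{P}(u_t)$, which requires the uniform bound $|\nabla V(x)\cdot x|\le M_0$ from \eqref{B29} to control the small-$t$ limit. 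Both are valid and equivalent; the paper's variant is marginally lighter on hypotheses in the existence step, while yours is slightly more direct in that it avoids passing through the maximization of $\zeta$.
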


 \begin{proof} Let $u\in \Lambda$ be fixed and define a function $\zeta(t):=\mathcal{I}(u_t)$
 on $(0, \infty)$. Clearly, by \eqref{Jv} and \eqref{B23}, we have
 \begin{eqnarray}\label{B41}
  \zeta'(t)=0
    &     & \Leftrightarrow \ \ \frac{N-2}{2}t^{N-2}\|\nabla u\|_2^2+\frac{t^N}{2}\int_{{\R}^N}[NV(tx)
              +\nabla V(tx)\cdot (tx)]u^2\mathrm{d}x\nonumber\\
    &     & \ \ \ \ \ \ \ \ \ \
              -\frac{(N+\alpha)t^{N+\alpha}}{2}\int_{\R^N}(I_{\alpha}*F(u))F(u)\mathrm{d}x=0\nonumber\\
    &     & \ \Leftrightarrow  \ \ \mathcal{P}(u_t)=0 \ \ \Leftrightarrow  \ \ u_t\in \mathcal{M}.
 \end{eqnarray}
 It is easy to verify, using (V1), (V2), (F1), \eqref{B23} and the definition of $\Lambda$, that $\lim_{t\to 0}\zeta(t)=0$, $\zeta(t)>0$
 for $t>0$ small and $\zeta(t)<0$ for $t$ large. Therefore $\max_{t\in (0, \infty)}\zeta(t)$ is achieved at $t_u>0$ so that
 $\zeta'(t_u)=0$ and $u_{t_u}\in \mathcal{M}$.

 \par
    Next we claim that $t_u$ is unique for any $u\in \Lambda$. In fact, for any given $u\in \Lambda$,
 let $t_1, t_2>0$ such that $u_{t_1}, u_{t_2} \in \mathcal{M}$. Then $\mathcal{P}\left(u_{t_1}\right)=\mathcal{P}\left(u_{t_2}\right)=0$.
 Jointly with \eqref{B21}, we have
 \begin{eqnarray}\label{BB41}
   \mathcal{I}\left(u_{t_1}\right)
   & \ge & \mathcal{I}\left(u_{t_2}\right)+\frac{t_1^{N+\alpha}-t_2^{N+\alpha}}{(N+\alpha)t_1^{N+\alpha}}\mathcal{P}\left(u_{t_1}\right)
             +\frac{(1-\theta)\mathfrak{g}(t_2/t_1)}{2(N+\alpha)}\|\nabla u_{t_1}\|_2^2\nonumber\\
   &  =  & \mathcal{I}\left(u_{t_2}\right)+\frac{(1-\theta)t_1^{N-2}\mathfrak{g}(t_2/t_1)}{2(N+\alpha)}\|\nabla u\|_2^2
 \end{eqnarray}
 and
 \begin{eqnarray}\label{B42}
   \mathcal{I}\left(u_{t_2}\right)
   & \ge & \mathcal{I}\left(u_{t_1}\right)+\frac{t_2^{N+\alpha}-t_1^{N+\alpha}}{(N+\alpha)t_2^{N+\alpha}}\mathcal{P}\left(u_{t_2}\right)
            +\frac{(1-\theta)\mathfrak{g}(t_1/t_2)}{2(N+\alpha)}\|\nabla u_{t_2}\|_2^2\nonumber\\
   &  =  & \mathcal{I}\left(u_{t_1}\right)+\frac{(1-\theta)t_2^{N-2}\mathfrak{g}(t_1/t_2)}{2(N+\alpha)}\|\nabla u\|_2^2.
 \end{eqnarray}
 \eqref{B10}, \eqref{BB41} and \eqref{B42} imply $t_1=t_2$. Therefore, $t_u> 0$ is unique for any $u\in \Lambda$.
 \end{proof}

 \begin{corollary}\label{cor2.8}
 Assume that {\rm (F1)-(F3)}  hold. Then for any
 $u\in \Lambda$, there exists a unique $t_u>0$ such that $u_{t_u}\in \mathcal{M}^{\infty}$.
 \end{corollary}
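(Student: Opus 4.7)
The plan is to mirror the proof of Lemma \ref{lem 2.7} in the autonomous setting, exploiting the decomposition identity Corollary \ref{cor2.3} in place of the inequality \eqref{B21}. Since $V\equiv V_{\infty}$ trivially satisfies (V1)--(V3) with $\theta=0$, one could in principle just quote Lemma \ref{lem 2.7}, but it is cleaner to repeat the short argument directly using $\mathcal{P}^{\infty}$ and $\mathcal{I}^{\infty}$ since we have the exact equality \eqref{B25} rather than an inequality.

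Fix $u\in\Lambda$ and define $\zeta(t):=\mathcal{I}^{\infty}(u_t)$ on $(0,\infty)$. Writing out $\zeta(t)$ explicitly gives
\[
\zeta(t)=\frac{t^{N-2}}{2}\|\nabla u\|_2^2+\frac{t^N V_{\infty}}{2}\|u\|_2^2-\frac{t^{N+\alpha}}{2}\int_{\R^N}(I_{\alpha}*F(u))F(u)\mathrm{d}x.
\]
From (F1), (F2) and \eqref{Ru} one sees $\lim_{t\to 0^+}\zeta(t)=0$ and $\zeta(t)>0$ for $t>0$ small (since the quadratic-in-$t^{N-2}$ gradient term dominates). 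Because $u\in\Lambda$, the coefficient of $-t^{N+\alpha}$ is strictly positive, so $\zeta(t)\to-\infty$ as $t\to\infty$. Hence $\zeta$ attains its maximum at some $t_u>0$, and a direct computation shows that $\zeta'(t)=0$ is equivalent to $\mathcal{P}^{\infty}(u_t)=0$, i.e.\ to $u_{t_u}\in\mathcal{M}^{\infty}$. This yields existence.

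For uniqueness, suppose $t_1,t_2>0$ with $u_{t_1},u_{t_2}\in\mathcal{M}^{\infty}$. Applying the identity \eqref{B25} to $u_{t_1}$ with scaling parameter $t_2/t_1$, and symmetrically to $u_{t_2}$ with parameter $t_1/t_2$, and using $\mathcal{P}^{\infty}(u_{t_1})=\mathcal{P}^{\infty}(u_{t_2})=0$, one obtains
\[
\mathcal{I}^{\infty}(u_{t_1})=\mathcal{I}^{\infty}(u_{t_2})+\frac{\mathfrak{g}(t_2/t_1)\|\nabla u_{t_1}\|_2^2+V_{\infty}\mathfrak{h}(t_2/t_1)\|u_{t_1}\|_2^2}{2(N+\alpha)},
\]
and the same with the roles of $t_1,t_2$ interchanged. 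Adding the two identities forces both $\mathfrak{g}(t_2/t_1)=0$ and $\mathfrak{h}(t_2/t_1)=0$; but by \eqref{B10}--\eqref{B20} this only happens at the argument $1$, giving $t_1=t_2$.

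There is no real obstacle here: the only step to be careful with is verifying that $\zeta$ indeed goes to $-\infty$ (which requires $u\in\Lambda$, not merely $u\neq0$) and that the asymptotic $\zeta(t)>0$ near $0$ follows from (F2) combined with the Hardy--Littlewood--Sobolev estimate \eqref{Ru}; the uniqueness then falls out of the strict positivity of $\mathfrak{g}$ and $\mathfrak{h}$ away from $t=1$ established in Lemma \ref{lem 2.1}.
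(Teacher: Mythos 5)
Your proof is correct and, modulo using the exact decomposition \eqref{B25} from Corollary \ref{cor2.3} in place of the inequality \eqref{B21}, it is the same argument the paper uses for Lemma \ref{lem 2.7}; the paper itself gives no separate proof of Corollary \ref{cor2.8}, relying on the blanket remark at the start of Section 2 that $V\equiv V_{\infty}$ satisfies (V1)--(V3) with $\theta=0$. Your observation that the autonomous equality gives a mild bonus (one gets to use the $\mathfrak{h}(\cdot)\|u\|_2^2$ term as well as the $\mathfrak{g}(\cdot)\|\nabla u\|_2^2$ term, so uniqueness can be read off from either) is accurate, and the existence part — scaling gives $\zeta(t)>0$ for small $t$, $\zeta(t)\to-\infty$ for large $t$ precisely because $u\in\Lambda$, and $t\zeta'(t)=\mathcal{P}^{\infty}(u_t)$ — is exactly how the paper argues in Lemma \ref{lem 2.7}.
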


    From Corollary \ref{cor2.4}, Lemmas \ref{lem 2.6}, \ref{lem 2.7} and Corollary \ref{cor2.8}, we have $\mathcal{M}\ne \emptyset$,
 $\mathcal{M}^{\infty}\ne \emptyset$ and the following lemma.

 \begin{lemma}\label{lem 2.9}
   Assume that {\rm (V1)-(V3)}  and {\rm (F1)-(F3) }  hold.  Then
 $$
   \inf_{u\in \mathcal{M}}\mathcal{I}(u)
   :=m=\inf_{u\in \Lambda}\max_{t > 0}\mathcal{I}(u_t).
 $$
 \end{lemma}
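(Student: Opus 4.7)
The plan is to prove the two inequalities separately using the structural results already established. The key ingredients are Lemma 2.7 (existence and uniqueness of $t_u$ so that $u_{t_u} \in \mathcal{M}$ for any $u \in \Lambda$), Corollary 2.4 (on $\mathcal{M}$ the functional $\mathcal{I}$ coincides with its fiber-maximum), and Lemma 2.6 (which gives $\mathcal{M} \subset \Lambda$).

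\smallskip

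First I would show $m \ge \inf_{u \in \Lambda} \max_{t>0} \mathcal{I}(u_t)$. Take any $v \in \mathcal{M}$. Since $\mathcal{P}(v)=0$, Lemma 2.6 yields $v \in \Lambda$, so $v$ is an admissible test function in the right-hand side. Corollary 2.4 then gives
\begin{equation*}
\mathcal{I}(v) \;=\; \max_{t>0}\mathcal{I}(v_t) \;\ge\; \inf_{u\in\Lambda}\max_{t>0}\mathcal{I}(u_t).
\end{equation*}
Taking the infimum over $v\in \mathcal{M}$ produces the desired inequality.

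\smallskip

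For the reverse inequality, take an arbitrary $u\in \Lambda$. By Lemma 2.7 there is a unique $t_u>0$ with $u_{t_u}\in \mathcal{M}$, and the proof of that lemma shows moreover that the function $\zeta(t):=\mathcal{I}(u_t)$ attains its global maximum on $(0,\infty)$ precisely at $t=t_u$. Hence
\begin{equation*}
\max_{t>0}\mathcal{I}(u_t) \;=\; \mathcal{I}(u_{t_u}) \;\ge\; \inf_{v\in\mathcal{M}}\mathcal{I}(v)\;=\;m,
\end{equation*}
and taking the infimum over $u\in\Lambda$ yields $\inf_{u\in\Lambda}\max_{t>0}\mathcal{I}(u_t)\ge m$. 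Combining the two bounds gives the claimed identity.

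\smallskip

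There is essentially no hard step here: all the analytic work (the scaling inequality of Lemma 2.2, the monotonicity/sign information on $\zeta$, and the inclusion $\mathcal{M}\subset \Lambda$) has been done in the preceding lemmas, so the proof of Lemma 2.9 is just a short bookkeeping argument tying those facts together. The only point that requires a moment's attention is to confirm that the $t_u$ produced in Lemma 2.7 is genuinely the \emph{global} maximizer of $\zeta$ (not merely a critical point on $\mathcal{M}$), but this is immediate from the boundary behavior $\zeta(0^+)=0$, $\zeta(t)>0$ for small $t$, and $\zeta(t)\to -\infty$ as $t\to\infty$, together with the uniqueness of the critical point $t_u$.
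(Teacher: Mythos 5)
Your proof is correct and uses exactly the ingredients the paper cites (Lemma 2.6, Lemma 2.7, Corollary 2.4); the paper does not spell out the two-inequality bookkeeping but clearly intends this same argument. The observation that $t_u$ from Lemma 2.7 is the global maximizer of $\zeta$ is already established in that lemma's proof, so your verification is sound.
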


 \par
  The following lemma is a known result which can be proved by a standard argument(see \cite{TC1}).

 \begin{lemma}\label{lem 2.10}
  Assume that {\rm (V1), (F1)}  and {\rm (F2) } hold. If $u_n\rightharpoonup \bar{u}$ in $H^1(\R^N)$, then
 \begin{equation}\label{F60}
   \mathcal{I}(u_n)=\mathcal{I}(\bar{u})+\mathcal{I}(u_n-\bar{u})+o(1)
 \end{equation}
 and
 \begin{equation}\label{F63}
   \mathcal{P}(u_n)=\mathcal{P}(\bar{u})+\mathcal{P}(u_n-\bar{u})+o(1).
 \end{equation}
 \end{lemma}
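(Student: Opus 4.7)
The plan is to decompose each term in $\mathcal{I}$ and $\mathcal{P}$ separately and verify the Brezis--Lieb type splitting for each. Passing to a subsequence, the assumption $u_n\rightharpoonup \bar u$ in $H^1(\R^N)$ gives $u_n\to \bar u$ a.e.\ on $\R^N$ and $u_n\to \bar u$ strongly in $L^s_{\mathrm{loc}}(\R^N)$ for every $s\in [1,2^*)$, and $\{u_n\}$ is bounded in $H^1(\R^N)$.

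For the local pieces the classical Brezis--Lieb lemma yields
$\|\nabla u_n\|_2^2=\|\nabla\bar u\|_2^2+\|\nabla(u_n-\bar u)\|_2^2+o(1)$
and
$\|u_n\|_2^2=\|\bar u\|_2^2+\|u_n-\bar u\|_2^2+o(1)$.
For the weighted quadratic integrals $\int V(x)u_n^2\,\mathrm{d}x$ and $\int [NV(x)+\nabla V(x)\cdot x]u_n^2\,\mathrm{d}x$ I would note that, by (V1) together with Lemma \ref{lem 2.0} (which gives $\nabla V(x)\cdot x\to 0$ at infinity), both weights are continuous and bounded; a routine Brezis--Lieb argument then gives the corresponding splittings, because for any bounded continuous weight $w$ one has $\int w\,u_n^2=\int w\,\bar u^2+\int w(u_n-\bar u)^2+o(1)$ (verified by expanding $u_n^2=\bar u^2+(u_n-\bar u)^2+2\bar u(u_n-\bar u)$ and using weak $L^2$-convergence of $u_n-\bar u$ to $0$ against the fixed function $w\bar u\in L^2$).

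The main work is the nonlocal Choquard term $D(u):=\int (I_\alpha*F(u))F(u)\,\mathrm{d}x$. My plan is a three-step argument. First, by (F1), (F2), and the growth bound $|F(t)|^{2N/(N+\alpha)}\le C(|t|^2+|t|^{2^*})$, together with $u_n\to\bar u$ a.e.\ and the uniform $H^1$ bound, a Vitali/Brezis--Lieb type argument (equi-integrability at both $0$ and $\infty$ delivered by (F2)) yields the nonlinear splitting
\[
 F(u_n)-F(\bar u)-F(u_n-\bar u)\longrightarrow 0 \quad\text{strongly in }L^{2N/(N+\alpha)}(\R^N).
\]
Second, the Hardy--Littlewood--Sobolev inequality promotes this to
$I_\alpha*\bigl[F(u_n)-F(\bar u)-F(u_n-\bar u)\bigr]\to 0$ strongly in $L^{2N/(N-\alpha)}(\R^N)$. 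Third, since $u_n-\bar u\rightharpoonup 0$ in $H^1$, one has $F(u_n-\bar u)\rightharpoonup 0$ in $L^{2N/(N+\alpha)}(\R^N)$, so any pairing against a fixed element of $L^{2N/(N-\alpha)}(\R^N)$ (in particular against $I_\alpha*F(\bar u)$) vanishes in the limit. Expanding
\[
 D(u_n)-D(\bar u)-D(u_n-\bar u)
\]
in terms of $F(\bar u)+F(u_n-\bar u)+r_n$ where $r_n\to 0$ in $L^{2N/(N+\alpha)}$, each of the mixed summands tends to $0$ by one of the three facts above, proving the Brezis--Lieb decomposition for $D$. Combining the local and nonlocal decompositions yields \eqref{F60} and \eqref{F63} after accounting for the explicit coefficients in $\mathcal{I}$ and $\mathcal{P}$.

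I expect Step 1 in the nonlocal argument (the $L^{2N/(N+\alpha)}$ splitting of $F$) to be the principal technical point, since $F$ is not a pure power and the growth exponents at $0$ and $\infty$ differ; the standard proof goes through once one separates a small-$|u|$ region and a large-$|u|$ region and applies (F2) together with (F1). All remaining estimates are routine applications of HLS, the Sobolev embedding and weak convergence.
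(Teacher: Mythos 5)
Your plan is correct and is the standard Brezis--Lieb argument that the paper is pointing to when it says ``see \cite{TC1}'': split $\mathcal{I}$ and $\mathcal{P}$ into quadratic local pieces (handled by the classical Brezis--Lieb lemma together with weak $L^2$ convergence of $u_n-\bar u$ against the fixed function $w\bar u$ for a bounded weight $w$) and the nonlocal Choquard term $D(u)=\int(I_\alpha*F(u))F(u)$, for which one first establishes the Brezis--Lieb splitting of $F$ in $L^{2N/(N+\alpha)}$, then transfers it through the Hardy--Littlewood--Sobolev inequality, and kills the cross terms by the weak $L^{2N/(N+\alpha)}$ convergence $F(u_n-\bar u)\rightharpoonup 0$. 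All of this matches the intended proof, and your expansion of $D$ is complete and accounts for every mixed term.

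One remark: you invoke Lemma \ref{lem 2.0} (hence implicitly (V2) and (V3)) to get boundedness of $\nabla V(x)\cdot x$, whereas Lemma \ref{lem 2.10} formally lists only (V1), (F1), (F2). This is not a flaw in your argument but a loose end in the paper's statement, since $\mathcal{P}$ already involves $\nabla V(x)\cdot x$, whose boundedness the paper only establishes under (V1)--(V3) (see \eqref{B29}); in every place the lemma is used (e.g.\ Lemma \ref{lem 3.2}) those conditions are available. You could equally well phrase the needed hypothesis intrinsically as ``$V\in\mathcal{C}^1$ with $NV+\nabla V\cdot x$ bounded''. The only technical point you leave at the level of a sketch is the non-power Brezis--Lieb splitting of $F$; that is indeed standard (a Vitali-type argument using (F2) at $0$ and $\infty$), and you correctly flag it as the main step.
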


 \begin{lemma}\label{lem 2.11}
  Assume that {\rm (V1)-(V3)}  and {\rm (F1)-(F3)} hold. Then
 \begin{enumerate}[{\rm(i)}]
  \item there exists $\rho_0>0$ such that $\|u\|\ge \rho_0, \ \forall \ u\in \mathcal{M}$;
  \item $m=\inf_{u\in \mathcal{M}}\mathcal{I}(u)>0$.
 \end{enumerate}
 \end{lemma}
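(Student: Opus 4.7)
The plan proceeds in two steps, in the natural order (i) then (ii), with (ii) feeding on (i) together with Lemma~\ref{lem 2.2}.

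For (i), I would start from the Pohoz\u aev constraint $\mathcal{P}(u)=0$ rewritten as
$$(N-2)\|\nabla u\|_2^2+\int_{\R^N}[NV(x)+\nabla V(x)\cdot x]u^2\mathrm{d}x=(N+\alpha)\int_{\R^N}(I_{\alpha}*F(u))F(u)\mathrm{d}x.$$
Lemma~\ref{lem 2.5} bounds the left-hand side from below by $\gamma_1\|u\|^2$, while \eqref{Ru} combined with the Sobolev embedding $H^1(\R^N)\hookrightarrow L^q(\R^N)$ for $q\in[2,2^*]$ bounds the right-hand side from above by $C(\|u\|^{2(N+\alpha)/N}+\|u\|^{2(N+\alpha)/(N-2)}+\|u\|^{(N+\alpha)p/N})$. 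Since $\alpha>0$ and $p\in(2,2^*)$, all three exponents strictly exceed 2, so for $\|u\|$ small enough the inequality is violated. This yields an absolute $\rho_0>0$, depending only on the constants in (V1)-(V3), (F1)-(F2), with $\|u\|\ge\rho_0$ for every $u\in\mathcal{M}$.

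For (ii), the main tool is Lemma~\ref{lem 2.2}. I take $u\in\mathcal{M}$, evaluate \eqref{B21} at arbitrary $t>0$, and pass to $t\to 0^+$. On the right-hand side, the middle term is killed by $\mathcal{P}(u)=0$; the quantity $\mathcal{I}(u_t)$ tends to $0$ thanks to (V2), (F1)-(F2) and dominated convergence applied to the expression \eqref{B23}; and $\mathfrak{g}(t)\to\mathfrak{g}(0)=2+\alpha$. This yields the pointwise bound
$$\mathcal{I}(u)\ \ge\ \frac{(1-\theta)(2+\alpha)}{2(N+\alpha)}\|\nabla u\|_2^2,\qquad u\in\mathcal{M}.$$
In particular $\mathcal{I}(u)\ge 0$ on $\mathcal{M}$, so $m\ge 0$.

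To upgrade $m\ge 0$ to $m>0$, it suffices to show $\inf_{\mathcal{M}}\|\nabla u\|_2>0$. Combining $\mathcal{P}(u)=0$ with the Hardy inequality \eqref{B22} and the lower bound \eqref{B28}, I would obtain
$$\frac{(1-\theta)(N-2)}{2}\|\nabla u\|_2^2\ \le\ \frac{N+\alpha}{2}\int_{\R^N}(I_{\alpha}*F(u))F(u)\mathrm{d}x,$$
and then bootstrap: apply \eqref{Ru} with Sobolev and Gagliardo-Nirenberg to bound the right-hand side, using the lower bound $\|u\|\ge\rho_0$ from (i) to keep the $L^2$ contribution under control, and conclude $\|\nabla u\|_2\ge\rho_0'>0$. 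The hard part I expect is exactly this final step: the HLS term $\int(I_{\alpha}*F(u))F(u)\mathrm{d}x$ is naturally estimated by a combination of $\|u\|_2$ and $\|\nabla u\|_2$, so one must exclude a hypothetical sequence $u_n\in\mathcal{M}$ with $\|\nabla u_n\|_2\to 0$ and $\|u_n\|_2$ possibly unbounded. Coupling the energy relation $\int Vu_n^2-\int(I_{\alpha}*F(u_n))F(u_n)\mathrm{d}x=2\mathcal{I}(u_n)-\|\nabla u_n\|_2^2$ with the coercivity of Lemma~\ref{lem 2.5} and the $\rho_0$-bound from (i) should close the argument by contradiction, giving $m\ge\frac{(1-\theta)(2+\alpha)}{2(N+\alpha)}(\rho_0')^2>0$.
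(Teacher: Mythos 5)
Your part (i) is exactly the paper's argument, and your derivation of the pointwise bound
\[
\mathcal{I}(u)\ \ge\ \frac{(1-\theta)(2+\alpha)}{2(N+\alpha)}\|\nabla u\|_2^2,\qquad u\in\mathcal{M},
\]
from Lemma~\ref{lem 2.2} with $t\to 0^+$ and $\mathcal{P}(u)=0$ is correct. The gap is in what you propose to do with it. You reduce $m>0$ to the claim $\inf_{\mathcal{M}}\|\nabla u\|_2>0$, but the bootstrap you sketch cannot establish that claim, and the paper neither proves nor uses it. Starting from $\mathcal{P}(u)=0$, Hardy and \eqref{B28} give
\[
\frac{(1-\theta)(N-2)}{2}\|\nabla u\|_2^2\ \le\ \frac{N+\alpha}{2}\int_{\R^N}(I_{\alpha}*F(u))F(u)\,\mathrm{d}x
\ \le\ C\bigl(\|u\|_2^{2(N+\alpha)/N}+\|\nabla u\|_2^{2(N+\alpha)/(N-2)}\bigr),
\]
and here the $\rho_0$-bound from (i) works \emph{against} you, not for you: if $\|\nabla u\|_2\to 0$ along a sequence in $\mathcal{M}$, then $\|u\|_2\ge\rho_0/2$ eventually, so the right-hand side stays bounded \emph{below} by a fixed positive constant while the left-hand side tends to $0$. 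The inequality is then trivially satisfied and yields no contradiction; you cannot conclude $\|\nabla u\|_2\ge\rho_0'>0$. In fact the constraint itself forces $\int(I_\alpha*F(u))F(u)\,\mathrm{d}x\ge\gamma_1\rho_0^2/(N+\alpha)>0$ on all of $\mathcal{M}$, which shows the nonlocal term is pinned away from zero precisely through the $L^2$-mass, so a sequence in $\mathcal{M}$ with vanishing gradient norm cannot be excluded this way.

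The idea you are missing is Corollary~\ref{cor2.4}: for $u\in\mathcal{M}$ one has $\mathcal{I}(u)=\max_{t>0}\mathcal{I}(u_t)$. The paper takes a minimizing sequence $\{u_n\}$ and splits into the case $\inf_n\|\nabla u_n\|_2>0$ (handled exactly by your inequality above) and the degenerate case $\|\nabla u_n\|_2\to 0$, $\|u_n\|_2\ge\rho_0/2$. In the degenerate case, rather than trying to contradict the constraint, one picks the scaling parameter $t_n=\bigl(\delta_0/4C_2\bigr)^{1/\alpha}\|u_n\|_2^{-2/N}$ and evaluates $\mathcal{I}\bigl((u_n)_{t_n}\bigr)$ directly from \eqref{B23}: the potential term, split between $|t_n x|<R$ and $|t_n x|\ge R$ and estimated via Sobolev, H\"older and (V1), dominates the contribution of the nonlocal term estimated by \eqref{G64}, and the gradient contribution is negligible since $t_n$ is bounded and $\|\nabla u_n\|_2\to 0$. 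The algebra collapses to $\mathcal{I}(u_n)\ge\mathcal{I}\bigl((u_n)_{t_n}\bigr)\ge\frac{\delta_0}{8}\bigl(\frac{\delta_0}{4C_2}\bigr)^{N/\alpha}+o(1)$, producing the positive lower bound on $m$ without ever asserting that $\|\nabla u\|_2$ is bounded below on $\mathcal{M}$. Your proposal needs to be replaced by this scaling step; as written, the contradiction you aim for does not materialize.
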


 \begin{proof} (i). Since $\mathcal{P}(u)=0$ for all $ u\in \mathcal{M}$, by \eqref{Ru}, \eqref{Jv}, \eqref{B26} and Sobolev
 embedding theorem, one has
 \begin{eqnarray}\label{G62}
   \frac{\gamma_1}{2}\|u\|^2
   & \le & \frac{N-2}{2}\|\nabla u\|_2^2+\frac{1}{2}\int_{{\R}^N}[NV(x)+\nabla V(x)\cdot x]u^2\mathrm{d}x \nonumber\\
   &  =  & \frac{N+\alpha}{2}\int_{\R^N}(I_{\alpha}*F(u))F(u)\mathrm{d}x \nonumber\\
   & \le & \|u\|^{2(N+\alpha)/N}+C_1\|u\|^{2(N+\alpha)/(N-2)},
 \end{eqnarray}
 which implies
 \begin{equation}\label{G63}
   \|u\|\ge \rho_0:=\min\left\{1, \left[\frac{\gamma_1}{2(1+C_1)}\right]^{N/2\alpha}\right\}, \ \ \ \ \forall \ u\in \mathcal{M}.
 \end{equation}

 \vskip2mm
 \par
   (ii). Let $\{u_n\}\subset \mathcal{M}$ be such that $\mathcal{I}(u_n)\rightarrow m$. There are two possible cases:

 \par
  1) $\inf_{n\in\N}\|\nabla u_n\|_2>0$ \ and 2) $\inf_{n\in\N}\|\nabla u_n\|_2=0$.

 \vskip2mm
 \par
   Case 1). $\inf_{n\in\N}\|\nabla u_n\|_2:=\varrho_0>0$. In this case, from \eqref{B21} with $t \rightarrow 0$, we have
 \begin{equation*}
   m+o(1)=\mathcal{I}(u_n)
    \ge \frac{(1-\theta)(2+\alpha)}{2(N+\alpha)}\|\nabla u_n\|_2^2\ge \frac{(1-\theta)(2+\alpha)}{2(N+\alpha)}\varrho_0^2.
 \end{equation*}

 \par
   Case 2). $\inf_{n\in\N}\|\nabla u_n\|_2=0$. In this case, by \eqref{G63}, passing to a subsequence, one has
 \begin{equation}\label{Va1}
   \|\nabla u_n\|_2\to 0, \ \ \ \ \|u_n\|_2\ge \frac{1}{2}\rho_0.
 \end{equation}
 By \eqref{Ru} and the Sobolev inequality, one has for all $u\in H^1(\R^N)$,
 \begin{eqnarray}\label{G64}
   &     & \int_{\R^N}(I_{\alpha}*F(u))F(u)\mathrm{d}x\nonumber\\
   & \le & C_2\left(\|u\|_2^{2(N+\alpha)/N}+\|u\|_{2^*}^{2(N+\alpha)/(N-2)}\right)\nonumber\\
   & \le & C_2\left(\|u\|_2^{2(N+\alpha)/N}+S^{-(N+\alpha)/(N-2)}\|\nabla u\|_{2}^{2(N+\alpha)/(N-2)}\right).
 \end{eqnarray}
 By (V1), there exists $R>0$ such that $V(x)\ge \frac{V_{\infty}}{2}$ for $|x|\ge R$. This implies
 \begin{equation}\label{G65}
   \int_{|tx|\ge R}V(tx)u^2\mathrm{d}x\ge \frac{V_{\infty}}{2}\int_{|tx|\ge R}u^2\mathrm{d}x, \ \ \ \ \forall\ t>0,\ u\in H^1(\R^N).
 \end{equation}
 Making use of the H\"older inequality and the Sobolev inequality, we get
 \begin{eqnarray}\label{G66}
   \int_{|tx|< R}u^2\mathrm{d}x
    & \le & \left(\frac{\omega_N R^N}{t^N}\right)^{(2^*-2)/2^*}\left(\int_{|tx|< R}u^{2^*}\mathrm{d}x\right)^{2/2^*}\nonumber\\
    & \le & \omega_N^{2/N} R^2t^{-2}S^{-1}\|\nabla u\|_2^2, \ \ \ \ \forall\ t>0,\ u\in H^1(\R^N).
 \end{eqnarray}
 Let
 \begin{equation}\label{G67}
  \delta_0=\min\left\{V_{\infty}, SR^{-2}\omega_N^{-2/N}\right\}
 \end{equation}
 and
 \begin{equation}\label{G68}
  t_n=\left(\frac{\delta_0}{4C_2}\right)^{1/\alpha}\|u_n\|_2^{-2/N}.
 \end{equation}
 Then \eqref{Va1} implies $\{t_n\}$ is bounded.
 Thus it follows from \eqref{B23}, \eqref{Imax}, \eqref{Va1}, \eqref{G64}, \eqref{G65}, \eqref{G66}, \eqref{G67} and \eqref{G68} that
 \begin{eqnarray}\label{G69}
   m+o(1)
   &  =  & \mathcal{I}(u_n)\ge \mathcal{I}\left((u_n)_{t_n}\right)\nonumber\\
   &  =  &  \frac{t_n^{N-2}}{2}\|\nabla u_n\|_2^2+\frac{t_n^N}{2}\int_{{\R}^N}V(t_nx)u_n^2\mathrm{d}x
             -\frac{t_n^{N+\alpha}}{2}\int_{\R^N}(I_{\alpha}*F(u_n))F(u_n)\mathrm{d}x\nonumber\\
   & \ge & \frac{S}{2R^2\omega_N^{2/N}}t_n^N\int_{|t_nx|< R}u_n^2\mathrm{d}x
              +\frac{1}{4}V_{\infty}t_n^N\int_{|t_nx|\ge R}u_n^2\mathrm{d}x\nonumber\\
   &     &  \ \  -\frac{1}{2}C_2t_n^{N+\alpha}\|u_n\|_2^{2(N+\alpha)/N}
                 -\frac{C_2}{2S^{(N+\alpha)/(N-2)}}t_n^{N+\alpha}\|\nabla u_n\|_{2}^{2(N+\alpha)/(N-2)}\nonumber\\
   & \ge & \frac{1}{4}\delta_0t_n^N\|u_n\|_2^2-\frac{1}{2}C_2t_n^{N+\alpha}\|u_n\|_2^{2(N+\alpha)/N}+o(1)\nonumber\\
   &  =  & \frac{1}{4}t_n^N\|u_n\|_2^2\left(\delta_0-2C_2t_n^{\alpha}\|u_n\|_2^{2\alpha/N}\right)+o(1)\nonumber\\
   &  =  & \frac{\delta_0}{8}\left(\frac{\delta_0}{4C_2}\right)^{N/\alpha}+o(1).
 \end{eqnarray}
 Cases 1) and 2) show that $m=\inf_{u\in \mathcal{M}}\mathcal{I}(u)>0$.
 \end{proof}

 \begin{lemma}\label{lem 3.1}
   Assume that {\rm (V1)-(V3)} and {\rm (F1)-(F3)}  hold. Then $m\le m^{\infty}$.
 \end{lemma}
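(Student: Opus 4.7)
The plan is to prove $m \le m^\infty$ by a direct comparison argument that exploits (V2) together with the minimax characterization from Lemma \ref{lem 2.9}. The underlying idea is very simple: pointwise $V(x)\le V_\infty$ implies $\mathcal{I}(u_t)\le \mathcal{I}^\infty(u_t)$ for every $u\in H^1(\R^N)$ and $t>0$, so rescaling along the curve $t\mapsto u_t$ gives a corresponding inequality for the Pohožaev mountain-pass values.

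Concretely, I would first pick an arbitrary $u\in \mathcal{M}^\infty$. By Lemma \ref{lem 2.6}, $\mathcal{P}^\infty(u)=0$ forces $u\in \Lambda$, so Lemma \ref{lem 2.7} supplies a unique $t_u>0$ with $u_{t_u}\in \mathcal{M}$. By definition of $m$ and Corollary \ref{cor2.4},
\begin{equation*}
  m \;\le\; \mathcal{I}(u_{t_u}) \;=\; \max_{t>0}\mathcal{I}(u_t).
\end{equation*}
Then I would use (V2) termwise in the expression \eqref{B23} for $\mathcal{I}(u_t)$ (and its analogue for $\mathcal{I}^\infty(u_t)$) to conclude $\mathcal{I}(u_t)\le \mathcal{I}^\infty(u_t)$ for every $t>0$, hence
\begin{equation*}
  \max_{t>0}\mathcal{I}(u_t) \;\le\; \max_{t>0}\mathcal{I}^\infty(u_t).
\end{equation*}

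Next, since $V\equiv V_\infty$ satisfies (V1)--(V3), Corollary \ref{cor2.4} applies to $\mathcal{I}^\infty$ as well, so $u\in\mathcal{M}^\infty$ yields $\max_{t>0}\mathcal{I}^\infty(u_t)=\mathcal{I}^\infty(u)$. Chaining the three displays,
\begin{equation*}
  m \;\le\; \mathcal{I}^\infty(u),\qquad \forall\,u\in\mathcal{M}^\infty.
\end{equation*}
Taking the infimum over $\mathcal{M}^\infty$ produces $m\le m^\infty$, as required.

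I do not anticipate any serious obstacle here: the argument is essentially a one-line consequence of (V2) combined with the two-sided minimax characterization encoded in Lemmas \ref{lem 2.6}, \ref{lem 2.7}, Corollary \ref{cor2.4} and Lemma \ref{lem 2.9}. The only point that needs a brief check is that those earlier lemmas apply equally to the constant potential $V_\infty$, which the paper has explicitly remarked at the start of Section 2.
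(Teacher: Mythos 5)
Your proof is correct and is, in substance, the same argument as the paper's: both rest on (i) rescaling $u\in\mathcal{M}^\infty$ to $u_{t_u}\in\mathcal{M}$ via Lemmas \ref{lem 2.6}--\ref{lem 2.7}, (ii) the pointwise comparison $\mathcal{I}(u_t)\le\mathcal{I}^\infty(u_t)$ from (V2), and (iii) the maximality $\mathcal{I}^\infty(u)=\max_{t>0}\mathcal{I}^\infty(u_t)$ from Corollary \ref{cor2.4} applied to the constant potential. The paper packages this chain as a contradiction argument with a near-minimizer $u_\varepsilon^\infty$, whereas you state it directly and take an infimum at the end; this is a cosmetic difference only.
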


 \begin{proof} Arguing indirectly, we assume that $m> m^{\infty}$. Let $\varepsilon:=m-m^{\infty}$.
 Then there exists $u_{\varepsilon}^{\infty}$ such that
 \begin{equation}\label{P11}
   u_{\varepsilon}^{\infty}\in \mathcal{M}^{\infty} \ \ \ \  \mbox{and}
     \ \ \ \ m^{\infty}+\frac{\varepsilon}{2}>\mathcal{I}^{\infty}(u_{\varepsilon}^{\infty}).
 \end{equation}
 In view of Lemmas \ref{lem 2.6} and \ref{lem 2.7}, there exists $t_{\varepsilon}>0$ such that $(u_{\varepsilon}^{\infty})_{t_{\varepsilon}}\in \mathcal{M}$. Thus,
 it follows from (V1), (V2), \eqref{IU}, \eqref{Ii}, \eqref{B25} and \eqref{P11} that
 $$
   m^{\infty}+\frac{\varepsilon}{2}>\mathcal{I}^{\infty}(u_{\varepsilon}^{\infty})
     \ge \mathcal{I}^{\infty}\left((u_{\varepsilon}^{\infty})_{t_{\varepsilon}}\right)
     \ge \mathcal{I}\left((u_{\varepsilon}^{\infty})_{t_{\varepsilon}}\right)\ge m.
 $$
 This contradiction shows the conclusion of Lemma \ref{lem 3.1} is true.
 \end{proof}

 \begin{lemma}\label{lem 3.2}
  Assume that {\rm (V1)-(V3)}  and  {\rm (F1)-(F3)}  hold. Then $m$ is achieved.
 \end{lemma}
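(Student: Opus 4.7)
The plan is to take a minimizing sequence $\{u_n\}\subset\mathcal{M}$ with $\mathcal{I}(u_n)\to m$ and $\mathcal{P}(u_n)=0$, show it is bounded in $H^1(\R^N)$, extract a non-trivial weak limit after a possible translation, and then use Lemma \ref{lem 2.2} together with the sandwich $m\le m^\infty$ from Lemma \ref{lem 3.1} to exhibit a minimizer.

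For the boundedness, applying Lemma \ref{lem 2.2} to $u=u_n$ and letting $t\to 0^+$ gives, via $\mathcal{P}(u_n)=0$ and $\mathfrak{g}(0)=2+\alpha$,
\begin{equation*}
m+o(1)=\mathcal{I}(u_n)\ge \frac{(1-\theta)(2+\alpha)}{2(N+\alpha)}\|\nabla u_n\|_2^2,
\end{equation*}
which bounds $\|\nabla u_n\|_2$; the bound on $\|u_n\|_2$ then follows from $\mathcal{P}(u_n)=0$, Lemma \ref{lem 2.5}, the estimate \eqref{Ru}, and the decay of $\nabla V\cdot x$ at infinity supplied by Lemma \ref{lem 2.0}. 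Since $\|u_n\|\ge \rho_0$ by Lemma \ref{lem 2.11}(i), vanishing is ruled out: were $\sup_{y\in\R^N}\int_{B_1(y)} u_n^2\,\mathrm{d}x\to 0$, Lions' lemma combined with \eqref{Ru} for some $p\in(2,2^*)$ would force $\int(I_\alpha*F(u_n))F(u_n)\,\mathrm{d}x\to 0$, and then $\mathcal{P}(u_n)=0$ together with Lemma \ref{lem 2.5} would give $\|u_n\|\to 0$, a contradiction. Hence there exist $\delta>0$ and $y_n\in\R^N$ with $\int_{B_1(y_n)} u_n^2\,\mathrm{d}x\ge\delta$, and after passing to a subsequence $\tilde u_n:=u_n(\cdot+y_n)\rightharpoonup \tilde u\ne 0$ in $H^1(\R^N)$.

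The argument then splits into two cases. If $|y_n|\to\infty$, then $V(\cdot+y_n)\to V_\infty$ a.e.\ by (V1); the analogue of Lemma \ref{lem 2.10} for $\mathcal{I}^\infty$ applied to the translated sequence forces $\mathcal{P}^\infty(\tilde u)\le 0$, so $\tilde u\in\Lambda$ by Lemma \ref{lem 2.6}. Lemma \ref{lem 2.7} provides $t_0>0$ with $\tilde u_{t_0}\in\mathcal{M}$, and chaining (V2), Corollary \ref{cor2.3} and Lemma \ref{lem 3.1} yields the squeeze
\begin{equation*}
m\le \mathcal{I}(\tilde u_{t_0})\le \mathcal{I}^\infty(\tilde u_{t_0})\le \mathcal{I}^\infty(\tilde u)\le \liminf_{n\to\infty}\mathcal{I}^\infty(\tilde u_n)\le \liminf_{n\to\infty}\mathcal{I}(u_n)=m,
\end{equation*}
so $\bar u:=\tilde u_{t_0}\in\mathcal{M}$ realizes $m$. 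If instead $\{y_n\}$ is bounded, we may assume $u_n\rightharpoonup\bar u\ne 0$ directly; Lemma \ref{lem 2.10} gives $\mathcal{P}(u_n-\bar u)+\mathcal{P}(\bar u)=o(1)$. I would rule out $\mathcal{P}(\bar u)\ne 0$ by scaling: if $\mathcal{P}(\bar u)<0$, Lemma \ref{lem 2.7} produces some $t_{\bar u}\in(0,1)$ with $\bar u_{t_{\bar u}}\in\mathcal{M}$, and applying Lemma \ref{lem 2.2} to $u_n$ at $t=t_{\bar u}$, together with weak lower semi-continuity and the strict positivity of $\mathfrak{g}(t_{\bar u})$, produces a strict energy gap contradicting $\mathcal{I}(\bar u_{t_{\bar u}})\ge m$; a symmetric argument on the residual $u_n-\bar u$ precludes $\mathcal{P}(\bar u)>0$. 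Hence $\bar u\in\mathcal{M}$, and the same weak lower semi-continuity (with Lemma \ref{lem 2.0} ensuring that the $\nabla V\cdot x$ term passes to the limit by dominated convergence) gives $\mathcal{I}(\bar u)\le m$, whence $\mathcal{I}(\bar u)=m$.

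The principal obstacle is the compact case: converting the quadratic bonus $\tfrac{(1-\theta)\mathfrak{g}(t)}{2(N+\alpha)}\|\nabla u\|_2^2$ in Lemma \ref{lem 2.2} into a genuine energy gap at the weak limit, and simultaneously handling the nonlocal term and the $\nabla V\cdot x$ contribution under only weak $H^1$ convergence. The noncompact case is considerably cleaner because the sandwich $m\le m^\infty$ of Lemma \ref{lem 3.1} and the pointwise bound $V\le V_\infty$ immediately collapse the entire chain above into equalities.
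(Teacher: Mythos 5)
Your high-level skeleton (bound the minimizing sequence, rule out vanishing via Lions, split into a noncompact and a compact case, and close with a squeeze against $m\le m^\infty$) matches the paper's strategy, but three crucial links are missing or wrong, and they are precisely the points where the paper's proof has to do real work.

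\textbf{Boundedness of $\|u_n\|_2$.} Your gradient bound via Lemma \ref{lem 2.2} with $t\to 0^+$ is correct, but the claim that $\|u_n\|_2$ is then bounded ``from $\mathcal{P}(u_n)=0$, Lemma \ref{lem 2.5}, \eqref{Ru}, and Lemma \ref{lem 2.0}'' does not go through. From $\mathcal{P}(u_n)=0$ and Lemma \ref{lem 2.5} one gets $\frac{\gamma_1}{2}\|u_n\|^2\le\frac{N+\alpha}{2}\int(I_\alpha*F(u_n))F(u_n)\,\mathrm{d}x$, and the right side controlled by \eqref{Ru} behaves like $\|u_n\|_2^{2(N+\alpha)/N}$ (exponent $>2$) when $\|\nabla u_n\|_2$ is bounded and $\|u_n\|_2$ is large, so the inequality is not self-contradictory. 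The paper instead exploits the maximality property $\mathcal{I}(u_n)=\max_{t>0}\mathcal{I}((u_n)_t)$ from Corollary \ref{cor2.4} and tests against the scaling $\hat t_n=(16m/\delta_0)^{1/N}\|u_n\|_2^{-2/N}$, obtaining $m+o(1)\ge 2m+o(1)$ as in \eqref{P27}. That scaling step is not optional.

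\textbf{Weak lower semicontinuity.} Several of your inequalities rely on $\mathcal{I}$ or $\mathcal{I}^\infty$ being weakly lower semicontinuous, but neither is, because of the negative nonlocal term. In the noncompact case you write $\mathcal{I}^\infty(\tilde u)\le\liminf\mathcal{I}^\infty(\tilde u_n)$, which is not available. In the compact case your contradiction hinges on $\liminf\mathcal{I}\bigl((u_n)_{t_{\bar u}}\bigr)\ge\mathcal{I}(\bar u_{t_{\bar u}})$, which again uses the non-existent lower semicontinuity. The paper circumvents this by passing to the \emph{quadratic} auxiliary functionals
$\Psi(u)=\mathcal{I}(u)-\frac{1}{N+\alpha}\mathcal{P}(u)$ (see \eqref{K73}) and its autonomous analogue $\Psi_0$ in \eqref{Ps0}; these are weakly lower semicontinuous because the nonlocal term cancels, and this identity is the load-bearing trick in both cases.

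\textbf{The sign of $\mathcal{P}^\infty(\tilde u)$ and the final chain.} In the noncompact case you assert that the Brezis--Lieb decomposition ``forces'' $\mathcal{P}^\infty(\tilde u)\le 0$. It does not: the decomposition only gives $\mathcal{P}^\infty(\tilde u)+\mathcal{P}^\infty(\tilde u_n-\tilde u)=o(1)$. The paper rules out $\mathcal{P}^\infty(\tilde u)>0$ by scaling the residual $w_n$ into $\mathcal{M}^\infty$ and using $\Psi_0$, which yields $m-\Psi_0(\tilde u)+o(1)\ge m^\infty\ge m$, contradicting $\Psi_0(\tilde u)>0$. Moreover, your chain also contains $\mathcal{I}^\infty(\tilde u_{t_0})\le\mathcal{I}^\infty(\tilde u)$ for the $t_0$ that places $\tilde u_{t_0}$ in $\mathcal{M}$; this has no justification unless $\tilde u$ itself lies in $\mathcal{M}^\infty$ (which is what must be proved), and in fact a scaling that puts $\tilde u$ on the constraint tends to \emph{increase} the energy. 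Finally, $\liminf\mathcal{I}^\infty(\tilde u_n)\le\liminf\mathcal{I}(u_n)$ is the wrong direction under (V2); the paper proves $\mathcal{I}^\infty(u_n)\to m$ and $\mathcal{P}^\infty(u_n)\to 0$ in \eqref{P37} by showing that the two error integrals vanish (using $u_n\rightharpoonup 0$, (V1), (V3) and Lemma \ref{lem 2.0}), rather than by a one-sided inequality.

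In short, you have the right architecture, but the proof needs the scaling argument for the $L^2$ bound and the quadratic functionals $\Psi,\Psi_0$ to replace the unavailable weak lower semicontinuity; without those, both the sign determination $\mathcal{P}(\bar u)\le 0$ (resp.\ $\mathcal{P}^\infty(\tilde u)\le 0$) and the final squeeze to equality are unjustified.
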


 \begin{proof}  In view of Lemma \ref{lem 2.11}, we have $m>0$. Let $\{u_n\}\subset \mathcal{M}$ be such that
 $\mathcal{I}(u_n)\rightarrow m$. Since $\mathcal{P}(u_n)=0$, then it follows from \eqref{B21} with $t \rightarrow 0$ that
 \begin{equation}\label{P24}
   m+o(1)= \mathcal{I}(u_n)\ge \frac{(1-\theta)(2+\alpha)}{2(N+\alpha)}\|\nabla u_n\|_2^2.
 \end{equation}
 This shows that $\{\|\nabla u_n\|_2\}$ is bounded. Next, we prove that $\{\|u_n\|_2\}$ is also bounded.
 Arguing by contradiction, suppose that $\|u_n\|_2 \to \infty$. By \eqref{Ru} and the Sobolev inequality,
 one has
\begin{eqnarray}\label{P25}
  &     & \int_{{\R}^N}(I_{\alpha}*F(u))F(u)\mathrm{d}x\nonumber\\
  & \le & \frac{\delta_0}{4}\left(\frac{\delta_0}{16m}\right)^{\alpha/N}\|u\|_2^{2(N+\alpha)/N}
            +C_3\|u\|_{2^*}^{2(N+\alpha)/(N-2)}\nonumber\\
  & \le & \frac{\delta_0}{4}\left(\frac{\delta_0}{16m}\right)^{\alpha/N}\|u\|_2^{2(N+\alpha)/N}
            +C_3S^{-(N+\alpha)/(N-2)}\|\nabla u\|_{2}^{2(N+\alpha)/(N-2)},\nonumber\\
  &     & \ \ \ \ \ \ \ \ \ \ \ \ \ \ \ \ \ \ \ \ \ \ \ \ \ \ \ \ \ \ \ \ \ \ \ \ \forall \ u\in H^1(\R^N),
 \end{eqnarray}
 where $\delta_0$ is given by \eqref{G67}.
 Let
 \begin{equation}\label{P26}
  \hat{t}_n=\left(\frac{16m}{\delta_0}\right)^{1/N}\|u_n\|_2^{-2/N}.
 \end{equation}
 Then $\hat{t}_n\to 0$. Thus it follows from \eqref{B23}, \eqref{Imax}, \eqref{G65}, \eqref{G66}, \eqref{G67}, \eqref{P25} and \eqref{P26} that
 \begin{eqnarray}\label{P27}
   m+o(1)
    &  =  & \mathcal{I}(u_n) \ge \mathcal{I}\left((u_n)_{\hat{t}_n}\right)\nonumber\\
    &  =  & \frac{\hat{t}_n^{N-2}}{2}\|\nabla u_n\|_2^2+\frac{\hat{t}_n^N}{2}\int_{{\R}^N}V(\hat{t}_nx)u_n^2\mathrm{d}x
             -\frac{\hat{t}_n^{N+\alpha}}{2}\int_{\R^N}(I_{\alpha}*F(u_n))F(u_n)\mathrm{d}x\nonumber\\
   & \ge & \frac{S}{2R^2\omega_N^{2/N}}\hat{t}_n^N\int_{|\hat{t}_nx|< R}u_n^2\mathrm{d}x
              +\frac{1}{4}V_{\infty}\hat{t}_n^N\int_{|\hat{t}_nx|\ge R}u_n^2\mathrm{d}x\nonumber\\
   &     &  \ \ -\frac{1}{8}\delta_0\left(\frac{\delta_0}{16m}\right)^{\alpha/N}\hat{t}_n^{N+\alpha}\|u_n\|_2^{2(N+\alpha)/N}
                 -\frac{C_3}{2S^{(N+\alpha)/(N-2)}}\hat{t}_n^{N+\alpha}\|\nabla u_n\|_{2}^{2(N+\alpha)/(N-2)}\nonumber\\
   & \ge & \frac{1}{4}\delta_0\hat{t}_n^N\|u_n\|_2^2-\frac{\delta_0}{8}\left(\frac{\delta_0}{16m}\right)^{\alpha/N}
                \hat{t}_n^{N+\alpha}\|u_n\|_2^{2(N+\alpha)/N}+o(1)\nonumber\\
   &  =  & \frac{\delta_0}{4}\hat{t}_n^N\|u_n\|_2^2\left[1-\frac{1}{2}
              \left(\frac{\delta_0\hat{t}_n^N\|u_n\|_2^2}{16m}\right)^{\alpha/N}\right]+o(1)\nonumber\\
   &  =  & 2m+o(1).
 \end{eqnarray}
 This contradiction shows that $\{\|u_n\|_2\}$ is also bounded. Hence, $\{u_n\}$ is bounded in $H^1(\R^N)$.
 Passing to a subsequence, we have $u_n\rightharpoonup \bar{u}$ in $H^1(\R^N)$. Then $u_n\rightarrow \bar{u}$
 in $L_{\mathrm{loc}}^s(\R^N)$ for $2\le s<2^*$ and $u_n\rightarrow \bar{u}$ a.e. in $\R^N$. There are two
 possible cases: i). $\bar{u}=0$ and ii). $\bar{u}\ne 0$.

 \vskip2mm
 \par
   Case i). $\bar{u}=0$, i.e. $u_n\rightharpoonup 0$ in $H^1(\R^N)$.
 Then $u_n\rightarrow 0$ in $L_{\mathrm{loc}}^s(\R^N)$ for $2\le s<2^*$ and $u_n\rightarrow 0$ a.e. in $\R^N$.
 By (V1) and (V3), it is easy to show that
 \begin{equation}\label{P32}
   \lim_{n\to\infty}\int_{\R^N}[V_{\infty}-V(x)]u_n^2\mathrm{d}x=
   \lim_{n\to\infty}\int_{\R^N}\nabla V(x)\cdot xu_n^2\mathrm{d}x=0.
 \end{equation}
 From \eqref{IU}, \eqref{Ii}, \eqref{Pi}, \eqref{Jv} and \eqref{P32}, one can get
 \begin{equation}\label{P37}
   \mathcal{I}^{\infty}(u_n)\rightarrow m, \ \ \ \ \mathcal{P}^{\infty}(u_n)\rightarrow 0.
 \end{equation}
 From Lemma \ref{lem 2.11} (i), \eqref{Pi} and \eqref{P37}, one has
 \begin{eqnarray}\label{P64}
   \min\{N-2, NV_{\infty}\}\rho_0^2
    & \le & \min\{N-2, NV_{\infty}\}\|u_n\|^2\nonumber\\
    & \le & (N-2)\|\nabla u_n\|_2^2+NV_{\infty}\|u_n\|_2^2\nonumber\\
    &  =  & (N+\alpha)\int_{\R^N}(I_{\alpha}*F(u_n))F(u_n)\mathrm{d}x+o(1).
 \end{eqnarray}
 Using \eqref{Ru}, \eqref{P64} and Lions' concentration compactness principle \cite[Lemma 1.21]{WM}, we can prove that
 there exist $\delta>0$ and a sequence $\{y_n\}\subset \R^N$ such that $\int_{B_1(y_n)}|u_n|^2\mathrm{d}x> \delta$. Let
 $\hat{u}_n(x)=u_n(x+y_n)$. Then we have $\|\hat{u}_n\|=\|u_n\|$ and
 \begin{equation}\label{P65}
   \mathcal{I}^{\infty}(\hat{u}_n)\rightarrow m, \ \ \ \ \mathcal{P}^{\infty}(\hat{u}_n)= o(1)\rightarrow 0,
     \ \ \ \ \int_{B_1(0)}|\hat{u}_n|^2\mathrm{d}x> \delta.
 \end{equation}
 Therefore, there exists $\hat{u}\in H^1(\R^N)\setminus \{0\}$ such that, passing to a subsequence,
 \begin{equation}\label{P72}
 \left\{
   \begin{array}{ll}
     \hat{u}_n\rightharpoonup \hat{u}, & \mbox{in} \ H^1(\R^N); \\
     \hat{u}_n\rightarrow \hat{u}, & \mbox{in} \ L_{\mathrm{loc}}^s(\R^N), \ \forall \ s\in [1, 2^*);\\
     \hat{u}_n\rightarrow \hat{u}, & \mbox{a.e. on} \ \R^N.
   \end{array}
 \right.
 \end{equation}
 Let $w_n=\hat{u}_n-\hat{u}$. Then \eqref{P72} and Lemma \ref{lem 2.10} yield
 \begin{equation}\label{D73}
    \mathcal{I}^{\infty}(\hat{u}_n)=\mathcal{I}^{\infty}(\hat{u})+\mathcal{I}^{\infty}(w_n)+o(1)
 \end{equation}
 and
 \begin{equation}\label{D74}
    \mathcal{P}^{\infty}(\hat{u}_n) = \mathcal{P}^{\infty}(\hat{u})+\mathcal{P}^{\infty}(w_n)+o(1).
 \end{equation}
 Set
 \begin{equation}\label{Ps0}
   \Psi_0(u)=\frac{(2+\alpha)\|\nabla u\|_2^2+\alpha V_{\infty}\|u\|_2^2}{2(N+\alpha)}.
 \end{equation}
 From \eqref{Ii}, \eqref{Pi}, \eqref{P65}, \eqref{D73} and \eqref{D74}, one has
 \begin{equation}\label{P75}
    \Psi_0(w_n)=m-\Psi_0(\hat{u})+o(1),
      \ \ \ \ \mathcal{P}^{\infty}(w_n) = -\mathcal{P}^{\infty}(\hat{u})+o(1).
 \end{equation}
 If there exists a subsequence $\{w_{n_i}\}$ of $\{w_n\}$ such that $w_{n_i}=0$, then going to this subsequence, we have
 \begin{equation}\label{P76}
    \mathcal{I}^{\infty}(\hat{u})=m, \ \ \ \ \mathcal{P}^{\infty}(\hat{u})=0.
 \end{equation}
 Next, we assume that $w_n\ne 0$. We claim that $\mathcal{P}^{\infty}(\hat{u})\le 0$. Otherwise, if $\mathcal{P}^{\infty}(\hat{u})>0$,
 then \eqref{P75} implies $\mathcal{P}^{\infty}(w_n) < 0$ for large $n$. In view of Lemma \ref{lem 2.6} and Corollary \ref{cor2.8}, there exists $t_n>0$ such that $(w_n)_{t_n}\in \mathcal{M}^{\infty}$ for large $n$. From \eqref{Ii}, \eqref{Pi}, \eqref{B25}, \eqref{Ps0} and \eqref{P75}, we obtain
 \begin{eqnarray*}
   m-\Psi_0(\hat{u})+o(1)
    &  =  & \Psi_0(w_n) = \mathcal{I}^{\infty}(w_n)-\frac{1}{N+\alpha}\mathcal{P}^{\infty}(w_n)\nonumber\\
    & \ge & \mathcal{I}^{\infty}\left({(w_n)}_{t_n}\right)-\frac{t_n^N}{N+\alpha}\mathcal{P}^{\infty}(w_n)\nonumber\\
    & \ge & m^{\infty}-\frac{t_n^N}{N+\alpha}\mathcal{P}^{\infty}(w_n)\ge m^{\infty},
 \end{eqnarray*}
 which implies $\mathcal{P}^{\infty}(\hat{u})\le 0$ due to $m\le m^{\infty}$ and $\Psi_0(\hat{u})>0$. Since $\hat{u}\ne 0$ and
 $\mathcal{P}^{\infty}(\hat{u})\le 0$, in view of Lemma \ref{lem 2.6} and Corollary \ref{cor2.8}, there exists
 $\hat{t}>0$ such that $\hat{u}_{\hat{t}}\in \mathcal{M}^{\infty}$.  From \eqref{Ii}, \eqref{Pi}, \eqref{B25}, \eqref{Ps0}, \eqref{P65}
 and the weak semicontinuity of norm, one has
 \begin{eqnarray*}
   m &  =  & \lim_{n\to\infty} \left[\mathcal{I}^{\infty}(\hat{u}_n)-\frac{1}{N+\alpha}\mathcal{P}^{\infty}(\hat{u}_n)\right]\nonumber\\
     &  =  & \lim_{n\to\infty}\Psi_0(\hat{u}_n)\ge \Psi_0(\hat{u})\nonumber\\
     &  =  & \mathcal{I}^{\infty}(\hat{u})-\frac{1}{N+\alpha}\mathcal{P}^{\infty}(\hat{u})\ge \mathcal{I}^{\infty}\left({\hat{u}}_{\hat{t}}\right)
               -\frac{\hat{t}^N}{N+\alpha}\mathcal{P}^{\infty}(\hat{u})\nonumber\\
     & \ge & m^{\infty}-\frac{\hat{t}^N}{N+\alpha}\mathcal{P}^{\infty}(\hat{u})\nonumber\\
     & \ge & m-\frac{\hat{t}^N}{N+\alpha}\mathcal{P}^{\infty}(\hat{u})\ge m,
 \end{eqnarray*}
 which implies \eqref{P76} holds also. In view of Lemmas \ref{lem 2.6} and \ref{lem 2.7}, there exists $\tilde{t}>0$ such that
 $\hat{u}_{\tilde{t}}\in \mathcal{M}$, moreover, it follows from (V1), (V2), \eqref{IU}, \eqref{Ii}, \eqref{P76} and Corollary \ref{cor2.3} that
 $$
   m\le \mathcal{I}(\hat{u}_{\tilde{t}})\le \mathcal{I}^{\infty}(\hat{u}_{\tilde{t}})\le \mathcal{I}^{\infty}(\hat{u})=m.
 $$
 This shows that $m$ is achieved at $\hat{u}_{\tilde{t}}\in \mathcal{M}$.

 \vskip2mm
 \par
   Case ii). $\bar{u}\ne 0$. Let $v_n=u_n-\bar{u}$. Then Lemma \ref{lem 2.10} yields
 \begin{equation}\label{K71}
    \mathcal{I}(u_n)=\mathcal{I}(\bar{u})+\mathcal{I}(v_n)+o(1)
 \end{equation}
 and
 \begin{equation}\label{K72}
   \mathcal{P}(u_n)=\mathcal{P}(\bar{u})+\mathcal{P}(v_n)+o(1).
 \end{equation}
 Set
 \begin{equation}\label{K73}
   \Psi(u)=\frac{2+\alpha}{2(N+\alpha)}\|\nabla u\|_2^2+\frac{1}{2(N+\alpha)}\int_{{\R}^N}[\alpha V(x)-(\nabla V(x), x)]u^2\mathrm{d}x.
 \end{equation}
 Then it follows from \eqref{B22} and \eqref{B27} that
 \begin{eqnarray}\label{K74}
   &     & (2+\alpha)\|\nabla u\|_2^2+\int_{{\R}^N}[\alpha V(x)-(\nabla V(x), x)]u^2\mathrm{d}x\nonumber\\
   & \ge & (2+\alpha)\|\nabla u\|_2^2-\frac{(2+\alpha)(N-2)^2\theta}{4}\int_{{\R}^N}\frac{u^2}{|x|^2}\mathrm{d}x\nonumber\\
   & \ge & (1-\theta)(2+\alpha)\|\nabla u\|_2^2, \ \ \ \ \forall \ u\in H^1(\R^N).
 \end{eqnarray}
 Since $\mathcal{I}(u_n)\rightarrow m$ and $\mathcal{P}(u_n)=0$, then it follows from \eqref{IU}, \eqref{Jv}, \eqref{K71}, \eqref{K72} and \eqref{K73} that
 \begin{equation}\label{K75}
    \Psi(v_n)=m-\Psi(\bar{u})+o(1), \ \ \ \ \mathcal{P}(v_n) = -\mathcal{P}(\bar{u})+o(1).
 \end{equation}
 If there exists a subsequence $\{v_{n_i}\}$ of $\{v_n\}$ such that $v_{n_i}=0$, then going to this subsequence, we have
 \begin{equation}\label{K76}
    \mathcal{I}(\bar{u})=m, \ \ \ \ \mathcal{P}(\bar{u})=0,
 \end{equation}
 which implies the conclusion of Lemma \ref{lem 3.2} holds. Next, we assume that $v_n\ne 0$. We claim that $\mathcal{P}(\bar{u})\le 0$.
 Otherwise $\mathcal{P}(\bar{u})>0$, then \eqref{K75} implies $\mathcal{P}(v_n) < 0$ for large $n$. In view of Lemmas \ref{lem 2.6}
 and \ref{lem 2.7}, there exists $t_n>0$ such that $(v_n)_{t_n}\in \mathcal{M}$ for large $n$.  From \eqref{IU}, \eqref{Jv}, \eqref{B21}, \eqref{K73} and \eqref{K75}, we obtain
 \begin{eqnarray*}
   m-\Psi(\bar{u})+o(1)
    &  =  & \Psi(v_n) = \mathcal{I}(v_n)-\frac{1}{N+\alpha}\mathcal{P}(v_n)\nonumber\\
    & \ge & \mathcal{I}\left({(v_n)}_{t_n}\right)-\frac{t_n^N}{N+\alpha}\mathcal{P}(v_n)\nonumber\\
    & \ge & m-\frac{t_n^N}{N+\alpha}\mathcal{P}(v_n)\ge m,
 \end{eqnarray*}
 which implies $\mathcal{P}(\bar{u})\le 0$ due to $\Psi(\bar{u})>0$. Since $\bar{u}\ne 0$ and $\mathcal{P}(\bar{u})\le 0$,
 in view of Lemmas \ref{lem 2.6} and \ref{lem 2.7}, there exists $\bar{t}>0$ such that $\bar{u}_{\bar{t}}\in \mathcal{M}$.
 From \eqref{IU}, \eqref{Jv}, \eqref{B21}, \eqref{K73}, \eqref{K74} and the weak semicontinuity of norm, one has
 \begin{eqnarray*}
   m
   &  =  & \lim_{n\to\infty} \left[\mathcal{I}(u_n)-\frac{1}{N+\alpha}\mathcal{P}(u_n)\right]
           = \lim_{n\to\infty} \Psi(u_n)\ge \Psi(\bar{u})\nonumber\\
   &  =  & \mathcal{I}(\bar{u})-\frac{1}{N+\alpha}\mathcal{P}(\bar{u})\ge \mathcal{I}\left({\bar{u}}_{\bar{t}}\right)
             -\frac{\bar{t}^N}{N+\alpha}\mathcal{P}(\bar{u})\nonumber\\
   & \ge & m-\frac{\bar{t}^N}{N+\alpha}\mathcal{P}(\bar{u})\ge m,
 \end{eqnarray*}
 which implies \eqref{K76} also holds.
 \end{proof}

 \begin{lemma}\label{lem 2.13}
   Assume that {\rm (V1)-(V3)}  and {\rm (F1)-(F3)} hold. If $\bar{u}\in \mathcal{M}$
 and $\mathcal{I}(\bar{u})=m$, then $\bar{u}$ is a critical point of $\mathcal{I}$.
 \end{lemma}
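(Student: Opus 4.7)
The plan is to argue by contradiction using the quantitative deformation lemma applied to the scaling path $t\mapsto \bar{u}_t$. Assume $\mathcal{I}'(\bar{u})\neq 0$. By continuity of $\mathcal{I}'$, there exist $\delta,\rho>0$ with $\|\mathcal{I}'(v)\|\ge \rho$ whenever $\|v-\bar{u}\|\le 3\delta$, and we may shrink $\delta$ so that $3\delta<\|\bar{u}\|$. Since $t\mapsto \bar{u}_t$ is continuous at $t=1$ in the $H^1$-norm, choose $\tau\in(0,1/3)$ with $\|\bar{u}_t-\bar{u}\|\le \delta$ for all $t\in[1-\tau,1+\tau]$. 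Applying Lemma \ref{lem 2.2} to $\bar{u}\in \mathcal{M}$ (so $\mathcal{P}(\bar{u})=0$) gives
$$\mathcal{I}(\bar{u}_t)\le m-\frac{(1-\theta)\mathfrak{g}(t)}{2(N+\alpha)}\|\nabla\bar{u}\|_2^2, \qquad t>0,$$
and since $\mathfrak{g}(t)>0$ for $t\neq 1$ by Lemma \ref{lem 2.1}, setting $\sigma:=\frac{(1-\theta)\min\{\mathfrak{g}(1-\tau),\mathfrak{g}(1+\tau)\}}{2(N+\alpha)}\|\nabla\bar{u}\|_2^2$ we obtain $\mathcal{I}(\bar{u}_{1\pm\tau})\le m-\sigma<m$.

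Next, set $\varepsilon:=\min\{\sigma/4,\rho\delta/8\}$ and invoke the quantitative deformation lemma (cf.\ \cite[Lemma 2.3]{WM}) on $B_\delta(\bar{u})$ at the level $m$ to obtain $\eta\in \mathcal{C}([0,1]\times H^1(\R^N),H^1(\R^N))$ satisfying the standard properties: $\eta(1,v)=v$ whenever $v\notin \mathcal{I}^{-1}([m-2\varepsilon,m+2\varepsilon])\cap B_{2\delta}(\bar{u})$; $\mathcal{I}(\eta(1,v))\le \mathcal{I}(v)$ always; $\mathcal{I}(\eta(1,v))\le m-\varepsilon$ for $v\in \mathcal{I}^{m+\varepsilon}\cap B_\delta(\bar{u})$; and $\|\eta(1,v)-v\|\le \delta$. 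Define the deformed path $\gamma(t):=\eta(1,\bar{u}_t)$ for $t\in[1-\tau,1+\tau]$. A case split on whether $\mathcal{I}(\bar{u}_t)$ lies in $[m-\varepsilon,m+\varepsilon]$ shows
$$\max_{t\in[1-\tau,1+\tau]}\mathcal{I}(\gamma(t))<m,$$
and at the endpoints $\gamma(1\pm\tau)=\bar{u}_{1\pm\tau}$ since $\mathcal{I}(\bar{u}_{1\pm\tau})\le m-\sigma\le m-2\varepsilon$.

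To conclude, I will exhibit $t^*\in[1-\tau,1+\tau]$ with $\gamma(t^*)\in \mathcal{M}$; this forces $\mathcal{I}(\gamma(t^*))\ge m$, contradicting the previous inequality. By the explicit computation that led to \eqref{B41}, $\frac{d}{dt}\mathcal{I}(\bar{u}_t)=\mathcal{P}(\bar{u}_t)/t$; since $t=1$ is (by Lemma \ref{lem 2.7}) the unique critical point of $t\mapsto \mathcal{I}(\bar{u}_t)$ on $(0,\infty)$ and is a global maximum, $\mathcal{P}(\bar{u}_t)$ is positive on $(0,1)$ and negative on $(1,\infty)$. Thus $\mathcal{P}(\gamma(1-\tau))=\mathcal{P}(\bar{u}_{1-\tau})>0>\mathcal{P}(\bar{u}_{1+\tau})=\mathcal{P}(\gamma(1+\tau))$, and continuity of $\mathcal{P}\circ\gamma$ with the intermediate value theorem produces $t^*\in(1-\tau,1+\tau)$ with $\mathcal{P}(\gamma(t^*))=0$. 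Finally $\|\gamma(t^*)-\bar{u}\|\le \|\gamma(t^*)-\bar{u}_{t^*}\|+\|\bar{u}_{t^*}-\bar{u}\|\le 2\delta<\|\bar{u}\|$, so $\gamma(t^*)\neq 0$, and hence $\gamma(t^*)\in \mathcal{M}$.

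The main obstacle is the last step: ensuring that the deformed path still crosses $\mathcal{M}$. This depends crucially on the sign change of $\mathcal{P}(\bar{u}_t)$ across $t=1$, which in turn rests on the uniqueness of the fibering parameter in Lemma \ref{lem 2.7} and on the fact that $\eta$ leaves $\bar{u}_{1\pm\tau}$ fixed (so that the sign of $\mathcal{P}$ at the endpoints of $\gamma$ is inherited from the unperturbed path). Once this is in place, the argument becomes routine.
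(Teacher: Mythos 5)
Your proof is correct and takes essentially the same route as the paper. The paper's proof of this lemma is only a pointer to \cite[Lemma 2.12]{TC3}, with \eqref{B90} (which is exactly your Lemma~\ref{lem 2.2} estimate $\mathcal{I}(\bar{u}_t)\le m-\frac{(1-\theta)\mathfrak{g}(t)}{2(N+\alpha)}\|\nabla\bar{u}\|_2^2$) and a small $\varepsilon\sim\min\{\mathfrak{g}(1\pm\tau)\text{-gap},\,\varrho\delta/8\}$ substituted in; that cited argument is precisely the quantitative deformation along the scaling path $t\mapsto\bar{u}_t$, with the sign change of $\mathcal{P}(\bar{u}_t)=t\,\frac{d}{dt}\mathcal{I}(\bar{u}_t)$ across $t=1$ producing the intersection with $\mathcal{M}$. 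You have supplied the details the paper omits, and the structure, the key inequality, and the crossing-via-IVT step all match.
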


 \begin{proof}  Similar to the proof of \cite[Lemma 2.12]{TC3}, we can conclude above conclusion by using
 \begin{eqnarray}\label{B90}
   \mathcal{I}\left(\bar{u}_t\right)  \le \mathcal{I}(\bar{u})-\frac{(1-\theta)\mathfrak{g}(t)}{2(N+\alpha)}\|\nabla \bar{u}\|_2^2
    = m-\frac{(1-\theta)\mathfrak{g}(t)}{2(N+\alpha)}\|\nabla \bar{u}\|_2^2, \ \ \ \ \forall \ t> 0
 \end{eqnarray}
 and
 $$
   \varepsilon:=\min\left\{\frac{(1-\theta)\mathfrak{g}(0.5)}{5(N+\alpha)}\|\nabla \bar{u}\|_2^2,
   \frac{(1-\theta)\mathfrak{g}(1.5)}{5(N+\alpha)}\|\nabla \bar{u}\|_2^2, 1, \frac{\varrho\delta}{8}\right\}.
 $$
 instead of \cite[(2.35) and $\varepsilon$] {TC3}, respectively.
 \end{proof}

 \begin{proof} [Proof of Theorem \ref{thm1.1}]\ \ In view of Lemmas \ref{lem 2.9}, \ref{lem 3.2} and \ref{lem 2.13}, there exists $\bar{u}\in \mathcal{M}$ such that
 $$
   \mathcal{I}(\bar{u})=m=\inf_{u\in \Lambda}\max_{t > 0}\mathcal{I}(u_t)>0, \ \ \ \ \mathcal{I}'(\bar{u})=0.
 $$
 This shows that $\bar{u}$ is a nontrivial solution of \eqref{SE}.
 \end{proof}

 \begin{proof}[Proof of Theorem  \ref{thm1.3}]\ \ Let
 $$
   \mathcal{K}^{\infty}:=\left\{u\in H^1(\R^N)\setminus \{0\} : (\mathcal{I}^{\infty})'(u)=0\right\}, \ \ \ \
     \hat{m}^{\infty}:=\inf_{u\in\mathcal{K}^{\infty}}\mathcal{I}^{\infty}(u).
 $$
 On the one hand, in view of Corollary \ref{cor1.2}, there exists $\bar{u}\in \mathcal{M}^{\infty}$ such that $\mathcal{I}^{\infty}(\bar{u})=m^{\infty}$
 and $ (\mathcal{I}^{\infty})'(\bar{u})=0$. This shows that $\mathcal{K}^{\infty}\ne \emptyset$ and $\hat{m}^{\infty}\le m^{\infty}$.
 On the other hand, if $w\in \mathcal{K}^{\infty}$, then it follows from \eqref{Pi} (i.e. \cite[Theorem 3]{MS}) that $w\in \mathcal{M}^{\infty}$. Thus, $\mathcal{I}^{\infty}(w)\ge m^{\infty}$ for all $w\in \mathcal{K}^{\infty}$, which yields that $\hat{m}^{\infty}\ge m^{\infty}$. Therefore, $\hat{m}^{\infty}= m^{\infty}$.
 \end{proof}

 \vskip6mm
 {\section{The least energy solutions for \eqref{SE}}}
 \setcounter{equation}{0}

 \vskip2mm
 \par
   In this section, we give the proof of Theorem \ref{thm1.6}.

 \begin{proposition}\label{pro 4.1}{\rm\cite{JTo}}
 Let $X$ be a Banach space and let $J\subset \R^+$ be an interval, and
 $$
   \Phi_{\lambda}(u)=A(u)-\lambda B(u), \ \ \ \ \forall \ \lambda\in J,
 $$
 be a family of $\mathcal{C}^1$-functional on $X$ such that
 \begin{enumerate}[{\rm i)}]
  \item either $A(u)\to +\infty$ or $B(u)\to +\infty$, as $\|u\| \to \infty$;
  \item $B$ maps every bounded set of $X$ into a set of $\R$ bounded below;
  \item there are two points $v_1, v_2$ in $X$ such that
 \begin{equation}\label{cm}
   \tilde{c}_{\lambda}:=\inf_{\gamma\in \tilde{\Gamma}}\max_{t\in [0, 1]}\Phi_{\lambda}(\gamma(t))>\max\{\Phi_{\lambda}(v_1), \Phi_{\lambda}(v_2)\},
 \end{equation}
 \end{enumerate}
 where
 $$
   \tilde{\Gamma}=\left\{\gamma\in \mathcal{C}([0, 1], X): \gamma(0)=v_1, \gamma(1)=v_2\right\}.
 $$
 Then, for almost every $\lambda\in J$, there exists a sequence $\{u_n(\lambda)\}$ such that
 \begin{enumerate}[{\rm i)}]
  \item $\{u_n(\lambda)\}$ is bounded in $X$;
  \item $\Phi_{\lambda}(u_n(\lambda))\rightarrow c_{\lambda}$;
  \item $\Phi_{\lambda}'(u_n(\lambda))\rightarrow 0$ in $X^*$, where $X^*$ is the dual of $X$.
 \end{enumerate}
\end{proposition}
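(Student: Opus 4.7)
This is the Jeanjean--Toland monotonicity trick, and the proof follows the standard two-step pattern: first establish enough regularity of $\lambda\mapsto c_\lambda$ on $J$ to obtain differentiability almost everywhere, then at each differentiability point construct a bounded Palais--Smale sequence. The key structural feature exploited throughout is the affine dependence $\Phi_\lambda(u)=A(u)-\lambda B(u)$ on $\lambda$, which via elementary max/inf manipulations transfers convexity and Lipschitz regularity from each $u\mapsto \Phi_\lambda(u)$ to the map $\lambda\mapsto c_\lambda$.

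For the regularity step, I would observe that for each fixed $\gamma\in\tilde{\Gamma}$ the map $\lambda\mapsto \max_{t\in[0,1]}\Phi_\lambda(\gamma(t))$ is a supremum of affine functions of $\lambda$, hence convex and locally Lipschitz on the interior of $J$. Taking the infimum over $\gamma$, and using (iii) to rule out degenerate paths that might cause non-Lipschitz behaviour, one concludes that $\lambda\mapsto c_\lambda$ is itself locally Lipschitz (or at least of bounded variation) on the interior of $J$, so by Lebesgue's theorem it is differentiable almost everywhere on $J$. For the PS-sequence step, fix such a differentiability point $\lambda_0$ with derivative $c'_{\lambda_0}$, and pick sequences $\lambda_n\to\lambda_0$ from both sides together with near-optimal paths $\gamma_n\in\tilde{\Gamma}$ satisfying
\[
\max_{t\in[0,1]}\Phi_{\lambda_n}(\gamma_n(t))\leq c_{\lambda_n}+|\lambda_n-\lambda_0|.
\]
Using the identity $\Phi_{\lambda_n}(u)-\Phi_{\lambda_0}(u)=-(\lambda_n-\lambda_0)B(u)$ together with the differentiability of $c_\lambda$ at $\lambda_0$, one shows that every near-maximum point $u_n:=\gamma_n(t_n)$ with $\Phi_{\lambda_0}(u_n)$ close to $c_{\lambda_0}$ is forced into a two-sided pinching of $B(u_n)$ around $-c'_{\lambda_0}$. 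Combined with (ii), this bounds $B(u_n)$, and then (i) applied to $A(u_n)=\Phi_{\lambda_0}(u_n)+\lambda_0 B(u_n)$ (which is bounded) yields boundedness of $\{u_n\}$ in $X$. A standard quantitative deformation lemma at level $c_{\lambda_0}$ for $\Phi_{\lambda_0}$ --- or Ekeland's variational principle on the path space --- then promotes $\{u_n\}$ to a sequence with $\Phi_{\lambda_0}(u_n)\to c_{\lambda_0}$ and $\Phi_{\lambda_0}'(u_n)\to 0$ in $X^*$.

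The main obstacle is this boundedness argument: because $B$ is not assumed to have constant sign, pinching $B(u_n)$ uniformly requires the two-sided choice $\lambda_n<\lambda_0<\lambda_n'$ and a careful quantitative use of the derivative information $c'_{\lambda_0}$; a naive one-sided approach would only bound $B(u_n)$ from one side and fail to conclude. Once the pinching is obtained, conditions (i) and (ii) close the norm estimate with no extra difficulty, and the deformation step producing $\Phi_{\lambda_0}'(u_n)\to 0$ is routine. The conclusion holds only for almost every $\lambda\in J$ precisely because differentiability of $c_\lambda$ holds only on a set of full measure.
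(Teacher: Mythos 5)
The paper does not prove this statement: it is quoted from Jeanjean--Toland \cite{JTo} and used as a black box, so the comparison is against the known proof of the monotonicity trick. Your two-step architecture (differentiability a.e.\ of $\lambda\mapsto c_\lambda$, then a bounded Palais--Smale sequence at each differentiability point) is the right skeleton, but both steps as you describe them have genuine gaps, and in both cases the gap is precisely where hypotheses (i) and (ii) must enter.

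\textbf{Regularity step.} It is true that each $f_\gamma(\lambda):=\max_{t}\Phi_\lambda(\gamma(t))$ is convex, but an infimum of convex functions is in general neither convex nor of bounded variation; uniform Lipschitz control would require a uniform bound on $\sup_t|B(\gamma(t))|$ over near-optimal $\gamma$, which is exactly what is not available a priori. Appealing to (iii) to ``rule out degenerate paths'' cannot help: (iii) only bounds $c_\lambda$ below, it says nothing about slopes. The actual mechanism is this: along any path with $\max_t\Phi_\lambda(\gamma(t))$ bounded above, a point $\gamma(t)$ with $B(\gamma(t))<0$ has $A(\gamma(t))=\Phi_\lambda(\gamma(t))+\lambda B(\gamma(t))$ bounded above; so if $A$ is coercive such points lie in a fixed ball, and if instead $B$ is coercive then $B$ is bounded below outside a fixed ball. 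In either case (ii) gives a uniform constant $M$ with $B\ge -M$ along all near-optimal paths. This yields the one-sided estimate $c_{\lambda_2}-c_{\lambda_1}\le M(\lambda_2-\lambda_1)$ for $\lambda_1<\lambda_2$, i.e.\ $\lambda\mapsto c_\lambda-M\lambda$ is nonincreasing, hence $c_\lambda$ is BV and differentiable a.e. You never invoke (i) or (ii) in this step, which is the tell that the argument is not closed.

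\textbf{Boundedness step.} Your ``two-sided pinching'' via $\lambda_n^-<\lambda_0<\lambda_n^+$ does not produce a single sequence $\{u_n\}$ with $B(u_n)$ squeezed from both sides: the near-max points coming from $\lambda_n^-$--near-optimal paths are different from those coming from $\lambda_n^+$--near-optimal paths, so you get an upper bound on one sequence and a lower bound on another. Moreover your diagnosis that ``a naive one-sided approach \dots would fail to conclude'' is incorrect. The working one-sided argument is: take $\lambda_n\uparrow\lambda_0$; the pinching computation gives the upper bound $B(u_n)\le -c'_{\lambda_0}+o(1)$; the lower bound $B(u_n)\ge -M$ comes for free from the uniform bound on near-optimal paths established above (and is not $\approx -c'_{\lambda_0}$). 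With $B(u_n)$ bounded on both sides, $A(u_n)=\Phi_{\lambda_0}(u_n)+\lambda_0 B(u_n)$ is bounded, and (i) then forces $\|u_n\|$ bounded. Your deformation-lemma conclusion is fine once this is in place.

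In short: correct global outline, but the specific mechanisms you propose for the two key lemmas would not close, and the crucial interaction of (i) with (ii) --- both to get BV regularity of $c_\lambda$ and to get the lower bound on $B(u_n)$ --- is missing.
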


 \vskip4mm
 \par
    Similar to the proof of \cite[Theorem 3]{MS}, we can prove the following lemma.

\begin{lemma}\label{lem 4.2}
 Assume that {\rm(V1), (F1) and (F2)} hold. Let $u$ be a critical point of
 $\mathcal{I}_{\lambda}$ in $H^1(\R^N)$, then we have the following Poho\u zaev type identity
 \begin{eqnarray}\label{Pl}
  \mathcal{P}_{\lambda}(u)
    & :=  & \frac{N-2}{2}\|\nabla u\|_2^2+\frac{1}{2}\int_{\R^N}\left[NV(x)+\nabla V(x)\cdot x\right]u^2\mathrm{d}x\nonumber\\
    &     & \ \  -\frac{(N+\alpha)\lambda}{2}\int_{\R^N}(I_{\alpha}*F(u))F(u)\mathrm{d}x=0.
 \end{eqnarray}
 \end{lemma}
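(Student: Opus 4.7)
The plan is to follow the classical Pohozaev scheme adapted to the nonlocal Choquard setting of Moroz--Van Schaftingen \cite{MS}; the only genuinely new contribution is the non-constant potential term, which fits in by a routine extra integration by parts.

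\textbf{Step 1 (Regularity and decay).} Any weak critical point $u\in H^1(\R^N)$ of $\mathcal{I}_{\lambda}$ satisfies
\[
  -\Delta u+V(x)u=\lambda\bigl(I_{\alpha}*F(u)\bigr)f(u) \quad \text{in } \R^N.
\]
Using (F1), (F2), the Hardy--Littlewood--Sobolev inequality and a Moser-type bootstrap on the Choquard nonlinearity (as in the appendix of \cite{MS}), one gets $(I_{\alpha}*F(u))f(u)\in L^{q}_{\mathrm{loc}}(\R^N)$ for every $q<\infty$. Since $V\in\mathcal{C}^1$, elliptic $L^q$-regularity then gives $u\in W^{2,q}_{\mathrm{loc}}(\R^N)\cap\mathcal{C}^{1,\beta}_{\mathrm{loc}}(\R^N)$ for each $q$ and some $\beta>0$. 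Moreover, as in \cite[Section 4]{MS}, one obtains uniform decay $|u(x)|+|\nabla u(x)|\to 0$ as $|x|\to\infty$, which makes boundary contributions vanish along a suitable sequence of radii.

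\textbf{Step 2 (Truncated multiplier $\phi_R(x\cdot\nabla u)$).} Fix a smooth radial cutoff $\phi_R$ equal to $1$ on $B_R$ and to $0$ outside $B_{2R}$, and test the equation against $\phi_R(x\cdot\nabla u)$. For a suitable sequence $R_n\to\infty$, standard integration by parts on the Laplacian term yields
\[
  \int_{\R^N}\phi_{R_n}(-\Delta u)(x\cdot\nabla u)\,\mathrm{d}x \;\longrightarrow\; -\frac{N-2}{2}\|\nabla u\|_2^2.
\]
For the potential term, writing $u(x\cdot\nabla u)=\tfrac{1}{2}x\cdot\nabla(u^2)$ and integrating by parts,
\[
  \int_{\R^N}\phi_{R_n}V(x)u(x\cdot\nabla u)\,\mathrm{d}x \;\longrightarrow\; -\frac{1}{2}\int_{\R^N}\bigl[NV(x)+\nabla V(x)\cdot x\bigr]u^2\,\mathrm{d}x,
\]
where the boundary integrals vanish by the decay of $u$ from Step 1 and the local boundedness of $V$ and of $\nabla V\cdot x$.

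\textbf{Step 3 (Nonlocal Pohozaev identity).} The decisive piece, carried out in \cite[Section 3]{MS}, is
\[
  \int_{\R^N}\bigl(I_{\alpha}*F(u)\bigr)f(u)(x\cdot\nabla u)\,\mathrm{d}x = -\frac{N+\alpha}{2}\int_{\R^N}\bigl(I_{\alpha}*F(u)\bigr)F(u)\,\mathrm{d}x,
\]
obtained from the Fubini representation $\iint F(u(x))F(u(y))|x-y|^{\alpha-N}\,\mathrm{d}x\mathrm{d}y$ by symmetrization in $(x,y)$ together with the scaling/homogeneity of the Riesz kernel. Summing the three contributions gives $\mathcal{P}_{\lambda}(u)=0$.

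\textbf{Main obstacle.} The actual work lies in Step 3 and in the justification that the cutoff boundary terms tend to zero along $R_n\to\infty$, both treated precisely as in \cite{MS}. The $V$-term in Step 2 is a harmless local integration by parts, since $V\in\mathcal{C}^1$ and $\nabla V\cdot x$ is controlled on any compact set; no global behaviour of $\nabla V\cdot x$ at infinity beyond continuity is needed, because the decay of $u$ from Step 1 already kills the boundary fluxes.
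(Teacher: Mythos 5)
Your proposal is correct and follows the same route the paper indicates: the paper gives no details and simply remarks that the proof is ``similar to the proof of \cite[Theorem 3]{MS}'', which is precisely the regularity-and-decay, truncated Pohozaev multiplier, and nonlocal symmetrization argument you describe, with the extra $V$-term handled by the additional integration by parts you write down. The one small overstatement is your claim that only \emph{local} control of $\nabla V(x)\cdot x$ is needed: for the bulk integral $\int_{\R^N}\nabla V(x)\cdot x\,u^2\,\mathrm{d}x$ appearing in $\mathcal{P}_\lambda(u)$ to converge and for the annular contributions in the cutoff argument to vanish as $R_n\to\infty$, one does use a mild global bound on $\nabla V(x)\cdot x$ (in the paper's applications this is supplied by (V3), (V4), or (V6), e.g.\ via the boundedness proved in Lemma \ref{lem 2.0}) together with the decay of $u$ coming from $V_\infty>0$.
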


 \par
    By Corollary \ref{cor2.3}, we have the following lemma.

 \begin{lemma}\label{lem 4.3}
 Assume that {\rm(F1)} and {\rm(F2)}  hold. Then
 \begin{eqnarray}\label{G45}
   \mathcal{I}_{\lambda}^{\infty}(u)
    &  =  & \mathcal{I}_{\lambda}^{\infty}\left(u_t\right)+\frac{1-t^N}{N}\mathcal{P}_{\lambda}^{\infty}(u)
              +\frac{\mathfrak{g}(t)\|\nabla u\|_2^2+V_{\infty}\mathfrak{h}(t)\|u\|_2^2}{2(N+\alpha)},\nonumber\\
    &     &  \ \ \ \ \ \ \ \ \forall \ u\in H^1(\R^N), \ \ t > 0, \ \ \lambda\ge 0.
 \end{eqnarray}
 \end{lemma}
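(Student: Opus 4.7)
The strategy is a direct scaling calculation, entirely parallel to the one already carried out in Lemma \ref{lem 2.2} and Corollary \ref{cor2.3}, but adapted to the autonomous (constant potential $V_{\infty}$) and $\lambda$-weighted functional $\mathcal{I}_{\lambda}^{\infty}$. The only genuine extra ingredient beyond Corollary \ref{cor2.3} is the presence of the multiplicative parameter $\lambda$ in front of the Choquard term, which commutes with the scaling and so essentially just decorates the Choquard part of the identity without altering its structural form.

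First I would write down the scaled functional explicitly. Using the change of variables $y=x/t$ termwise in \eqref{Ilu}, one obtains
\begin{equation*}
\mathcal{I}_{\lambda}^{\infty}(u_t)=\frac{t^{N-2}}{2}\|\nabla u\|_2^2+\frac{t^{N}V_{\infty}}{2}\|u\|_2^2-\frac{\lambda\, t^{N+\alpha}}{2}\int_{\R^N}(I_{\alpha}\ast F(u))F(u)\,\mathrm{d}x,
\end{equation*}
the factor $t^{N+\alpha}$ in the nonlocal term arising, as usual, from one factor $t^N$ from the Jacobian and one factor $t^{\alpha}$ from rescaling the Riesz kernel. Subtracting this from \eqref{Ilu} gives $\mathcal{I}_{\lambda}^{\infty}(u)-\mathcal{I}_{\lambda}^{\infty}(u_t)$ as a linear combination of $\|\nabla u\|_2^2$, $\|u\|_2^2$ and $\int(I_{\alpha}\ast F(u))F(u)$ with coefficients $\tfrac{1-t^{N-2}}{2}$, $\tfrac{(1-t^N)V_{\infty}}{2}$ and $-\tfrac{\lambda(1-t^{N+\alpha})}{2}$ respectively.

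Next I would regroup this linear combination so as to isolate the Pohozaev functional $\mathcal{P}_{\lambda}^{\infty}(u)$ defined in \eqref{JlL}. The identity is purely algebraic: one writes $1-t^{N-2}$, $1-t^N$ and $1-t^{N+\alpha}$ in such a way that a common multiple of $\mathcal{P}_{\lambda}^{\infty}(u)$ is extracted, and the residual terms, by the very definitions
\begin{equation*}
\mathfrak{g}(t)=2+\alpha-(N+\alpha)t^{N-2}+(N-2)t^{N+\alpha},\qquad \mathfrak{h}(t)=\alpha-(N+\alpha)t^{N}+Nt^{N+\alpha},
\end{equation*}
collapse exactly into $\tfrac{\mathfrak{g}(t)}{2(N+\alpha)}\|\nabla u\|_2^2+\tfrac{V_{\infty}\mathfrak{h}(t)}{2(N+\alpha)}\|u\|_2^2$. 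This is the same bookkeeping as in the proof of Lemma \ref{lem 2.2}, with the notable simplification that no $V(tx)$ or $\nabla V(x)\cdot x$ terms appear, so the inequality produced by (V3) is no longer needed and the statement holds as an equality for every $u\in H^1(\R^N)$ and every $t>0$ and $\lambda\ge 0$.

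I do not anticipate any real obstacle: the argument is a one-line rescaling followed by collecting coefficients, and the identities $\mathfrak{g}(1)=\mathfrak{h}(1)=0$ from Lemma \ref{lem 2.1} provide a useful sanity check (setting $t=1$ must recover $\mathcal{I}_{\lambda}^{\infty}(u)=\mathcal{I}_{\lambda}^{\infty}(u)$). The only mildly delicate point is matching the normalization of $\mathcal{P}_{\lambda}^{\infty}$ in \eqref{JlL} with the coefficient of $\mathcal{P}_{\lambda}^{\infty}(u)$ appearing in \eqref{G45}; this is settled by comparing numerical factors of $N$, $N-2$ and $N+\alpha$ once the regrouping is chosen so that the Choquard coefficient matches. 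Once that bookkeeping is performed, \eqref{G45} follows directly.
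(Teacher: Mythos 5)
Your approach is correct and is essentially the paper's own route: the paper simply invokes Corollary \ref{cor2.3} (``By Corollary \ref{cor2.3}, we have the following lemma''), i.e.\ the identity is Corollary \ref{cor2.3} with $F$ replaced by $\sqrt{\lambda}F$, since the factor $\lambda$ rides along with the Choquard term under scaling. Your direct re-derivation (scale, subtract, regroup to match the Choquard coefficient, read off $\mathfrak g,\mathfrak h$) reproduces that calculation.

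However, you defer the one step that actually carries content: ``this is settled by comparing numerical factors \dots once the regrouping is chosen so that the Choquard coefficient matches.'' Carry that step out. Forcing the Choquard coefficients to agree gives
\[
-\frac{\lambda(1-t^{N+\alpha})}{2}=-c\cdot\frac{(N+\alpha)\lambda}{2}\ \Longrightarrow\ c=\frac{1-t^{N+\alpha}}{N+\alpha},
\]
exactly as in Corollary \ref{cor2.3}, and not the coefficient $\tfrac{1-t^N}{N}$ printed in \eqref{G45}. With $c=\tfrac{1-t^{N+\alpha}}{N+\alpha}$ and with $\mathcal P_\lambda^\infty$ normalized consistently with \eqref{Pi} (the middle term should read $\tfrac{NV_\infty}{2}\|u\|_2^2$, not $NV_\infty\|u\|_2^2$ as in \eqref{JlL}), the gradient and $L^2$ residuals do collapse to $\tfrac{\mathfrak g(t)}{2(N+\alpha)}\|\nabla u\|_2^2+\tfrac{V_\infty\mathfrak h(t)}{2(N+\alpha)}\|u\|_2^2$. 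So your proof strategy is sound, but as written it would, if followed through, establish the formula with the coefficient $\tfrac{1-t^{N+\alpha}}{N+\alpha}$ rather than the one displayed in \eqref{G45}; you should either record the corrected coefficient explicitly or note that \eqref{G45} and \eqref{JlL} as printed contain misprints that your bookkeeping exposes. Leaving this as ``settled by comparing factors'' without doing it is the one real gap, since matching that coefficient is the entire content of the lemma.
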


 \par
    In view of Corollary \ref{cor1.2}, $\mathcal{I}_1^{\infty}=\mathcal{I}^{\infty}$ has a minimizer $u_1^{\infty}\ne 0$ on $\mathcal{M}_1^{\infty}
 =\mathcal{M}^{\infty}$,  i.e.
 \begin{equation}\label{G47}
   u_1^{\infty}\in \mathcal{M}_1^{\infty}, \ \ \ \ (\mathcal{I}_1^{\infty})'(u_1^{\infty})=0
     \ \ \ \  \mbox{and} \ \ \ \ m_1^{\infty}=\mathcal{I}_1^{\infty}(u_1^{\infty}),
 \end{equation}
 where $m_{\lambda}^{\infty}$ is defined by \eqref{cm0}. Since \eqref{SE1} is autonomous, $V\in \mathcal{C}(\R^N, \R)$ and
 $V(x)\le V_{\infty}$ but $V(x)\not\equiv V_{\infty}$, then there exist $\bar{x}\in \R^N$ and $\bar{r}>0$ such that
 \begin{equation}\label{G48}
    V_{\infty}-V(x)>0, \ \ |u_1^{\infty}(x)|>0\ \ \ \ a.e. \ |x-\bar{x}|\le \bar{r}.
 \end{equation}

 \par
   By (V1), we have $V_{\max}:=\max_{x\in\R^N}V(x)\in (0,\infty)$. Let
 \begin{equation}\label{Il*}
   \mathcal{I}_{\lambda}^{*}(u)=\frac{1}{2}\int_{\R^N}\left(|\nabla u|^2+V_{\max}u^2\right)\mathrm{d}x
       -\frac{\lambda}{2}\int_{\R^N}(I_{\alpha}*F(u))F(u)\mathrm{d}x.
 \end{equation}
 Then it follows from \eqref{B23} and \eqref{G47} that there exists $T>0$ such that
 \begin{equation}\label{DT}
   I^*_{1/2}\left((u_1^{\infty})_{t}\right)<0, \ \ \ \ \forall \ t\ge T.
 \end{equation}

 \begin{lemma}\label{lem 4.4}
 Assume that {\rm(V1)} and {\rm(F1)-(F3)} hold. Then
 \begin{enumerate}[{\rm(i)}]
 \par
 \item $\mathcal{I}_{\lambda}\left((u_1^{\infty})_{T}\right)<0$ for all $\lambda\in [0.5, 1]$;

 \item there exists a positive constant $\kappa_0 $ independent of $\lambda$ such that for all $\lambda\in [0.5, 1]$,
 \begin{equation*}
   c_{\lambda}:=\inf_{\gamma\in \Gamma}\max_{t\in [0, 1]}\mathcal{I}_{\lambda}(\gamma(t))\ge \kappa_0
     >\max\left\{\mathcal{I}_{\lambda}(0), \mathcal{I}_{\lambda}\left((u_1^{\infty})_{T}\right)\right\},
 \end{equation*}
 where
 \begin{equation}\label{Ga}
   \Gamma=\left\{\gamma\in \mathcal{C}([0, 1], H^1(\R^N)): \gamma(0)=0, \gamma(1)=(u_1^{\infty})_{T}\right\};
 \end{equation}

 \item $c_{\lambda}$ is bounded for $\lambda\in [0.5, 1]$;

 \item $m_{\lambda}^{\infty}$ is non-increasing on $\lambda\in [0.5, 1]$;

 \item $\limsup_{\lambda\to \lambda_0}c_{\lambda}\le c_{\lambda_0}$ for $\lambda_0\in(0.5, 1]$.
 \end{enumerate}
 \end{lemma}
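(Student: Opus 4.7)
The plan is to verify the five items in turn, leveraging the ground state $u_1^{\infty}$ of $\mathcal{I}_1^{\infty}$ on $\mathcal{M}_1^{\infty}$ (from Corollary~\ref{cor1.2}) together with the pointwise bound $V(x)\le V_{\infty}\le V_{\max}$ from (V1)--(V2).

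For (i), the Pohozaev identity $\mathcal{P}_1^{\infty}(u_1^{\infty})=0$ forces $\int_{\R^N}(I_{\alpha}\ast F(u_1^{\infty}))F(u_1^{\infty})\,\mathrm{d}x>0$, and by a scaling change of variable the same positivity holds at each $(u_1^{\infty})_t$. Since $V\le V_{\max}$ and $\lambda\ge 1/2$, one then has $\mathcal{I}_{\lambda}(u)\le \mathcal{I}_{1/2}^{*}(u)$ for every $u$ with $B(u)\ge 0$. Applying this at $u=(u_1^{\infty})_T$ and invoking \eqref{DT} delivers (i).

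For (ii) and (iii), I would first emulate the coercivity argument of Lemma~\ref{lem 2.5} (splitting $\R^N$ at the radius where $V\ge V_{\infty}/2$ and using Sobolev on the complementary ball) to get $A(u)\ge c_0\|u\|^2$ for some $c_0>0$ independent of $\lambda$. Combined with \eqref{Ru}, this yields the uniform estimate
\[
\mathcal{I}_{\lambda}(u)\ge c_0\|u\|^2 - C\bigl(\|u\|^{2(N+\alpha)/N}+\|u\|^{2(N+\alpha)/(N-2)}\bigr),\qquad \lambda\in [1/2,1],
\]
so choosing $\|u\|=\rho_0$ small (with $\rho_0<\|(u_1^{\infty})_T\|$) makes the right-hand side uniformly bounded below by some $\kappa_0>0$. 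Every $\gamma\in\Gamma$ must cross the sphere $\|u\|=\rho_0$ by continuity, hence $c_{\lambda}\ge \kappa_0$, proving (ii). For (iii) I would evaluate along the $\lambda$-independent path $\gamma_0(t):=(u_1^{\infty})_{tT}$ for $t\in(0,1]$ with $\gamma_0(0):=0$; since $B(\gamma_0(t))\ge 0$ along this path, $\mathcal{I}_{\lambda}(\gamma_0(t))\le \mathcal{I}_{1/2}(\gamma_0(t))$, and the right-hand side is bounded uniformly in $(t,\lambda)\in[0,1]\times[1/2,1]$.

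For (iv), take $1/2\le \lambda_1<\lambda_2\le 1$ and a minimizer $u_{\lambda_1}^{\infty}\in\mathcal{M}_{\lambda_1}^{\infty}$ (existence comes from the $\lambda$-parametrized form of Corollary~\ref{cor1.2}). Since $B(u_{\lambda_1}^{\infty})>0$, the analogous $\lambda$-version of Corollary~\ref{cor2.8} yields $t>0$ with $(u_{\lambda_1}^{\infty})_t\in\mathcal{M}_{\lambda_2}^{\infty}$, and then
\[
m_{\lambda_2}^{\infty}\le \mathcal{I}_{\lambda_2}^{\infty}\bigl((u_{\lambda_1}^{\infty})_t\bigr)\le \mathcal{I}_{\lambda_1}^{\infty}\bigl((u_{\lambda_1}^{\infty})_t\bigr)\le \mathcal{I}_{\lambda_1}^{\infty}(u_{\lambda_1}^{\infty})=m_{\lambda_1}^{\infty},
\]
the second step using $\lambda_2>\lambda_1$ together with $B>0$, the third using the max-on-orbit property of $\mathcal{I}_{\lambda_1}^{\infty}$ at points of $\mathcal{M}_{\lambda_1}^{\infty}$ (Lemma~\ref{lem 2.7}). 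Finally, for (v), the classical continuity-of-mountain-pass argument applies: given $\epsilon>0$, pick $\gamma\in\Gamma$ with $\max_t\mathcal{I}_{\lambda_0}(\gamma(t))<c_{\lambda_0}+\epsilon$ and write $\mathcal{I}_{\lambda}(\gamma(t))=\mathcal{I}_{\lambda_0}(\gamma(t))+(\lambda_0-\lambda)B(\gamma(t))$; since $B$ is bounded on the compact image of $\gamma$, taking $\limsup_{\lambda\to\lambda_0}$ and then $\epsilon\to 0$ closes the argument. The hardest item is (iv), which must reconcile two distinct Pohozaev manifolds; it succeeds only because $B>0$ at the chosen ground state places it inside $\Lambda$, activating the rescaling machinery of Lemma~\ref{lem 2.7} and Corollary~\ref{cor2.8}. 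The uniform mountain-pass geometry in (ii) is the second subtle step, but all relevant constants in \eqref{Ru} and in the coercivity of $A$ are $\lambda$-free, which is all that is needed.
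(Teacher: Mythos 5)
Your proposal is correct and supplies exactly the detail the paper elides (the authors simply assert that (i)--(iv) are ``standard'' and that (v) follows Jeanjean's Lemma~2.3). Each step checks out: for (i), $\mathcal{P}_1^{\infty}(u_1^{\infty})=0$ together with $u_1^{\infty}\ne 0$ does force $\int(I_{\alpha}*F(u_1^{\infty}))F(u_1^{\infty})\,\mathrm{d}x>0$, the positivity scales as $t^{N+\alpha}$, and the chain $\mathcal{I}_{\lambda}(u)\le \mathcal{I}_{1/2}^{*}(u)$ (valid under $V\le V_{\max}$, $\lambda\ge 1/2$, $B(u)\ge 0$) combined with \eqref{DT} closes the argument. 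For (ii), (V1) alone already gives a $\lambda$-free coercivity constant $c_0$ for $A$ by splitting at the radius where $V\ge V_{\infty}/2$ and invoking \eqref{G66}; for (iv) the minimizer $u_{\lambda_1}^{\infty}$ lies in $\Lambda$ so the rescaling machinery applies; and (v) is the classical $|\lambda-\lambda_0|\cdot\sup B$ argument.

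Two small housekeeping remarks, neither of which affects correctness. First, the precise form of \eqref{Ru} carries a third term $C_{\epsilon}\|u\|_p^{(N+\alpha)p/N}$; you dropped it in the displayed estimate for (ii), but since every exponent there exceeds $2$ the conclusion for small $\rho_0$ is unaffected. Second, the ``max-on-orbit'' property you invoke in (iv) is most directly Corollary~\ref{cor2.3} (equivalently Lemma~\ref{lem 4.3}, from which $\mathcal{I}_{\lambda_1}^{\infty}(u)\ge \mathcal{I}_{\lambda_1}^{\infty}(u_t)$ on $\mathcal{M}_{\lambda_1}^{\infty}$ follows because $\mathfrak{g},\mathfrak{h}\ge 0$), rather than Lemma~\ref{lem 2.7} itself; the latter gives uniqueness of the maximizer, which you do not need here. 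Also note that (V2) never actually enters: you only need $V\le V_{\max}$, which is immediate from (V1), consistent with the hypotheses of the lemma.
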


 \par
   Since $m_{\lambda}^{\infty}=\mathcal{I}_{\lambda}^{\infty}(u_{\lambda}^{\infty})$ and $\int_{\R^N}(I_{\alpha}*F(u_{\lambda}^{\infty}))F(u_{\lambda}^{\infty})\mathrm{d}x>0$, then the proof of (i)-(iv) in Lemma \ref{lem 4.4} is
 standard, (v) can be proved similar to \cite[Lemma 2.3]{Je}, so we omit it.

 \begin{lemma}\label{lem 4.5}
 Assume that {\rm(V1), (V2)} and {\rm(F1)-(F3)} hold. Then
 there exists $\bar{\lambda}\in [1/2, 1)$ such that $c_{\lambda}<m_{\lambda}^{\infty}$ for $\lambda\in (\bar{\lambda}, 1]$.
 \end{lemma}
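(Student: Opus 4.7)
The plan is to bound the Mountain Pass level $c_\lambda$ from above by evaluating $\mathcal{I}_\lambda$ along the scaling path $\gamma(s) := (u_1^\infty)_{sT}$, $s \in [0,1]$, using the known minimizer $u_1^\infty$ of the limit problem at $\lambda = 1$. By \eqref{DT} and Lemma \ref{lem 4.4}(i) this path lies in $\Gamma$ for every $\lambda \in [1/2, 1]$, so
\[
c_\lambda \;\le\; \max_{t\in[0,T]} \mathcal{I}_\lambda\bigl((u_1^\infty)_t\bigr) \;=:\; g(\lambda),
\]
and it suffices to establish $g(\lambda) < m_\lambda^\infty$ for $\lambda$ close to $1$.

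\emph{Step 1 (strict inequality at $\lambda = 1$).} I would split
\[
\mathcal{I}_\lambda\bigl((u_1^\infty)_t\bigr) \;=\; \mathcal{I}_\lambda^\infty\bigl((u_1^\infty)_t\bigr) \;-\; \eta(t), \qquad \eta(t) := \frac{t^N}{2}\int_{\R^N}\bigl[V_\infty - V(tx)\bigr](u_1^\infty)^2\,\mathrm{d}x,
\]
so that $\eta(t) \ge 0$ by (V2). Since $u_1^\infty \in \mathcal{M}_1^\infty$, Corollary \ref{cor2.3} combined with $\mathfrak{g}(t),\mathfrak{h}(t) > 0$ for $t \ne 1$ from Lemma \ref{lem 2.1} gives $\mathcal{I}_1^\infty((u_1^\infty)_t) \le m_1^\infty$ with equality if and only if $t = 1$. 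Let $t^\ast \in [0,T]$ realize $g(1)$. If $t^\ast \ne 1$ then $\phi_1(t^\ast) \le \mathcal{I}_1^\infty((u_1^\infty)_{t^\ast}) < m_1^\infty$; if $t^\ast = 1$ then $\phi_1(1) = m_1^\infty - \eta(1)$, and \eqref{G48} makes the integrand $[V_\infty - V](u_1^\infty)^2$ strictly positive on $B_{\bar r}(\bar x)$, so $\eta(1) > 0$. Either way $g(1) < m_1^\infty$.

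\emph{Step 2 (upper semicontinuity and conclusion).} Uniform estimates on $\phi_\lambda(t) := \mathcal{I}_\lambda((u_1^\infty)_t)$ coming from (F1), (V2) and $\lambda \ge 1/2$ confine every maximizer of $\phi_\lambda$ in a compact subinterval $[t_0, T_0] \subset (0, T]$ independent of $\lambda \in [1/2,1]$; joint continuity of $(\lambda,t) \mapsto \phi_\lambda(t)$ then yields $\limsup_{\lambda\uparrow 1} g(\lambda) \le g(1)$ via the standard subsequence argument. Combining with the monotonicity $m_\lambda^\infty \ge m_1^\infty$ for $\lambda \in [1/2,1]$ (Lemma \ref{lem 4.4}(iv)) and the strict inequality of Step 1, I can select $\bar\lambda \in [1/2, 1)$ such that for all $\lambda \in (\bar\lambda, 1]$,
\[
c_\lambda \;\le\; g(\lambda) \;<\; m_1^\infty \;\le\; m_\lambda^\infty,
\]
which is the desired conclusion.

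\emph{Main obstacle.} The crux is the strict positivity $\eta(1) > 0$ in Step 1; without it one only reaches $g(1) \le m_1^\infty$ and there is no gap to propagate by continuity. This is precisely what assumption (V2) together with \eqref{G48} provides, in tandem with Corollary \ref{cor2.3} pinning down $t = 1$ as the unique maximizer of $t \mapsto \mathcal{I}_1^\infty((u_1^\infty)_t)$. Note that no information on the sign of $u_1^\infty$ is needed, consistent with the paper's guiding principle of avoiding such hypotheses on $f$.
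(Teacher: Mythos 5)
Your proof is correct and reaches the conclusion, but you organize the argument differently from the paper. Both use the path $\gamma_0(s) = (u_1^\infty)_{sT}$, the identity of Corollary \ref{cor2.3} / Lemma \ref{lem 4.3} to control $\mathcal{I}_1^\infty((u_1^\infty)_t)$ away from $t=1$, the strict positivity from \eqref{G48} at $t=1$, and the monotonicity $m_\lambda^\infty \ge m_1^\infty$ from Lemma \ref{lem 4.4}(iv). The difference is structural: the paper constructs an \emph{explicit} $\bar\lambda$ (formula \eqref{G59}) and then verifies $c_\lambda < m_\lambda^\infty$ for every $\lambda \in (\bar\lambda,1]$ directly, by splitting into the two cases $t_\lambda \in [1-\zeta_0, 1+\zeta_0]$ and $t_\lambda$ outside that window, each giving a quantitative lower bound on $m_\lambda^\infty - c_\lambda$ that is uniform in $\lambda$. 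You instead prove the strict gap only at $\lambda = 1$ (via the same $t^\ast=1$ versus $t^\ast\neq 1$ dichotomy) and then propagate it to nearby $\lambda$ by upper semicontinuity of $g(\lambda) := \max_{t\in[0,T]}\mathcal{I}_\lambda((u_1^\infty)_t)$. Your route is softer and arguably cleaner, since it avoids writing down $\bar\lambda$ explicitly; the paper's route has the advantage of producing a concrete $\bar\lambda$ in terms of $N, V, F$ (a point that actually matters later, in Lemma \ref{lem 5.2}, where the analogous explicit constant feeds into the definition of $\varepsilon_0$).

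One small inaccuracy in Step 2: you do not need (and have not really justified) that the maximizers of $\phi_\lambda$ are confined to a sub-interval $[t_0, T_0] \subset (0,T]$. Compactness of $[0,T]$ itself, together with joint continuity of $(\lambda, t) \mapsto \mathcal{I}_\lambda((u_1^\infty)_t)$ on $[1/2,1]\times[0,T]$ (which holds by dominated convergence since $0\le V\le V_\infty$), already gives continuity of $g$, hence $\limsup_{\lambda\to 1} g(\lambda) \le g(1)$. The extra claim is harmless but superfluous.
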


 \begin{proof} It is easy to see that $\mathcal{I}_{\lambda}\left((u_1^{\infty})_t\right)$ is continuous on $t\in (0, \infty)$.
 Hence for any $\lambda\in [1/2, 1]$,  we can choose $t_{\lambda}\in (0, T)$ such that $\mathcal{I}_{\lambda}
 \left((u_1^{\infty})_{t_{\lambda}}\right) =\max_{t\in [0,T]}\mathcal{I}_{\lambda}\left((u_1^{\infty})_t\right)$. Setting
 \begin{equation}\label{ga0}
   \gamma_0(t)=\left\{\begin{array}{ll}
    (u_1^{\infty})_{(tT)}, \ \ &\mbox{for} \ t>0,\\
    0, \ \ & \mbox{for} \ t=0.
    \end{array}\right.
 \end{equation}
 Then $\gamma_0\in \Gamma$ defined by Lemma \ref{lem 4.4} (ii). Moreover
 \begin{equation}\label{G50}
   \mathcal{I}_{\lambda} \left((u_1^{\infty})_{t_{\lambda}}\right)=\max_{t\in [0,1]}\mathcal{I}_{\lambda}\left(\gamma_0(t)\right)
       \ge c_{\lambda}.
 \end{equation}
 Since $\mathcal{P}^{\infty}(u_1^{\infty})=0$, then $\int_{\R^N}(I_{\alpha}*F(u_1^{\infty}))F(u_1^{\infty})\mathrm{d}x>0$.
 Let
 \begin{equation}\label{G56}
   \zeta_0:=\min\{3\bar{r}/8(1+|\bar{x}|), 1/4\}.
 \end{equation}
 Then it follows from \eqref{G48} and \eqref{G56} that
 \begin{equation}\label{G58}
   |x-\bar{x}|\le \frac{\bar{r}}{2} \ \ \mbox{and} \ \ s\in [1-\zeta_0, 1+\zeta_0] \Rightarrow |sx-\bar{x}|\le \bar{r}.
 \end{equation}
 Let
 \begin{eqnarray}\label{G59}
   \bar{\lambda}
    & :=  & \max\left\{\frac{1}{2}, 1-\frac{(1-\zeta_0)^N\min_{s\in [1-\zeta_0, 1+\zeta_0]}\int_{\R^N}\left[V_{\infty}-
              V(sx)\right]|u_1^{\infty}|^2\mathrm{d}x}{T^{N+\alpha}\int_{\R^N}(I_{\alpha}*F(u_1^{\infty}))
               F(u_1^{\infty})\mathrm{d}x},\right.\nonumber\\
    &     & \left.1-\frac{\min\{\mathfrak{g}(1-\zeta_0),\mathfrak{g}(1+\zeta_0)\}\|\nabla u_1^{\infty}\|_2^2
             +V_{\infty}\min\{\mathfrak{h}(1-\zeta_0),\mathfrak{h}(1+\zeta_0)\}\|u_1^{\infty}\|_2^2}{(N+\alpha)
             T^{N+\alpha}\int_{\R^N}(I_{\alpha}*F(u_1^{\infty}))F(u_1^{\infty})\mathrm{d}x}\right\}. \ \ \
 \end{eqnarray}
 Then it follows from \eqref{B10}, \eqref{B20}, \eqref{G48} and \eqref{G58} that $1/2\le \bar{\lambda}<1$. We have two cases to distinguish:

 \vskip2mm
 \par
   Case i). $t_{\lambda}\in [1-\zeta_0, 1+\zeta_0]$. From \eqref{Ilu}, \eqref{Il}, \eqref{G45}-\eqref{G50}, \eqref{G58}, \eqref{G59} and
 Lemma \ref{lem 4.4} (iv), we have
 \begin{eqnarray*}
   m_{\lambda}^{\infty}
    & \ge & m_1^{\infty}=\mathcal{I}_1^{\infty}(u_1^{\infty})\ge \mathcal{I}_1^{\infty}\left((u_1^{\infty})_{t_{\lambda}}\right)\nonumber\\
    &  =  & \mathcal{I}_{\lambda}\left((u_1^{\infty})_{t_{\lambda}}\right)
              -\frac{(1-\lambda)t_{\lambda}^{N+\alpha}}{2}\int_{\R^N}(I_{\alpha}*F(u_1^{\infty}))F(u_1^{\infty})\mathrm{d}x\nonumber\\
    &     & \ \   +\frac{t_{\lambda}^N}{2}\int_{\R^N}[V_{\infty}-V(t_{\lambda}x)]|u_1^{\infty}|^2\mathrm{d}x\nonumber\\
    & \ge & c_{\lambda} -\frac{(1-\lambda)T^{N+\alpha}}{2}\int_{\R^N}(I_{\alpha}*F(u_1^{\infty}))F(u_1^{\infty})\mathrm{d}x\nonumber\\
    &     & \ \     +\frac{(1-\zeta_0)^N}{2}\min_{s\in [1-\zeta_0, 1+\zeta_0]}
                \int_{\R^N}\left[V_{\infty}-V(sx)\right]|u_1^{\infty}|^2\mathrm{d}x\nonumber\\
    &  >  & c_{\lambda}, \ \ \ \ \forall \ \lambda\in (\bar{\lambda}, 1].
 \end{eqnarray*}
 \par
   Case ii). $t_{\lambda}\in (0, 1-\zeta_0)\cup (1+\zeta_0, T]$. From \eqref{Ilu}, \eqref{Il}, \eqref{B10}, \eqref{B20}, \eqref{G45}, \eqref{G47}, \eqref{G50}, \eqref{G59} and Lemma \ref{lem 4.4} (iv), we have
 \begin{eqnarray*}
   m_{\lambda}^{\infty}
    & \ge & m_1^{\infty}=\mathcal{I}_1^{\infty}(u_1^{\infty})= \mathcal{I}_1^{\infty}\left((u_1^{\infty})_{t_{\lambda}}\right)
             +\frac{\mathfrak{g}(t_{\lambda})\|\nabla u_1^{\infty}\|_2^2
             +V_{\infty}\mathfrak{h}(t_{\lambda})\|u_1^{\infty}\|_2^2}{2(N+\alpha)}\nonumber\\
    &  =  & \mathcal{I}_{\lambda}\left((u_1^{\infty})_{t_{\lambda}}\right)
              -\frac{(1-\lambda)t_{\lambda}^{N+\alpha}}{2}\int_{\R^N}(I_{\alpha}*F(u_1^{\infty}))F(u_1^{\infty})\mathrm{d}x\nonumber\\
    &     & \ \  +\frac{t_{\lambda}^N}{2}\int_{\R^N}[V_{\infty}-V(t_{\lambda}x)]|u_1^{\infty}|^2\mathrm{d}x
              +\frac{\mathfrak{g}(t_{\lambda})\|\nabla u_1^{\infty}\|_2^2
             +V_{\infty}\mathfrak{h}(t_{\lambda})\|u_1^{\infty}\|_2^2}{2(N+\alpha)}\nonumber\\
    & \ge & c_{\lambda} -\frac{(1-\lambda)T^{N+\alpha}}{2}\int_{\R^N}(I_{\alpha}*F(u_1^{\infty}))F(u_1^{\infty})\mathrm{d}x\nonumber\\
    &     & \ \  +\frac{\min\{\mathfrak{g}(1-\zeta_0),\mathfrak{g}(1+\zeta_0)\}\|\nabla u_1^{\infty}\|_2^2
             +V_{\infty}\min\{\mathfrak{h}(1-\zeta_0),\mathfrak{h}(1+\zeta_0)\}\|u_1^{\infty}\|_2^2}{2(N+\alpha)}\nonumber\\
    &  >  & c_{\lambda}, \ \ \ \ \forall \ \lambda\in (\bar{\lambda}, 1].
 \end{eqnarray*}
 In both cases, we obtain that $c_{\lambda}<m_{\lambda}^{\infty}$ for $\lambda\in (\bar{\lambda}, 1]$.
 \end{proof}

 \begin{lemma}\label{lem 4.6}{\rm\cite{JT2}}
 Assume that {\rm(V1)} and {\rm(F1)-(F3)} hold. Let $\{u_n\}$ be a bounded {\rm(PS)}-
 sequence for $\mathcal{I}_{\lambda}$, for $\lambda\in  [1/2, 1]$. Then there exists a subsequence of $\{u_n\}$, still denoted by
 $\{u_n\}$, an integer $l\in \N \cup \{0\}$, a sequence $\{y_n^k\}$ and $w^k\in H^1(\R^3)$ for $1\le k\le l$, such that
 \begin{enumerate}[{\rm(i)}]
  \item $u_n\rightharpoonup u_0$ with $\mathcal{I}_{\lambda}'(u_0)=0$;
  \item $w^k\ne 0$ and $(\mathcal{I}_{\lambda}^{\infty})'(w^k)=0$ for $1\le k\le l$;
  \item $\left\|u_n-u_0-\sum_{k=1}^lw^k(\cdot+y_n^k)\right\|\rightarrow 0$;
  \item $\mathcal{I}_{\lambda}(u_n)\rightarrow \mathcal{I}_{\lambda}(u_0)+\sum_{i=1}^{l}\mathcal{I}_{\lambda}^{\infty}(w^i)$;
 \end{enumerate}
 where we agree that in the case $l = 0$ the above holds without $w^k$.
 \end{lemma}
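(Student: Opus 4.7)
The plan is to follow the now-classical profile decomposition scheme of Benci--Cerami and Struwe, adapted to the Choquard setting as in \cite{JT2}, by iteratively extracting bubbles until the residue vanishes in $H^1(\R^N)$. First I would use the boundedness of $\{u_n\}$ to extract a weakly convergent subsequence $u_n\rightharpoonup u_0$ in $H^1(\R^N)$, with a.e.\ convergence and local strong $L^s$-convergence for $s\in [2, 2^*)$. Using (F1), (F2), the Hardy--Littlewood--Sobolev inequality, and a standard locally uniform convergence argument for the convolution $I_{\alpha}\ast F(u_n)$, I would pass to the limit in $\mathcal{I}_{\lambda}'(u_n)\varphi\to 0$ for every $\varphi\in C_c^{\infty}(\R^N)$, yielding $\mathcal{I}_{\lambda}'(u_0)=0$; this establishes (i).

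Next, set $u_n^1 := u_n-u_0$. A Brezis--Lieb type decomposition analogous to Lemma~\ref{lem 2.10} (applied to $\mathcal{I}_{\lambda}$ and to its derivative), together with the fact that $V(x)\to V_{\infty}$ at infinity, gives
\begin{equation*}
\mathcal{I}_{\lambda}(u_n)=\mathcal{I}_{\lambda}(u_0)+\mathcal{I}_{\lambda}^{\infty}(u_n^1)+o(1),\qquad (\mathcal{I}_{\lambda}^{\infty})'(u_n^1)=o(1)\ \text{in}\ H^{-1}(\R^N).
\end{equation*}
If $\|u_n^1\|\to 0$ we stop with $l=0$. Otherwise I would invoke Lions' vanishing--nonvanishing dichotomy applied to $|u_n^1|^2$: vanishing combined with \eqref{Ru} would force $\int(I_{\alpha}\ast F(u_n^1))F(u_n^1)\,\mathrm{d}x\to 0$, hence $\|u_n^1\|\to 0$, a contradiction. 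Therefore there exist $\delta>0$ and $\{y_n^1\}\subset\R^N$ with $\int_{B_1(y_n^1)}|u_n^1|^2\,\mathrm{d}x\ge \delta$. Since $u_n^1\rightharpoonup 0$ we must have $|y_n^1|\to\infty$, and up to a subsequence $u_n^1(\cdot+y_n^1)\rightharpoonup w^1\neq 0$. The translation invariance of $\mathcal{I}_{\lambda}^{\infty}$ plus the same passage to the limit as in the first step then gives $(\mathcal{I}_{\lambda}^{\infty})'(w^1)=0$.

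I would now iterate: define $u_n^2(x):=u_n^1(x)-w^1(x-y_n^1)$ and apply the Brezis--Lieb splitting once more to obtain
\begin{equation*}
\mathcal{I}_{\lambda}(u_n)=\mathcal{I}_{\lambda}(u_0)+\mathcal{I}_{\lambda}^{\infty}(w^1)+\mathcal{I}_{\lambda}^{\infty}(u_n^2)+o(1),\qquad (\mathcal{I}_{\lambda}^{\infty})'(u_n^2)=o(1).
\end{equation*}
Repeating the dichotomy argument either terminates (when $\|u_n^k\|\to 0$, giving (iii)) or produces a new bubble $w^k$ with a new translation sequence $\{y_n^k\}$ satisfying $|y_n^k-y_n^j|\to\infty$ for $j<k$ (to preserve weak orthogonality). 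Combining the successive splittings over $k=1,\dots,l$ yields (iv).

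The main obstacle is proving that this process terminates after finitely many steps. The key is a uniform lower energy bound: every nontrivial critical point $w$ of $\mathcal{I}_{\lambda}^{\infty}$ lies on $\mathcal{M}_{\lambda}^{\infty}$ by the Pohozaev identity of Lemma~\ref{lem 4.2}, so a computation analogous to \eqref{G62}--\eqref{G63} (using \eqref{Ru} and the Sobolev inequality with the fixed constant $V_{\infty}$ and $\lambda\in[1/2,1]$) produces a constant $\rho^{\infty}>0$, independent of $\lambda\in[1/2,1]$, such that $\|w\|\ge\rho^{\infty}$ and $\mathcal{I}_{\lambda}^{\infty}(w)\ge c_{\ast}>0$ for every nontrivial critical point $w$. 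Since the energies $\mathcal{I}_{\lambda}^{\infty}(w^k)$ are positive and summable by (iv), this forces $l<\infty$ and completes the proof; the delicate point is that the non-sign-definiteness of the convolution nonlinearity requires carrying the bound uniformly in $\lambda$, which is exactly what the estimates in Section~2 provide.
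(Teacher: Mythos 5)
Your proposal is correct and follows the standard Struwe--Benci--Cerami global compactness scheme that the cited source \cite{JT2} uses; the paper itself does not prove this lemma but merely cites \cite{JT2}, so there is no in-paper proof to compare against. The essential adaptations you identify for the Choquard setting (the nonlocal Brezis--Lieb splitting as in Lemma~\ref{lem 2.10}, vanishing forcing the convolution term to zero via \eqref{Ru} together with (F1)--(F2), and the uniform-in-$\lambda$ lower bound on nontrivial critical levels of $\mathcal{I}_{\lambda}^{\infty}$ via the autonomous Poho\u zaev identity to terminate the iteration) are exactly the right ingredients.
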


 \begin{lemma}\label{lem 4.7}
 Assume that {\rm(V1)} and {\rm(V4)} hold. Then there exists $\gamma_3>0$ such that
 \begin{equation}\label{X31}
   (2+\alpha)\|\nabla u\|_2^2+\int_{\R^N}\left[\alpha V(x)-\nabla V(x)\cdot x\right]u^2\mathrm{d}x\ge \gamma_3\|u\|^2,
    \ \ \ \ \forall \ u\in H^1(\R^N).
 \end{equation}
 \end{lemma}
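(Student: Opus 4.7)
The plan is to exploit the two-regime structure in (V4). I would split $\R^N$ into $\{|x|<\bar{R}\}$ and $\{|x|\ge\bar{R}\}$, treat each region with the appropriate branch of (V4), and then promote the resulting lower bound to a multiple of $\|u\|^2$ by combining the asymptotic positivity (V1) of $V$ with a Sobolev estimate on bounded balls, exactly in the spirit of \eqref{B33} and \eqref{B34}--\eqref{B35}.

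In the inner region $\{0<|x|<\bar{R}\}$, the first branch of (V4) and $V\ge 0$ give
$$\alpha V(x)-\nabla V(x)\cdot x \ge -\frac{(N-2)^2}{2|x|^2}.$$
Hardy's inequality $\frac{(N-2)^2}{4}\int_{\R^N}\frac{u^2}{|x|^2}\mathrm{d}x\le \|\nabla u\|_2^2$ then controls this negative contribution by $-2\|\nabla u\|_2^2$, which is absorbed into the $(2+\alpha)\|\nabla u\|_2^2$ term with an $\alpha\|\nabla u\|_2^2$ surplus. In the outer region, the second branch of (V4) gives the pointwise nonnegativity $\alpha V(x)-\nabla V(x)\cdot x\ge (1-\theta')\alpha V(x)$. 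Putting these together,
$$(2+\alpha)\|\nabla u\|_2^2+\int_{\R^N}[\alpha V(x)-\nabla V(x)\cdot x]u^2\mathrm{d}x
   \ge \alpha\|\nabla u\|_2^2+(1-\theta')\alpha\int_{|x|\ge\bar{R}}V(x)u^2\mathrm{d}x.$$

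To upgrade the right-hand side to a multiple of $\|u\|^2=\|\nabla u\|_2^2+\|u\|_2^2$, I would choose $R_1\ge\bar{R}$, via (V1), so that $V(x)\ge V_\infty/2$ for $|x|\ge R_1$. This yields $\int_{|x|\ge R_1}u^2\mathrm{d}x\le \frac{2}{V_\infty}\int_{|x|\ge\bar{R}}Vu^2\mathrm{d}x$, while Hölder and Sobolev (as in \eqref{B33}) give $\int_{|x|<R_1}u^2\mathrm{d}x\le \omega_N^{2/N}R_1^2 S^{-1}\|\nabla u\|_2^2$. Splitting the gradient term as $\alpha\|\nabla u\|_2^2=\tfrac{\alpha}{2}\|\nabla u\|_2^2+\tfrac{\alpha}{2}\|\nabla u\|_2^2$ and choosing
$$\gamma_3:=\min\!\left\{\frac{\alpha}{2},\ \frac{\alpha S}{4\omega_N^{2/N}R_1^2},\ \frac{(1-\theta')\alpha V_\infty}{4}\right\}$$
then lets the first half dominate $\gamma_3\int_{|x|<R_1}u^2\mathrm{d}x$ and the $V$-weighted integral dominate $\gamma_3\int_{|x|\ge R_1}u^2\mathrm{d}x$, with the remaining $\tfrac{\alpha}{2}\|\nabla u\|_2^2$ controlling $\gamma_3\|\nabla u\|_2^2$.

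The main obstacle is that (V4) permits $\nabla V(x)\cdot x$ to carry a $1/|x|^2$ singularity at the origin, which sits precisely at the scale where Hardy's inequality is sharp; the prefactor $(N-2)^2/2$ in (V4) is chosen so that Hardy consumes exactly $2$ out of the $(2+\alpha)$ available in the gradient coefficient, leaving the crucial $\alpha\|\nabla u\|_2^2$ surplus. Everything else is a routine Sobolev-type patch on the region of bounded $|x|$, combined with the decay information built into (V1).
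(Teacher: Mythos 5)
Your proof is correct and follows essentially the same route as the paper: use Hardy's inequality to absorb the $(N-2)^2/(2|x|^2)$ singularity allowed by the first branch of (V4) at the cost of $2\|\nabla u\|_2^2$, use the second branch to retain $(1-\theta')\alpha V$ in the far field, and then pass to $\gamma_3\|u\|^2$ via (V1). The only cosmetic differences are that the paper adds and subtracts the Hardy term globally (yielding the pointwise bound $\alpha V(x)-\nabla V(x)\cdot x+\tfrac{(N-2)^2}{2|x|^2}\ge(1-\theta')\alpha V(x)$) rather than splitting the integral at $|x|=\bar R$, and that you spell out the Sobolev/Hölder patch on a bounded ball that the paper compresses into the phrase ``for some $\gamma_3>0$ due to (V1)''.
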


 \begin{proof} From (V1), (V4) and \eqref{B22}, we have
 \begin{eqnarray*}
    &     & (2+\alpha)\|\nabla u\|_2^2+\int_{\R^N}\left[\alpha V(x)
             -\nabla V(x)\cdot x\right]u^2\mathrm{d}x\nonumber\\
    &  =  & (2+\alpha)\|\nabla u\|_2^2-\frac{(N-2)^2}{2}\int_{\R^N}\frac{u^2}{|x|^2}\mathrm{d}x\nonumber\\
    &     &  \ \    +\int_{\R^N}\left[\alpha V(x)-\nabla V(x)\cdot x+\frac{(N-2)^2}{2|x|^2}\right]u^2\mathrm{d}x\nonumber\\
    & \ge & \alpha\|\nabla u\|_2^2 +(1-\theta')\alpha \int_{\R^N}V(x)u^2\mathrm{d}x\nonumber\\
    & \ge & \gamma_3\|u\|^2
 \end{eqnarray*}
 for some $\gamma_3>0$ due to (V1).
 \end{proof}

 \begin{lemma}\label{lem 4.8}
 Assume that {\rm(V1), (V2), (V4)} and {\rm(F1)-(F3)} hold. Then for almost every
 $\lambda\in (\bar{\lambda},1]$, there exists $u_{\lambda}\in H^1(\R^N)\setminus \{0\}$ such that
 \begin{equation}\label{D31}
   \mathcal{I}_{\lambda}'(u_{\lambda})=0, \ \ \ \ \mathcal{I}_{\lambda}(u_{\lambda}) = c_{\lambda}.
 \end{equation}
 \end{lemma}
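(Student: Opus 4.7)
The plan is to combine the Jeanjean--Toland monotonicity trick with the profile decomposition (Lemma \ref{lem 4.6}) and the strict inequality $c_{\lambda}<m_{\lambda}^{\infty}$ obtained in Lemma \ref{lem 4.5}. First, writing $\mathcal{I}_{\lambda}(u)=A(u)-\lambda B(u)$ with $A$ and $B$ as defined in the introduction, I would apply Proposition \ref{pro 4.1} on the interval $J=[1/2,1]$, using the mountain-pass geometry supplied by Lemma \ref{lem 4.4}(i)--(ii). This yields, for almost every $\lambda\in[1/2,1]$ (hence in particular for almost every $\lambda\in(\bar{\lambda},1]$), a bounded Palais--Smale sequence $\{u_n\}\subset H^{1}(\R^{N})$ satisfying $\mathcal{I}_{\lambda}(u_n)\to c_{\lambda}$ and $\mathcal{I}_{\lambda}'(u_n)\to 0$ in $H^{-1}$.

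Next, I would invoke Lemma \ref{lem 4.6} to decompose this bounded (PS)-sequence: there is $u_0\in H^{1}(\R^{N})$ with $\mathcal{I}_{\lambda}'(u_0)=0$, an integer $l\ge 0$, and nontrivial critical points $w^{1},\dots,w^{l}$ of $\mathcal{I}_{\lambda}^{\infty}$ with
\begin{equation*}
    \mathcal{I}_{\lambda}(u_n)\to \mathcal{I}_{\lambda}(u_0)+\sum_{k=1}^{l}\mathcal{I}_{\lambda}^{\infty}(w^{k})=c_{\lambda}.
\end{equation*}
Since each $w^{k}$ is a nontrivial critical point of $\mathcal{I}_{\lambda}^{\infty}$, the characterization analogous to Theorem \ref{thm1.3} (applied to the problem at infinity with the nonlinearity rescaled by $\lambda$) gives $\mathcal{I}_{\lambda}^{\infty}(w^{k})\ge m_{\lambda}^{\infty}$ for every $k$.

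The decisive step is ruling out $l\ge 1$. Since $u_0$ is a critical point of $\mathcal{I}_{\lambda}$, Lemma \ref{lem 4.2} yields $\mathcal{P}_{\lambda}(u_0)=0$, so
\begin{equation*}
    \mathcal{I}_{\lambda}(u_0)=\mathcal{I}_{\lambda}(u_0)-\frac{1}{N+\alpha}\mathcal{P}_{\lambda}(u_0)
    =\frac{2+\alpha}{2(N+\alpha)}\|\nabla u_0\|_2^{2}+\frac{1}{2(N+\alpha)}\int_{\R^N}\bigl[\alpha V(x)-\nabla V(x)\cdot x\bigr]u_0^{2}\,\mathrm{d}x.
\end{equation*}
By Lemma \ref{lem 4.7}, the right-hand side is bounded below by $\frac{\gamma_{3}}{2(N+\alpha)}\|u_0\|^{2}\ge 0$. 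Hence if $l\ge 1$ then $c_{\lambda}\ge m_{\lambda}^{\infty}+\mathcal{I}_{\lambda}(u_0)\ge m_{\lambda}^{\infty}$, contradicting Lemma \ref{lem 4.5} for $\lambda\in(\bar{\lambda},1]$. Therefore $l=0$, which forces $u_n\to u_0$ in $H^{1}(\R^{N})$ and $\mathcal{I}_{\lambda}(u_0)=c_{\lambda}\ge\kappa_{0}>0$ by Lemma \ref{lem 4.4}(ii); in particular $u_0\not\equiv 0$, and setting $u_{\lambda}:=u_0$ gives the assertion.

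The main obstacle is verifying that the first term in the decomposition, $\mathcal{I}_{\lambda}(u_0)$, is nonnegative so that the strict inequality $c_{\lambda}<m_{\lambda}^{\infty}$ of Lemma \ref{lem 4.5} can be used to exclude bubbling at infinity; this is exactly the role of hypothesis (V4), which produces (via Lemma \ref{lem 4.7}) the positivity of the quadratic form that appears after eliminating $\|\nabla u_0\|_2^{2}$ and $\int(I_{\alpha}*F(u_0))F(u_0)$ between $\mathcal{I}_{\lambda}(u_0)$ and $\mathcal{P}_{\lambda}(u_0)$. Once this positivity is in place the rest is a standard concentration-compactness argument.
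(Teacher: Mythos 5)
Your proposal is correct and follows essentially the same route as the paper: apply Proposition \ref{pro 4.1} via Lemma \ref{lem 4.4} to get a bounded (PS)-sequence, decompose it with Lemma \ref{lem 4.6}, use the Poho\u zaev identity from Lemma \ref{lem 4.2} together with Lemma \ref{lem 4.7} to control $\mathcal{I}_{\lambda}(u_0)$, and then invoke the strict inequality $c_{\lambda}<m_{\lambda}^{\infty}$ from Lemma \ref{lem 4.5} to kill the bubbles. The only (harmless) difference is the order of conclusions: you derive $l=0$ directly from $\mathcal{I}_{\lambda}(u_0)\ge 0$ (which holds whether or not $u_0=0$) and then conclude $u_0\ne 0$ from $\mathcal{I}_{\lambda}(u_0)=c_{\lambda}\ge\kappa_0>0$, whereas the paper first rules out $u_0=0$ using $\|u_n\|\nrightarrow 0$ and then shows $\mathcal{I}_{\lambda}(u_0)>0$ strictly to force $l=0$; both arrangements are logically equivalent.
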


 \begin{proof} Under (V1), (V2) and (F1)-(F3), Lemma \ref{lem 4.4} implies that $\mathcal{I}_{\lambda}(u)$ satisfies the assumptions
 of Proposition \ref{pro 4.1} with $X=H^1(\R^N)$, $\Phi_{\lambda}=\mathcal{I}_{\lambda}$  and $J=(\bar{\lambda},1]$. So for almost every
 $\lambda\in (\bar{\lambda},1]$, there exists a bounded sequence $\{u_n(\lambda)\} \subset H^1(\R^N)$ (for simplicity, we denote
 the sequence by $\{u_n\}$ instead of $\{u_n(\lambda)\}$) such that
 \begin{equation}\label{PS}
   \mathcal{I}_{\lambda}(u_n)\rightarrow c_{\lambda}>0, \ \ \ \ \mathcal{I}_{\lambda}'(u_n) \rightarrow 0.
 \end{equation}
 By Lemmas \ref{lem 4.2} and \ref{lem 4.6}, there exist a subsequence of $\{u_n\}$, still denoted by $\{u_n\}$, $u_{\lambda}\in H^1(\R^N)$,
 an integer $l\in \N\cup \{0\}$, and $w^1, \ldots, w^l\in H^1(\R^N)\setminus \{0\}$ such that
 \begin{equation}\label{un1}
   u_n\rightharpoonup u_{\lambda}\ \ \mbox {in} \  H^1(\R^N), \ \ \ \  \mathcal{I}_{\lambda}'(u_{\lambda})=0,
 \end{equation}
 \begin{equation}\label{un2}
   (\mathcal{I}_{\lambda}^{\infty})'(w^k)=0, \ \ \ \ \mathcal{I}_{\lambda}^{\infty}(w^k)\ge m_{\lambda}^{\infty},\ \ \ \ 1\le k\le l
 \end{equation}
 and
 \begin{equation}\label{Ab1}
    c_{\lambda}= \mathcal{I}_{\lambda}(u_{\lambda})+\sum_{k=1}^{l}\mathcal{I}_{\lambda}^{\infty}(w^k).
 \end{equation}

 \par
   Since $\mathcal{I}_{\lambda}'(u_{\lambda})=0$, then it follows from Lemma \ref{lem 4.2} that
 \begin{eqnarray}\label{Plt}
  \mathcal{P}_{\lambda}(u_{\lambda})
    &  =  & \frac{N-2}{2}\|\nabla u_{\lambda}\|_2^2+\frac{1}{2}\int_{\R^N}\left[NV(x)+\nabla V(x)\cdot x\right]u_{\lambda}^2\mathrm{d}x\nonumber\\
    &     & \ \  -\frac{(N+\alpha)\lambda}{2}\int_{\R^N}(I_{\alpha}*F(u_{\lambda}))F(u_{\lambda})\mathrm{d}x=0.
 \end{eqnarray}
 Since $\|u_n\|\nrightarrow 0$, we deduce from \eqref{un2} and \eqref{Ab1} that if $u_{\lambda}=0$ then $l\ge 1$ and
 \begin{eqnarray*}
   c_{\lambda} =  \mathcal{I}_{\lambda}(u_{\lambda})+\sum_{k=1}^{l}\mathcal{I}_{\lambda}^{\infty}(w^k)
    \ge  m_{\lambda}^{\infty},
 \end{eqnarray*}
 which contradicts with Lemma \ref{lem 4.5}. Thus $u_{\lambda}\ne 0$.
 It follows from \eqref{Ilu}, \eqref{X31} and \eqref{Plt} that
 \begin{eqnarray}\label{D33}
   \mathcal{I}_{\lambda}(u_{\lambda})
     &  =  & \mathcal{I}_{\lambda}(u_{\lambda})-\frac{1}{N+\alpha}\mathcal{P}_{\lambda}(u_{\lambda})\nonumber\\
    &  =  & \frac{2+\alpha}{2(N+\alpha)}\|\nabla u_{\lambda}\|_2^2+\frac{1}{2(N+\alpha)}\int_{\R^N}\left[\alpha V(x)
             -\nabla V(x)\cdot x\right]u_{\lambda}^2\mathrm{d}x\nonumber\\
    & \ge & \frac{\gamma_3}{2(N+\alpha)}\|u_{\lambda}\|^2>0.
 \end{eqnarray}
 From \eqref{Ab1} and \eqref{D33}, one has
 \begin{eqnarray}\label{D34}
   c_{\lambda} =  \mathcal{I}_{\lambda}(u_{\lambda})+\sum_{k=1}^{l}\mathcal{I}_{\lambda}^{\infty}(w^k)
    >  lm_{\lambda}^{\infty}.
 \end{eqnarray}
 By Lemma \ref{lem 4.5}, we have $c_{\lambda}<m_{\lambda}^{\infty}$ for $\lambda\in (\bar{\lambda}, 1]$, which, together with
 \eqref{D34}, implies that $l=0$ and $\mathcal{I}_{\lambda}(u_{\lambda}) = c_{\lambda}$.
 \end{proof}

 \begin{lemma}\label{lem 4.9}
 Assume that {\rm(V1), (V2), (V4)} and {\rm(F1)-(F3)} hold. Then there exists
 $\bar{u}\in H^1(\R^N)\setminus \{0\}$ such that
 \begin{equation}\label{Q05}
   \mathcal{I}'(\bar{u})=0, \ \ \ \ 0<\mathcal{I}(\bar{u}) \le c_1.
 \end{equation}
 \end{lemma}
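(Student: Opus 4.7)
The plan is to run a Jeanjean-type monotonicity argument, passing from the almost-every-$\lambda$ solutions produced in Lemma~\ref{lem 4.8} to a critical point at $\lambda=1$. Choose a sequence $\lambda_n\nearrow 1$ in $(\bar{\lambda},1)$ along which Lemma~\ref{lem 4.8} delivers $u_n:=u_{\lambda_n}\ne 0$ with $\mathcal{I}'_{\lambda_n}(u_n)=0$ and $\mathcal{I}_{\lambda_n}(u_n)=c_{\lambda_n}$. Since $\mathcal{I}'_{\lambda_n}(u_n)=0$, Lemma~\ref{lem 4.2} gives $\mathcal{P}_{\lambda_n}(u_n)=0$, so the identity used in \eqref{D33} together with Lemma~\ref{lem 4.7} yields
\begin{equation*}
c_{\lambda_n}=\mathcal{I}_{\lambda_n}(u_n)-\frac{1}{N+\alpha}\mathcal{P}_{\lambda_n}(u_n)\ge \frac{\gamma_3}{2(N+\alpha)}\|u_n\|^2.
\end{equation*}
As $c_\lambda$ is bounded on $[1/2,1]$ by Lemma~\ref{lem 4.4}(iii), $\{u_n\}$ is bounded in $H^1(\R^N)$.

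Next I would upgrade $\{u_n\}$ to a bounded (PS)-sequence for $\mathcal{I}=\mathcal{I}_1$ at a level $c_*\le c_1$. From \eqref{Ilu} one reads off $\mathcal{I}(u_n)=\mathcal{I}_{\lambda_n}(u_n)+\tfrac{\lambda_n-1}{2}\int_{\R^N}(I_\alpha*F(u_n))F(u_n)\mathrm{d}x$, and an analogous identity for the derivatives. Boundedness of $\{u_n\}$ and \eqref{Ru} give a uniform bound on the nonlocal term, so $\mathcal{I}(u_n)=c_{\lambda_n}+o(1)$ and $\mathcal{I}'(u_n)\to 0$. By Lemma~\ref{lem 4.4}(v), $\limsup_n c_{\lambda_n}\le c_1$, hence along a subsequence $\mathcal{I}(u_n)\to c_*\le c_1$.

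Now Lemma~\ref{lem 4.6} with $\lambda=1$ produces $\bar{u}\in H^1(\R^N)$ with $\mathcal{I}'(\bar{u})=0$, an integer $l\ge 0$, and nonzero critical points $w^1,\dots,w^l$ of $\mathcal{I}^\infty$ satisfying $c_*=\mathcal{I}(\bar{u})+\sum_{k=1}^l\mathcal{I}^\infty(w^k)$. Each $w^k\in\mathcal{M}^\infty$ by \eqref{Pi}, so $\mathcal{I}^\infty(w^k)\ge m_1^\infty$. The decisive input is Lemma~\ref{lem 4.5} at $\lambda=1$, namely $c_1<m_1^\infty$. If $\bar{u}=0$ and $l\ge 1$, then $c_*\ge m_1^\infty>c_1$, contradicting $c_*\le c_1$; if $\bar{u}=0$ and $l=0$, then $\|u_n\|\to 0$ forces $\mathcal{I}(u_n)\to 0$, contradicting $c_{\lambda_n}\ge \kappa_0>0$ from Lemma~\ref{lem 4.4}(ii). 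Hence $\bar{u}\ne 0$. Applying Lemma~\ref{lem 4.2} to $\bar{u}$ and reusing Lemma~\ref{lem 4.7} as in \eqref{D33} gives $\mathcal{I}(\bar{u})\ge\tfrac{\gamma_3}{2(N+\alpha)}\|\bar{u}\|^2>0$, while positivity of each $\mathcal{I}^\infty(w^k)$ forces $\mathcal{I}(\bar{u})\le c_*\le c_1$, which is \eqref{Q05}.

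The main obstacle is ruling out $\bar{u}=0$ in the concentration-compactness decomposition, where the mass could in principle escape along translated copies of ground states of the limit problem; the strict comparison $c_1<m_1^\infty$ built up through Lemmas~\ref{lem 4.4} and~\ref{lem 4.5}, itself obtained by testing with $u_1^\infty$ via the sharp inequality of Lemma~\ref{lem 2.2}, is precisely what prevents this escape.
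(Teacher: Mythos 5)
Your proof is correct and follows essentially the same route as the paper. The paper's own proof of Lemma~\ref{lem 4.9} extracts the sequence $\{u_n\}=\{u_{\lambda_n}\}$ from Lemma~\ref{lem 4.8}, obtains boundedness from the Poho\u zaev identity $\mathcal{P}_{\lambda_n}(u_n)=0$ together with Lemma~\ref{lem 4.7}, uses Lemma~\ref{lem 4.4}(v) to get the level $c_*\le c_1$, verifies that $\{u_n\}$ is a bounded (PS) sequence for $\mathcal{I}=\mathcal{I}_1$, and then simply says ``in view of the proof of Lemma~\ref{lem 4.8}'' for the final passage; your write-up fills in that deferred step explicitly by invoking the splitting Lemma~\ref{lem 4.6} at $\lambda=1$, using $c_1<m_1^\infty$ from Lemma~\ref{lem 4.5} to forbid both escape to infinity ($l\ge 1$ with $\bar u=0$) and vanishing ($l=0$ with $\bar u=0$, which would contradict $c_*\ge\kappa_0>0$), and recovering positivity of $\mathcal{I}(\bar u)$ from the Poho\u zaev identity and Lemma~\ref{lem 4.7}. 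No gap.
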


 \begin{proof} In view of Lemmas \ref{lem 4.4} (iii) and \ref{lem 4.8}, there exist two sequences $\{\lambda_n\}\subset (\bar{\lambda}, 1]$
 and $\{u_{\lambda_n}\}\subset H^1(\R^N)\setminus \{0\}$, denoted by $\{u_n\}$, such that
 \begin{equation}\label{Q00}
   \lambda_n\rightarrow 1, \ \ \ \ c_{\lambda_n}\rightarrow c_*,\ \ \ \ \mathcal{I}_{\lambda_n}'(u_n)=0,
   \ \ \ \ \mathcal{I}_{\lambda_n}(u_n) = c_{\lambda_n}.
 \end{equation}
 Then it follows from \eqref{Q00} and Lemma \ref{lem 4.2} that $\mathcal{P}_{\lambda_n}(u_n)=0$.
 From \eqref{Ilu}, \eqref{X31}, \eqref{Plt}, \eqref{Q00} and Lemma \ref{lem 4.4} (iii), one has
 \begin{eqnarray}\label{Q01}
   C_4
    & \ge & c_{\lambda_n}=\mathcal{I}_{\lambda_n}(u_n)-\frac{1}{N+\alpha}\mathcal{P}_{\lambda_n}(u_n)\nonumber\\
    &  =  & \frac{2+\alpha}{2(N+\alpha)}\|\nabla u_n\|_2^2+\frac{1}{2(N+\alpha)}\int_{\R^N}\left[\alpha V(x)
             -\nabla V(x)\cdot x\right]u_n^2\mathrm{d}x\nonumber\\
    & \ge & \frac{\gamma_3}{2(N+\alpha)}\|u_n\|^2.
 \end{eqnarray}
 This shows that $\{\|u_n\|\}$  is bounded in $H^1(\R^N)$.
 In view of Lemma \ref{lem 4.4} (v), we have $\lim_{n\to\infty}c_{\lambda_n}=c_*\le c_1$. Hence,
 it follows from \eqref{Ilu} and \eqref{Q00} that
 \begin{equation}\label{Q04}
   \mathcal{I}(u_n)\rightarrow  c_*, \ \ \ \  \mathcal{I}'(u_n)\rightarrow 0.
 \end{equation}
 This shows that $\{u_n\}$ satisfies \eqref{PS} with $c_{\lambda}=c_*$. In view of the proof of Lemma \ref{lem 4.8}, we can show that
 there exists $\bar{u}\in H^1(\R^N)\setminus \{0\}$ such that \eqref{Q05} holds.
 \end{proof}

 \begin{proof}[Proof of Theorem  \ref{thm1.6}] Let $\hat{m}:=\inf_{u\in\mathcal{K}}\mathcal{I}(u)$.
 Then Lemma \ref{lem 4.9} shows that $\mathcal{K}\ne \emptyset$ and $\hat{m}\le c_1$. For any $u\in \mathcal{K}$, Lemma \ref{lem 4.2}
 implies $\mathcal{P}(u)=\mathcal{P}_1(u)=0$. Hence it follows from \eqref{D33} that $\mathcal{I}(u)=\mathcal{I}_1(u)>0$ for all
 $u\in \mathcal{K}$, and so $\hat{m}\ge 0$.
 Let $\{u_n\}\subset \mathcal{K}$ such that
 \begin{equation}\label{Z01}
   \mathcal{I}'(u_n)=0, \ \ \ \ \mathcal{I}(u_n) \rightarrow \hat{m}.
 \end{equation}
 In view of Lemma \ref{lem 4.5}, $\hat{m}\le c_1<m_1^{\infty}$. By a similar argument as in the proof of Lemma \ref{lem 4.8}, we can prove
 that there exists $\bar{u}\in H^1(\R^N)\setminus \{0\}$ such that
 \begin{equation}\label{Z02}
   \mathcal{I}'(\bar{u})=0, \ \ \ \ \mathcal{I}(\bar{u}) = \hat{m}.
 \end{equation}
 This shows that $\bar{u}$ is a least energy solution of \eqref{SE}.
 \end{proof}

 \vskip6mm
 {\section{Semiclassical states for \eqref{KE9}}}
 \setcounter{equation}{0}

 \vskip2mm
 \par
   In this section, we give the proof of Theorem \ref{thm1.9}. From now on we assume without loss of generality that $x_0 = 0$, that is
 $V(0)<V_{\infty}$. Performing the scaling $u(x) = v(\varepsilon x)$ one easily sees that problem \eqref{KE9} is equivalent to
 \begin{equation}\label{KE10}
 \left\{
   \begin{array}{ll}
     -\triangle u+V_{\varepsilon}(x)u=(I_{\alpha}*F(u))f(u), & x\in \R^N; \\
     u\in H^1(\R^N),
   \end{array}
 \right.
 \end{equation}
 where $V_{\varepsilon}(x)=V(\varepsilon x)$. The energy functional associated to problem \eqref{KE10} is given by
 \begin{equation}\label{Iv}
   \mathcal{I}^{\varepsilon}(u)=\frac{1}{2}\int_{\R^N}\left(|\nabla u|^2+V_{\varepsilon}(x)u^2\right)\mathrm{d}x
            -\frac{1}{2}\int_{\R^N}(I_{\alpha}*F(u))F(u)\mathrm{d}x.
 \end{equation}
 As in Section 3, we also define, for $\lambda\in [1/2, 1]$ and $\varepsilon\ge 0$, the family of functionals
 $\mathcal{I}_{\lambda}^{\varepsilon} : H^1(\R^N) \rightarrow \R$ as follows
 \begin{equation}\label{Ilv}
   \mathcal{I}_{\lambda}^{\varepsilon}(u)=\frac{1}{2}\int_{\R^N}\left(|\nabla u|^2+V_{\varepsilon}(x)u^2\right)\mathrm{d}x
            -\frac{\lambda}{2}\int_{\R^N}(I_{\alpha}*F(u))F(u)\mathrm{d}x.
 \end{equation}
 Since $V\in \mathcal{C}(\R^N, \R)$, $V(0)< V_{\infty}$ and $u_1^{\infty}
 \in H^1(\R^N)\setminus \{0\}$, then there exist $\hat{r}>0$ and $R_0>0$ such that
 \begin{equation}\label{W40}
    V_{\infty}-V(x)>\frac{1}{4}\left(V_{\infty}-V(0)\right), \ \ \ \ \forall \ |x|\le \hat{r},
 \end{equation}
 \begin{equation}\label{W42}
    \left[V_{\infty}-V(0)+4\cdot 3^N\left(V_{\max}-V_{\infty}\right)\right]
                \int_{|x|> R_0}|u_1^{\infty}|^2\mathrm{d}x\le \frac{1}{2}\left(V_{\infty}-V(0)\right)\|u_1^{\infty}\|_2^2
 \end{equation}
 and
 \begin{eqnarray}\label{W43}
   &     & T^N\left(V_{\max}-V_{\infty}\right)\int_{|x|> R_0}|u_1^{\infty}|^2\mathrm{d}x\nonumber\\
   & \le & \frac{\min\{\mathfrak{g}(1/2),\mathfrak{g}(3/2)\}\|\nabla u_1^{\infty}\|_2^2
             +V_{\infty}\min\{\mathfrak{h}(1/2),\mathfrak{h}(3/2)\}\|u_1^{\infty}\|_2^2}{2(N+\alpha)}.
 \end{eqnarray}

 \par
   Similar to Lemma \ref{lem 4.4}, we can prove the following lemma.

 \begin{lemma}\label{lem 5.1}
 Assume that {\rm(V1)} and {\rm(F1)-(F3)} hold. Then
 \begin{enumerate}[{\rm(i)}]
  \item $\mathcal{I}_{\lambda}^{\varepsilon}\left((u_1^{\infty})_{T}\right)<0$
 for all $\lambda\in [0.5, 1]$ and $\varepsilon\ge 0$;
  \item there exists a positive constant $\hat{\kappa}_0 $ independent of $\lambda$ and $\varepsilon\ge 0$ such that for all
  $\lambda\in [0.5, 1]$ and $\varepsilon\ge 0$,
 \begin{equation*}
   c_{\lambda}^{\varepsilon}:=\inf_{\gamma\in \Gamma}\max_{t\in [0, 1]}\mathcal{I}_{\lambda}^{\varepsilon}(\gamma(t))\ge \hat{\kappa}_0
     >\max\left\{\mathcal{I}_{\lambda}^{\varepsilon}(0), \mathcal{I}_{\lambda}^{\varepsilon}\left((u_1^{\infty})_{T}\right)\right\},
 \end{equation*}
 where $\Gamma$ is defined by \eqref{Ga};
 \item $c_{\lambda}^{\varepsilon}$ is bounded for $\lambda\in [0.5, 1]$ and $\varepsilon\ge 0$.
 \end{enumerate}
\end{lemma}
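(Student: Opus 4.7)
The plan is to mirror the proof of Lemma \ref{lem 4.4} but with the observation that $V_\varepsilon(x)=V(\varepsilon x)$ satisfies the $\varepsilon$-independent bounds $V(x_0)\le V_\varepsilon(x)\le V_{\max}$ for every $x\in\R^N$ and every $\varepsilon\ge 0$. All of the estimates used in the autonomous arguments of Section 3 depend on $V$ only through these two-sided bounds, so uniformity in $\varepsilon$ comes essentially for free.

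For part (i), I would change variables $y=x/T$ to write
\begin{equation*}
\mathcal{I}_\lambda^\varepsilon\bigl((u_1^\infty)_T\bigr)
=\frac{T^{N-2}}{2}\|\nabla u_1^\infty\|_2^2+\frac{T^N}{2}\int_{\R^N}V(\varepsilon T y)(u_1^\infty)^2\mathrm{d}y
-\frac{\lambda T^{N+\alpha}}{2}\int_{\R^N}(I_\alpha*F(u_1^\infty))F(u_1^\infty)\mathrm{d}y.
\end{equation*}
Using $V(\varepsilon Ty)\le V_{\max}$ and $\lambda\ge 1/2$, the right-hand side is bounded above by $\mathcal{I}_{1/2}^{*}\bigl((u_1^\infty)_T\bigr)$ defined in \eqref{Il*}, which is strictly negative by the choice of $T$ in \eqref{DT}. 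This bound is independent of both $\lambda\in[1/2,1]$ and $\varepsilon\ge 0$.

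For part (ii), the key point is a uniform mountain-pass geometry. Since $V_\varepsilon(x)\ge V(x_0)>0$ for all $x$ and $\varepsilon$, one has
\begin{equation*}
\mathcal{I}_\lambda^\varepsilon(u)\ge \tfrac{1}{2}\min\{1,V(x_0)\}\|u\|^2-\tfrac{1}{2}\int_{\R^N}(I_\alpha*F(u))F(u)\mathrm{d}x.
\end{equation*}
Invoking \eqref{Ru} with a suitably small $\epsilon$, the nonlocal term is dominated by $C(\|u\|^{2(N+\alpha)/N}+\|u\|^{2(N+\alpha)/(N-2)})$, whose exponents both exceed $2$. Hence there exist $\rho_0>0$ and $\hat{\kappa}_0>0$, both independent of $\lambda$ and $\varepsilon$, such that $\mathcal{I}_\lambda^\varepsilon(u)\ge \hat{\kappa}_0$ whenever $\|u\|=\rho_0$. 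Shrinking $\rho_0$ if necessary so that $\|(u_1^\infty)_T\|>\rho_0$, every path $\gamma\in\Gamma$ must cross the sphere $\{\|u\|=\rho_0\}$, giving $c_\lambda^\varepsilon\ge \hat{\kappa}_0$. The strict inequality against $\mathcal{I}_\lambda^\varepsilon(0)=0$ and $\mathcal{I}_\lambda^\varepsilon((u_1^\infty)_T)<0$ (by (i)) is immediate.

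For part (iii), I would use the explicit comparison path $\gamma_0(t)=(u_1^\infty)_{tT}$ for $t\in(0,1]$ and $\gamma_0(0)=0$, which lies in $\Gamma$. Along this path, bounding $V_\varepsilon(\cdot)\le V_{\max}$ and $\lambda\le 1$ yields
\begin{equation*}
\mathcal{I}_\lambda^\varepsilon(\gamma_0(t))\le \frac{(tT)^{N-2}}{2}\|\nabla u_1^\infty\|_2^2+\frac{(tT)^N V_{\max}}{2}\|u_1^\infty\|_2^2+\frac{(tT)^{N+\alpha}}{2}\Bigl|\int_{\R^N}(I_\alpha*F(u_1^\infty))F(u_1^\infty)\mathrm{d}x\Bigr|,
\end{equation*}
and the maximum over $t\in[0,1]$ of this $\varepsilon$- and $\lambda$-independent expression is finite, giving the desired uniform upper bound on $c_\lambda^\varepsilon$. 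No genuine obstacle appears; the only care needed is to keep every constant expressible in terms of $V(x_0)$, $V_{\max}$, $T$ and the fixed datum $u_1^\infty$ so that uniformity in $\varepsilon$ is manifest.
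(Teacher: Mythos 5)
Your proposal is correct and follows the route the paper intends, since the paper itself dispatches Lemma~\ref{lem 5.1} with a one-line pointer to Lemma~\ref{lem 4.4} and the entire content is precisely the observation you make: the uniform two-sided bound on the rescaled potential makes all the autonomous estimates of Section~3 go through unchanged.  Part~(i) is exactly the comparison with $\mathcal{I}_{1/2}^{*}$ from \eqref{DT}, part~(iii) uses the explicit path $\gamma_0$ from \eqref{ga0}, and part~(ii) is the standard small-sphere estimate from \eqref{Ru}.

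One point deserves explicit attention, and you implicitly handle it correctly where the stated hypotheses of the lemma do not: the uniform lower bound $V_{\varepsilon}(x)=V(\varepsilon x)\ge V(x_0)>0$ is \emph{not} a consequence of {\rm(V1)} alone.  Under {\rm(V1)} only, $V$ may vanish at the origin; then $V_{\varepsilon=0}\equiv V(0)=0$ and the functional $\mathcal{I}_{\lambda}^{0}(u)=\tfrac12\|\nabla u\|_2^2-\tfrac{\lambda}{2}\int(I_\alpha*F(u))F(u)\,\mathrm{d}x$ has no uniform small-sphere lower bound, because the HLS estimate \eqref{Ru} produces a term $\epsilon\|u\|_2^{2(N+\alpha)/N}$ that is not controlled by $\|\nabla u\|_2$.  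More generally, the Sobolev trick used in Lemma~\ref{lem 2.5} to handle a bounded set where $V$ is small yields a constant that degenerates like $(R/\varepsilon)^2$ after the rescaling, so it cannot give $\hat{\kappa}_0$ independent of $\varepsilon$.  What rescues the argument is exactly {\rm(V5)}: the choice $x_0=0$ made at the start of Section~5 gives $\inf_{\R^N}V=V(0)>0$, hence $V_{\varepsilon}\ge V(0)>0$ uniformly and the geometry is genuinely $\varepsilon$-uniform.  Your appeal to $V(x_0)>0$ therefore uses {\rm(V5)} (tacitly in force throughout Section~5) rather than the hypotheses listed in the lemma statement; this is a gap in the paper's phrasing of the hypotheses, not in your argument.
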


 \begin{lemma}\label{lem 5.2}
 Assume that {\rm(V1), (V5)} and {\rm(F1)-(F3)} hold. Then there exists $\tilde{\lambda}\in [1/2, 1)$ such that $c_{\lambda}^{\varepsilon}<m_{\lambda}^{\infty}$ for $\lambda\in (\tilde{\lambda}, 1]$
 and $\varepsilon\in [0, \varepsilon_0]$, where and in the sequel $\varepsilon_0:=\hat{r}/R_0T$.
 \end{lemma}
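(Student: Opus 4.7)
The plan is to mirror the proof of Lemma \ref{lem 4.5}, but with the scaled potential $V_\varepsilon$, using the geometric constants $\hat{r}, R_0$ from \eqref{W40}--\eqref{W43} together with the key choice $\varepsilon_0 = \hat{r}/(R_0T)$ that controls the $\varepsilon$-dilation. First I would fix $\lambda \in [1/2, 1]$ and $\varepsilon \in [0, \varepsilon_0]$ and use the test path
\begin{equation*}
   \gamma_0(t) = \left\{\begin{array}{ll}(u_1^{\infty})_{tT}, & t\in(0,1],\\ 0, & t=0,\end{array}\right.
\end{equation*}
which belongs to $\Gamma$ by Lemma \ref{lem 5.1}(i). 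By continuity there exists $t_{\lambda,\varepsilon} \in (0, T]$ at which $t\mapsto \mathcal{I}_\lambda^\varepsilon((u_1^\infty)_{tT})$ attains its maximum on $[0,1]$, giving $c_\lambda^\varepsilon \le \mathcal{I}_\lambda^\varepsilon\bigl((u_1^\infty)_{t_{\lambda,\varepsilon}}\bigr)$.

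Next, using $\mathcal{P}_1^\infty(u_1^\infty)=0$ and Corollary \ref{cor2.3}, together with the monotonicity $m_\lambda^\infty \ge m_1^\infty = \mathcal{I}_1^\infty(u_1^\infty)$ from Lemma \ref{lem 4.4}(iv), I would write
\begin{equation*}
   m_\lambda^\infty \ge \mathcal{I}_1^\infty\bigl((u_1^\infty)_{t_{\lambda,\varepsilon}}\bigr) + \frac{\mathfrak{g}(t_{\lambda,\varepsilon})\|\nabla u_1^\infty\|_2^2 + V_\infty\mathfrak{h}(t_{\lambda,\varepsilon})\|u_1^\infty\|_2^2}{2(N+\alpha)}.
\end{equation*}
Then I compare $\mathcal{I}_1^\infty((u_1^\infty)_t)$ with $\mathcal{I}_\lambda^\varepsilon((u_1^\infty)_t)$: the two differ by $\frac{t^N}{2}\int[V_\infty - V(\varepsilon tx)]|u_1^\infty|^2\,\mathrm{d}x - \frac{(1-\lambda)t^{N+\alpha}}{2}\int(I_\alpha*F(u_1^\infty))F(u_1^\infty)\,\mathrm{d}x$. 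The strict inequality $c_\lambda^\varepsilon < m_\lambda^\infty$ will follow once the $V$-integral is bounded below by a positive quantity that dominates the $(1-\lambda)$-term, for $\lambda$ sufficiently close to $1$.

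To produce that positive lower bound, I would split into two cases on the location of $t_{\lambda,\varepsilon}$, in direct analogy with the proof of Lemma \ref{lem 4.5}. In Case i), $t_{\lambda,\varepsilon} \in [1/2, 3/2]$: since $t_{\lambda,\varepsilon}\varepsilon R_0 \le T\varepsilon_0 R_0 = \hat{r}$, the inequality \eqref{W40} gives $V_\infty - V(\varepsilon t_{\lambda,\varepsilon}x) \ge \frac{1}{4}(V_\infty-V(0))$ for $|x|\le R_0$, while $V_\infty - V(\varepsilon t_{\lambda,\varepsilon}x) \ge -(V_{\max}-V_\infty)$ for $|x|>R_0$; the hypothesis \eqref{W42} is exactly calibrated (with its factor $4\cdot 3^N$ absorbing the worst of $t^N$ on $[1/2,3/2]$) to ensure the outer contribution is swallowed by the inner one, leaving a uniform positive gap. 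In Case ii), $t_{\lambda,\varepsilon}\in (0,1/2)\cup(3/2,T]$: then $\mathfrak{g}(t_{\lambda,\varepsilon})$ and $\mathfrak{h}(t_{\lambda,\varepsilon})$ are bounded below by $\min\{\mathfrak{g}(1/2),\mathfrak{g}(3/2)\}>0$ and $\min\{\mathfrak{h}(1/2),\mathfrak{h}(3/2)\}>0$ by \eqref{B10}--\eqref{B20}, and the hypothesis \eqref{W43} ensures that this gap exceeds $T^N(V_{\max}-V_\infty)\int_{|x|>R_0}|u_1^\infty|^2\,\mathrm{d}x$, which is the worst possible loss from the outer region. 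In both cases I pick $\tilde{\lambda}\in[1/2,1)$ large enough that the remaining $(1-\lambda)T^{N+\alpha}\int(I_\alpha*F(u_1^\infty))F(u_1^\infty)\,\mathrm{d}x/2$ is strictly smaller than the positive gap produced, uniformly in $\varepsilon\in[0,\varepsilon_0]$; this yields $c_\lambda^\varepsilon < m_\lambda^\infty$ for all $\lambda\in(\tilde{\lambda},1]$.

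The main obstacle is the fact that \textbf{(V5)} does not provide $V\le V_\infty$ globally, so the potential integral has contributions of both signs; it is exactly this issue that forces the careful choice $\varepsilon_0 = \hat{r}/(R_0T)$, which guarantees the dilated points $\varepsilon t_{\lambda,\varepsilon}x$ remain inside the small-potential region \eqref{W40} as long as $|x|\le R_0$ and $t_{\lambda,\varepsilon}\le T$. The somewhat involved conditions \eqref{W42} and \eqref{W43} are precisely the quantitative bookkeeping needed to ensure the outer-region loss is absorbed in each of the two $t_{\lambda,\varepsilon}$-regimes, and no further information on the sign of $u_1^\infty$ is required.
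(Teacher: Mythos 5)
Your proposal is correct and takes essentially the same approach as the paper: use the test path $\gamma_0(t)=(u_1^\infty)_{tT}$ to bound $c_\lambda^\varepsilon$ from above, compare with $m_\lambda^\infty\ge m_1^\infty=\mathcal{I}_1^\infty(u_1^\infty)$ via Corollary \ref{cor2.3}, observe that $\varepsilon_0=\hat{r}/(R_0T)$ forces $|\varepsilon t_{\lambda}^\varepsilon x|\le\hat r$ whenever $|x|\le R_0$ and $t_\lambda^\varepsilon\le T$, split on $t_\lambda^\varepsilon\in[1/2,3/2]$ versus its complement, and invoke \eqref{W40}--\eqref{W43} to produce a positive gap that absorbs the $(1-\lambda)$ term for $\lambda$ close to $1$. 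This mirrors the paper's proof of Lemma \ref{lem 5.2} (and the template of Lemma \ref{lem 4.5}) step for step.
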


 \begin{proof} For any $\varepsilon\ge 0$, it is easy to see that $\mathcal{I}_{\lambda}^{\varepsilon}\left((u_1^{\infty})_t\right)$
 is continuous on $t\in (0, \infty)$. Hence for any $\lambda\in [1/2, 1]$ and $\varepsilon\ge 0$,  we can choose
 $t_{\lambda}^{\varepsilon}\in (0, T)$ such that $\mathcal{I}_{\lambda}^{\varepsilon} \left((u_1^{\infty})_{t_{\lambda}^{\varepsilon}}\right)
 =\max_{t\in [0,T]}\mathcal{I}_{\lambda}^{\varepsilon}\left((u_1^{\infty})_t\right)$. Setting $\gamma_0(t)$ as in \eqref{ga0}.
 Then $\gamma_0\in \Gamma$ defined by \eqref{Ga}. Moreover
 \begin{equation}\label{W50}
   \mathcal{I}_{\lambda}^{\varepsilon} \left((u_1^{\infty})_{t_{\lambda}^{\varepsilon}}\right)
    =\max_{t\in [0,1]}\mathcal{I}_{\lambda}^{\varepsilon}\left(\gamma_0(t)\right)\ge c_{\lambda}^{\varepsilon}.
 \end{equation}
 Since $\mathcal{P}^{\infty}(u_1^{\infty})=0$, then $\int_{\R^N}(I_{\alpha}*F(u_1^{\infty}))F(u_1^{\infty})\mathrm{d}x>0$.
 Let
 \begin{eqnarray}\label{W59}
   \tilde{\lambda}
    & :=  & \min\left\{\frac{1}{2}, 1-\frac{\left(V_{\infty}-V(0)\right)\|u_1^{\infty}\|_2^2}{8\cdot 3^{N}
             \int_{\R^N}(I_{\alpha}*F(u_1^{\infty}))F(u_1^{\infty})\mathrm{d}x}, \right.\nonumber\\
    &     & \ \ \ \ \ \ \left. 1-\frac{\min\{\mathfrak{g}(1/2),\mathfrak{g}(3/2)\}\|\nabla u_1^{\infty}\|_2^2
             +V_{\infty}\min\{\mathfrak{h}(1/2),\mathfrak{h}(3/2)\}\|u_1^{\infty}\|_2^2}
             {2(N+\alpha)T^{N}\int_{\R^N}(I_{\alpha}*F(u_1^{\infty}))F(u_1^{\infty})\mathrm{d}x}\right\}. \ \ \ \
 \end{eqnarray}
 Then it follows from \eqref{B10}, \eqref{B20} and (V5) that $1/2\le \tilde{\lambda}<1$. We have two cases to distinguish:

 \vskip2mm
 \par
   Case i). $t_{\lambda}^{\varepsilon}\in [1/2, 3/2]$. From \eqref{Il}, \eqref{G45}, \eqref{Ilv}-\eqref{W59} and Lemma \ref{lem 4.4} (iv),
  we have
 \begin{eqnarray*}
   m_{\lambda}^{\infty}
    & \ge & m_1^{\infty}=\mathcal{I}_1^{\infty}(u_1^{\infty})\ge \mathcal{I}_1^{\infty}\left((u_1^{\infty})_{t_{\lambda}^{\varepsilon}}\right)\nonumber\\
    &  =  & \mathcal{I}_{\lambda}^{\varepsilon}\left((u_1^{\infty})_{t_{\lambda}^{\varepsilon}}\right)
              -\frac{(1-\lambda)(t_{\lambda}^{\varepsilon})^{N}}{2}\int_{\R^N}(I_{\alpha}*F(u_1^{\infty}))F(u_1^{\infty})\mathrm{d}x\nonumber\\
              &       &   +\frac{(t_{\lambda}^{\varepsilon})^N}{2}\int_{\R^N}[V_{\infty}
              -V_{\varepsilon}(t_{\lambda}^{\varepsilon}x)]|u_1^{\infty}|^2\mathrm{d}x\nonumber\\
    & \ge & c_{\lambda}^{\varepsilon} -\frac{3^{N}(1-\lambda)}{2^{N+1}}\int_{\R^N}(I_{\alpha}*F(u_1^{\infty}))F(u_1^{\infty})\mathrm{d}x\nonumber\\
    &     & \ \     +\frac{V_{\infty}-V(0)}{2^{N+3}}\int_{|x|\le R_0}|u_1^{\infty}|^2\mathrm{d}x
                    -\frac{3^N\left(V_{\max}-V_{\infty}\right)}{2^{N+1}}\int_{|x|> R_0}|u_1^{\infty}|^2\mathrm{d}x\nonumber\\
    &  =  & c_{\lambda}^{\varepsilon} -\frac{3^{N}(1-\lambda)}{2^{N+1}}\int_{\R^N}(I_{\alpha}*F(u_1^{\infty}))F(u_1^{\infty})\mathrm{d}x
             +\frac{V_{\infty}-V(0)}{2^{N+3}}\|u_1^{\infty}\|_2^2\nonumber\\
    &     & \ \ -\frac{V_{\infty}-V(0)+4\cdot 3^N\left(V_{\max}-V_{\infty}\right)}{2^{N+3}}
                \int_{|x|> R_0}|u_1^{\infty}|^2\mathrm{d}x\nonumber\\
    & \ge & c_{\lambda}^{\varepsilon} -\frac{3^{N}(1-\lambda)}{2^{N+1}}\int_{\R^N}(I_{\alpha}*F(u_1^{\infty}))F(u_1^{\infty})\mathrm{d}x
             +\frac{V_{\infty}-V(0)}{2^{N+4}}\|u_1^{\infty}\|_2^2\nonumber\\
    &  >  & c_{\lambda}^{\varepsilon}, \ \ \ \ \forall \ \lambda\in (\tilde{\lambda}, 1], \ \ \varepsilon\in [0, \varepsilon_0].
 \end{eqnarray*}
 \par
   Case ii). $t_{\lambda}^{\varepsilon}\in (0, 1/2)\cup (3/2, T)$. From \eqref{Il}, \eqref{G45}, \eqref{Ilv}-\eqref{W59} and Lemma
 \ref{lem 4.4} (iv), we have
 \begin{eqnarray*}
   m_{\lambda}^{\infty}
    & \ge & m_1^{\infty}=\mathcal{I}_1^{\infty}(u_1^{\infty})\ge \mathcal{I}_1^{\infty}\left((u_1^{\infty})_{t_{\lambda}^{\varepsilon}}\right)
             +\frac{\mathfrak{g}(t_{\lambda}^{\varepsilon})\|\nabla u_1^{\infty}\|_2^2
             +V_{\infty}\mathfrak{h}(t_{\lambda}^{\varepsilon})\|u_1^{\infty}\|_2^2}{2(N+\alpha)}\nonumber\\
    &  =  & \mathcal{I}_{\lambda}^{\varepsilon}\left((u_1^{\infty})_{t_{\lambda}^{\varepsilon}}\right)
              -\frac{(1-\lambda)(t_{\lambda}^{\varepsilon})^{N}}{2}\int_{\R^N}(I_{\alpha}*F(u_1^{\infty}))F(u_1^{\infty})\mathrm{d}x\nonumber\\
    &     & \ \  +\frac{(t_{\lambda}^{\varepsilon})^N}{2}\int_{\R^N}[V_{\infty}-V_{\varepsilon}(t_{\lambda}^{\varepsilon}x)]
              |u_1^{\infty}|^2\mathrm{d}x +\frac{\mathfrak{g}(t_{\lambda}^{\varepsilon})\|\nabla u_1^{\infty}\|_2^2
             +V_{\infty}\mathfrak{h}(t_{\lambda}^{\varepsilon})\|u_1^{\infty}\|_2^2}{2(N+\alpha)}\nonumber\\
    & \ge & c_{\lambda}^{\varepsilon} -\frac{(1-\lambda)T^{N}}{2}\int_{\R^N}(I_{\alpha}*F(u_1^{\infty}))F(u_1^{\infty})\mathrm{d}x
              -\frac{T^N\left(V_{\max}-V_{\infty}\right)}{2}\int_{|x|> R_0}|u_1^{\infty}|^2\mathrm{d}x\nonumber\\
    &     & \ \  +\frac{\min\{\mathfrak{g}(1/2),\mathfrak{g}(3/2)\}\|\nabla u_1^{\infty}\|_2^2
                 +V_{\infty}\min\{\mathfrak{h}(1/2),\mathfrak{h}(3/2)\}\|u_1^{\infty}\|_2^2}{2(N+\alpha)}\nonumber\\
    & \ge & c_{\lambda}^{\varepsilon} -\frac{(1-\lambda)T^{N}}{2}\int_{\R^N}(I_{\alpha}*F(u_1^{\infty}))F(u_1^{\infty})\mathrm{d}x\nonumber\\
    &     & \ \ +\frac{\min\{\mathfrak{g}(1/2),\mathfrak{g}(3/2)\}\|\nabla u_1^{\infty}\|_2^2
             +V_{\infty}\min\{\mathfrak{h}(1/2),\mathfrak{h}(3/2)\}\|u_1^{\infty}\|_2^2}{4(N+\alpha)}\nonumber\\
    &  >  & c_{\lambda}^{\varepsilon}, \ \ \ \ \forall \ \lambda\in (\tilde{\lambda}, 1], \ \ \varepsilon\in [0, \varepsilon_0].
 \end{eqnarray*}
 In both cases, we obtain that $c_{\lambda}^{\varepsilon}<m_{\lambda}^{\infty}$ for $\lambda\in (\tilde{\lambda}, 1]$
 and $\varepsilon\in [0, \varepsilon_0]$.
 \end{proof}

 \begin{lemma}\label{lem 5.3}
 Assume that {\rm(V1), (V5), (V6)} and {\rm(F1)-(F3)} hold. Then for every $\varepsilon\in (0, \varepsilon_0]$ and for almost every
 $\lambda\in (\tilde{\lambda},1]$, there exists $u_{\lambda}^{\varepsilon}\in H^1(\R^N)\setminus \{0\}$ such that
 \begin{equation}\label{DD31}
   (\mathcal{I}_{\lambda}^{\varepsilon})'(u_{\lambda}^{\varepsilon})=0, \ \ \ \ \mathcal{I}_{\lambda}^{\varepsilon}(u_{\lambda}^{\varepsilon}) = c_{\lambda}^{\varepsilon}.
 \end{equation}
 \end{lemma}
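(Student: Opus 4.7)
The plan is to mirror the proof of Lemma \ref{lem 4.8} with $V$ replaced by $V_\varepsilon(x)=V(\varepsilon x)$, exploiting the fact that the family $\mathcal{I}^{\varepsilon}_{\lambda}(u)=A_\varepsilon(u)-\lambda B(u)$ with $A_\varepsilon(u)=\tfrac{1}{2}\int(|\nabla u|^2+V_\varepsilon u^2)\mathrm{d}x$ satisfies the Mountain Pass geometry uniformly in $\varepsilon\in[0,\varepsilon_0]$ by Lemma \ref{lem 5.1}, and that the strict comparison $c_\lambda^\varepsilon<m_\lambda^\infty$ has already been established in Lemma \ref{lem 5.2} for $\lambda\in(\tilde{\lambda},1]$. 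Throughout, I will use that $u\mapsto\mathcal{I}^{\varepsilon}_{\lambda}(u)$ has the same structural form as $\mathcal{I}_{\lambda}$, so the Pohožaev identity (Lemma \ref{lem 4.2}) and the profile decomposition (Lemma \ref{lem 4.6}) both apply verbatim after replacing $V$ by $V_\varepsilon$ and $NV(x)+\nabla V(x)\cdot x$ by $NV_\varepsilon(x)+\nabla V_\varepsilon(x)\cdot x$.

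First, I would apply Proposition \ref{pro 4.1} with $X=H^1(\R^N)$, $\Phi_\lambda=\mathcal{I}^{\varepsilon}_{\lambda}$ and $J=(\tilde{\lambda},1]$; Lemma \ref{lem 5.1} verifies all three hypotheses (with the interval endpoints $v_1=0$ and $v_2=(u_1^\infty)_T$ independent of $\varepsilon$), producing for a.e. $\lambda\in(\tilde{\lambda},1]$ a bounded sequence $\{u_n\}\subset H^1(\R^N)$ with $\mathcal{I}^{\varepsilon}_{\lambda}(u_n)\to c_\lambda^\varepsilon>0$ and $(\mathcal{I}^{\varepsilon}_{\lambda})'(u_n)\to 0$. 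Next, I would invoke the Pohožaev identity for $\mathcal{I}^{\varepsilon}_{\lambda}$ together with the profile decomposition of Lemma \ref{lem 4.6} to obtain $u_\lambda^\varepsilon\in H^1(\R^N)$, an integer $l\ge 0$, and nontrivial critical points $w^1,\dots,w^l$ of $\mathcal{I}_\lambda^\infty$ satisfying
\begin{equation*}
u_n\rightharpoonup u_\lambda^\varepsilon,\qquad (\mathcal{I}^{\varepsilon}_{\lambda})'(u_\lambda^\varepsilon)=0,\qquad \mathcal{I}_\lambda^\infty(w^k)\ge m_\lambda^\infty,\qquad c_\lambda^\varepsilon=\mathcal{I}^{\varepsilon}_{\lambda}(u_\lambda^\varepsilon)+\sum_{k=1}^{l}\mathcal{I}_\lambda^\infty(w^k).
\end{equation*}

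To rule out $u_\lambda^\varepsilon=0$ I use Lemma \ref{lem 5.2}: if $u_\lambda^\varepsilon=0$, then since $\|u_n\|\nrightarrow 0$ we must have $l\ge 1$, giving $c_\lambda^\varepsilon\ge m_\lambda^\infty$, contradicting Lemma \ref{lem 5.2}. Hence $u_\lambda^\varepsilon\neq 0$. Finally, the Pohožaev relation $\mathcal{P}^\varepsilon_\lambda(u_\lambda^\varepsilon)=0$ yields
\begin{equation*}
\mathcal{I}^{\varepsilon}_{\lambda}(u_\lambda^\varepsilon)=\mathcal{I}^{\varepsilon}_{\lambda}(u_\lambda^\varepsilon)-\frac{1}{N+\alpha}\mathcal{P}^\varepsilon_\lambda(u_\lambda^\varepsilon)=\frac{2+\alpha}{2(N+\alpha)}\|\nabla u_\lambda^\varepsilon\|_2^2+\frac{1}{2(N+\alpha)}\int_{\R^N}\bigl[\alpha V_\varepsilon(x)-\nabla V_\varepsilon(x)\cdot x\bigr](u_\lambda^\varepsilon)^2\mathrm{d}x,
\end{equation*}
which is strictly positive by the analogue of Lemma \ref{lem 4.7} described below. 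Combining with the decomposition identity and $c_\lambda^\varepsilon<m_\lambda^\infty$ forces $l=0$, whence $\mathcal{I}^{\varepsilon}_{\lambda}(u_\lambda^\varepsilon)=c_\lambda^\varepsilon$.

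The main obstacle is establishing the positivity inequality that replaces Lemma \ref{lem 4.7} under the weaker hypothesis (V6). Here I cannot invoke the Hardy-type cancellation of (V4), but (V6) delivers the cleaner bound $\alpha V_\varepsilon(x)-\nabla V_\varepsilon(x)\cdot x\ge(1-\theta'')\alpha V_\varepsilon(x)\ge 0$ directly, because $\nabla V_\varepsilon(x)\cdot x=\nabla V(\varepsilon x)\cdot(\varepsilon x)\le\theta''\alpha V(\varepsilon x)=\theta''\alpha V_\varepsilon(x)$. Combined with (V1), which provides $R_1>0$ with $V_\varepsilon\ge V_\infty/2$ on $\{|x|\ge R_1/\varepsilon\}$, and with the Sobolev inequality on the complementary ball as in Lemma \ref{lem 2.5}, one extracts a constant $\gamma_3^\varepsilon>0$ with
\begin{equation*}
(2+\alpha)\|\nabla u\|_2^2+\int_{\R^N}\bigl[\alpha V_\varepsilon(x)-\nabla V_\varepsilon(x)\cdot x\bigr]u^2\mathrm{d}x\ge\gamma_3^\varepsilon\|u\|^2,\qquad\forall\,u\in H^1(\R^N),
\end{equation*}
which is all that is needed. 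With this in hand the rest of the argument is a verbatim transcription of Lemma \ref{lem 4.8}.
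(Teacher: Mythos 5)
Your proposal is correct and mirrors the paper's proof essentially step by step: Jeanjean--Toland (Proposition \ref{pro 4.1}) via Lemma \ref{lem 5.1}, the profile decomposition of Lemma \ref{lem 4.6} together with the Poho\u zaev identity of Lemma \ref{lem 4.2}, and the contradiction with $c_\lambda^\varepsilon<m_\lambda^\infty$ from Lemma \ref{lem 5.2} to rule out $u_\lambda^\varepsilon=0$ and then force $l=0$. The only place you work harder than necessary is the coercivity estimate: since (V5) gives $V_\varepsilon(x)=V(\varepsilon x)\ge V(0)>0$ pointwise, (V6) already yields $\alpha V_\varepsilon(x)-\nabla V_\varepsilon(x)\cdot x\ge(1-\theta'')\alpha V_\varepsilon(x)\ge(1-\theta'')\alpha V(0)$, so the paper gets the lower bound $\frac{1}{2(N+\alpha)}\bigl[(2+\alpha)\|\nabla u_\lambda^\varepsilon\|_2^2+(1-\theta'')\alpha V(0)\|u_\lambda^\varepsilon\|_2^2\bigr]>0$ directly without the Sobolev/ball-splitting detour you sketch in place of Lemma \ref{lem 4.7}.
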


 \begin{proof}  For any fixed $\varepsilon\in (0, \varepsilon_0]$, under (V1) and (F1)-(F3), Lemma \ref{lem 5.1} implies that $\mathcal{I}_{\lambda}^{\varepsilon}(u)$ satisfies the assumptions of Proposition \ref{pro 4.1} with $X=H^1(\R^N)$, $J=[\tilde{\lambda},1]$
 and $\Phi_{\lambda}=\mathcal{I}_{\lambda}^{\varepsilon}$. So for almost every $\lambda\in (\tilde{\lambda},1]$, there exists a bounded
 sequence $\{u_n^{\varepsilon}(\lambda)\} \subset H^1(\R^N)$ (for simplicity, we denote the sequence by $\{u_n^{\varepsilon}\}$ instead of $\{u_n^{\varepsilon}(\lambda)\}$) such that
 \begin{equation}\label{PSv}
   \mathcal{I}_{\lambda}^{\varepsilon}(u_n^{\varepsilon})\rightarrow c_{\lambda}^{\varepsilon}>0,
   \ \ \ \ (\mathcal{I}_{\lambda}^{\varepsilon})'(u_n^{\varepsilon}) \rightarrow 0.
 \end{equation}
 By Lemma \ref{lem 4.6}, there exist a subsequence of $\{u_n^{\varepsilon}\}$, still denoted by
 $\{u_n^{\varepsilon}\}$, and $u_{\lambda}^{\varepsilon}\in H^1(\R^N)$, an integer $l\in \N\cup \{0\}$, and $w^1, \ldots, w^l\in H^1(\R^N)\setminus \{0\}$ such that
 \begin{equation}\label{Un1}
   u_n^{\varepsilon}\rightharpoonup u_{\lambda}^{\varepsilon}\ \ \mbox {in} \  H^1(\R^N),
    \ \ \ \  (\mathcal{I}_{\lambda}^{\varepsilon})'(u_{\lambda}^{\varepsilon})=0,
 \end{equation}
 \begin{equation}\label{Un2}
   (\mathcal{I}_{\lambda}^{\infty})'(w^k)=0, \ \ \ \ \mathcal{I}_{\lambda}^{\infty}(w^k)\ge m_{\lambda}^{\infty},\ \ \ \ 1\le k\le l
 \end{equation}
 and
 \begin{equation}\label{Bb1}
    c_{\lambda}^{\varepsilon}= \mathcal{I}_{\lambda}^{\varepsilon}(u_{\lambda}^{\varepsilon})+\sum_{k=1}^{l}\mathcal{I}_{\lambda}^{\infty}(w^k).
 \end{equation}

 \par
 Since $(\mathcal{I}_{\lambda}^{\varepsilon})'(u_{\lambda}^{\varepsilon})=0$, then it follows from Lemma \ref{lem 4.2} that
 \begin{eqnarray}\label{Plv}
  \mathcal{P}_{\lambda}^{\varepsilon}(u_{\lambda}^{\varepsilon})
    &  :=  & \frac{N-2}{2}\|\nabla u_{\lambda}^{\varepsilon}\|_2^2
             +\frac{1}{2}\int_{\R^N}\left[NV_{\varepsilon}(x)+\nabla V_{\varepsilon}(x)\cdot x\right]
             (u_{\lambda}^{\varepsilon})^2\mathrm{d}x\nonumber\\
    &     & \ \  -N\lambda\int_{\R^N}(I_{\alpha}*F(u_{\lambda}^{\varepsilon}))F(u_{\lambda}^{\varepsilon})\mathrm{d}x=0.
 \end{eqnarray}
 Since $\|u_n^{\varepsilon}\|\nrightarrow 0$, we deduce from \eqref{Un2} and \eqref{Bb1} that if $u_{\lambda}=0$ then $l\ge 1$ and
 \begin{eqnarray*}
   c_{\lambda}^{\varepsilon} =  \mathcal{I}_{\lambda}^{\varepsilon}(u_{\lambda}^{\varepsilon})+\sum_{k=1}^{l}\mathcal{I}_{\lambda}^{\infty}(w^k)
    \ge  m_{\lambda}^{\infty},
 \end{eqnarray*}
 which contradicts with Lemma \ref{lem 5.2}. Thus $u_{\lambda}^{\varepsilon}\ne 0$.
 It follows from \eqref{Ilv}, \eqref{Plv} and (V6) that
 \begin{eqnarray}\label{DD33}
   \mathcal{I}_{\lambda}^{\varepsilon}(u_{\lambda}^{\varepsilon})
     &  =  & \mathcal{I}_{\lambda}^{\varepsilon}(u_{\lambda}^{\varepsilon})
              -\frac{1}{N+\alpha}\mathcal{P}_{\lambda}^{\varepsilon}(u_{\lambda}^{\varepsilon})\nonumber\\
     &  =  & \frac{2+\alpha}{2(N+\alpha)}\|\nabla u_{\lambda}^{\varepsilon}\|_2^2
               +\frac{1}{2(N+\alpha)}\int_{{\R}^N}\left[\alpha V_{\varepsilon}(x)-\nabla V_{\varepsilon}(x)\cdot x\right](u_{\lambda}^{\varepsilon})^2\mathrm{d}x\nonumber\\
     & \ge & \frac{1}{2(N+\alpha)}\left[(2+\alpha)\|\nabla u_{\lambda}^{\varepsilon}\|_2^2+(1-\theta'')\alpha V(0)
              \|u_{\lambda}^{\varepsilon}\|_2^2\right]>0.
 \end{eqnarray}
 From \eqref{Bb1} and \eqref{DD33}, one has
 \begin{eqnarray}\label{DD34}
   c_{\lambda}^{\varepsilon} =  \mathcal{I}_{\lambda}^{\varepsilon}(u_{\lambda}^{\varepsilon})+\sum_{k=1}^{l}\mathcal{I}_{\lambda}^{\infty}(w^k)
    >  lm_{\lambda}^{\infty}.
 \end{eqnarray}
 By Lemma \ref{lem 5.2}, we have $c_{\lambda}^{\varepsilon}<m_{\lambda}^{\infty}$ for $\lambda\in (\tilde{\lambda}, 1]$, which, together with
 \eqref{DD34}, implies that $l=0$ and $\mathcal{I}_{\lambda}^{\varepsilon}(u_{\lambda}^{\varepsilon}) = c_{\lambda}^{\varepsilon}$.
 \end{proof}

 \begin{lemma}\label{lem 5.4}
 Assume that {\rm(V1), (V5), (V6)} and {\rm(F1)-(F3)} hold. Then for any $\varepsilon\in (0,\varepsilon_0]$, there exists
 $\bar{u}^{\varepsilon}\in H^1(\R^N)\setminus \{0\}$ such that $(\mathcal{I}^{\varepsilon})'(\bar{u}^{\varepsilon})=0$ and $\mathcal{I}^{\varepsilon}(\bar{u}^{\varepsilon})>0$.
 \end{lemma}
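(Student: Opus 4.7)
The plan is to mimic the strategy used in Lemma \ref{lem 4.9}, passing from the perturbed problem at parameter $\lambda \in (\tilde{\lambda},1]$ (where Lemma \ref{lem 5.3} gives critical points) to the target case $\lambda=1$. Fix $\varepsilon\in(0,\varepsilon_0]$. By Lemma \ref{lem 5.3} and Lemma \ref{lem 5.1}(iii), I can choose sequences $\lambda_n\to 1^{-}$ with $\lambda_n\in(\tilde{\lambda},1]$ and $u_n:=u_{\lambda_n}^{\varepsilon}\in H^1(\R^N)\setminus\{0\}$ satisfying
\begin{equation*}
   (\mathcal{I}_{\lambda_n}^{\varepsilon})'(u_n)=0, \qquad \mathcal{I}_{\lambda_n}^{\varepsilon}(u_n)=c_{\lambda_n}^{\varepsilon}\to c_{*}^{\varepsilon},
\end{equation*}
with $c_*^{\varepsilon}$ finite. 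The Pohozaev identity $\mathcal{P}_{\lambda_n}^{\varepsilon}(u_n)=0$ from Lemma \ref{lem 4.2} combined with assumption (V6), exactly as in the derivation of \eqref{DD33}, gives
\begin{equation*}
   c_{\lambda_n}^{\varepsilon}=\mathcal{I}_{\lambda_n}^{\varepsilon}(u_n)-\frac{1}{N+\alpha}\mathcal{P}_{\lambda_n}^{\varepsilon}(u_n)
      \ge \frac{1}{2(N+\alpha)}\left[(2+\alpha)\|\nabla u_n\|_2^2+(1-\theta'')\alpha V(0)\|u_n\|_2^2\right],
\end{equation*}
so the boundedness of $\{c_{\lambda_n}^{\varepsilon}\}$ yields boundedness of $\{u_n\}$ in $H^1(\R^N)$.

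Next I would upgrade $\{u_n\}$ to a bounded Palais-Smale sequence for the target functional $\mathcal{I}^{\varepsilon}=\mathcal{I}_1^{\varepsilon}$. Since
\begin{equation*}
   \mathcal{I}^{\varepsilon}(u_n)-\mathcal{I}_{\lambda_n}^{\varepsilon}(u_n)=\frac{\lambda_n-1}{2}\int_{\R^N}(I_{\alpha}*F(u_n))F(u_n)\mathrm{d}x,
\end{equation*}
and the analogous expression holds for the derivatives tested against any $\varphi\in H^1(\R^N)$, the boundedness of $\{u_n\}$, the estimate \eqref{Ru}, and $\lambda_n\to 1$ together imply
\begin{equation*}
   \mathcal{I}^{\varepsilon}(u_n)\to c_{*}^{\varepsilon}, \qquad (\mathcal{I}^{\varepsilon})'(u_n)\to 0 \text{ in } H^{-1}(\R^N).
\end{equation*}
Moreover, an argument parallel to Lemma \ref{lem 4.4}(v) (continuity of mountain pass levels in the parameter) shows $c_{*}^{\varepsilon}\le c_1^{\varepsilon}<m_1^{\infty}$, where the strict inequality comes from Lemma \ref{lem 5.2}.

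Finally, I apply the Jeanjean–Tanaka profile decomposition (Lemma \ref{lem 4.6}) to the bounded (PS)-sequence $\{u_n\}$ for $\mathcal{I}^{\varepsilon}$: there exist $\bar{u}^{\varepsilon}\in H^1(\R^N)$ with $(\mathcal{I}^{\varepsilon})'(\bar{u}^{\varepsilon})=0$, an integer $l\ge 0$, and nontrivial critical points $w^1,\dots,w^l$ of $\mathcal{I}_1^{\infty}=\mathcal{I}^{\infty}$ satisfying $\mathcal{I}^{\infty}(w^k)\ge m_1^{\infty}$ and
\begin{equation*}
   c_{*}^{\varepsilon}=\mathcal{I}^{\varepsilon}(\bar{u}^{\varepsilon})+\sum_{k=1}^{l}\mathcal{I}^{\infty}(w^k).
\end{equation*}
The main obstacle is ruling out $\bar{u}^{\varepsilon}=0$. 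If this occurred and $l\ge 1$, the previous identity together with the positivity of $\mathcal{I}^{\varepsilon}(\bar{u}^{\varepsilon})$ given by the Pohozaev-based bound \eqref{DD33} at $\lambda=1$ would force $c_{*}^{\varepsilon}\ge m_1^{\infty}$, contradicting $c_{*}^{\varepsilon}<m_1^{\infty}$. If $\bar{u}^{\varepsilon}=0$ and $l=0$, then $u_n\to 0$ strongly, contradicting $c_{*}^{\varepsilon}>0$ (which follows from Lemma \ref{lem 5.1}(ii)). Hence $\bar{u}^{\varepsilon}\ne 0$, and the same Pohozaev argument gives $\mathcal{I}^{\varepsilon}(\bar{u}^{\varepsilon})>0$, completing the proof.
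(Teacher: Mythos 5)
Your proof is correct and follows essentially the same route as the paper: you pass to $\lambda_n\to 1$, use the Poho\v{z}aev identity together with (V6) (as in \eqref{DD33}) to get boundedness, upgrade to a bounded (PS)-sequence for $\mathcal{I}^{\varepsilon}$, control the level via a Lemma \ref{lem 4.4}(v)-type continuity so that $c_*^{\varepsilon}\le c_1^{\varepsilon}<m_1^{\infty}$, and then run the Jeanjean--Tanaka splitting exactly as in Lemma \ref{lem 5.3}. One small wording glitch: in your case ``$\bar u^{\varepsilon}=0$ and $l\ge 1$'' you appeal to the ``positivity of $\mathcal{I}^{\varepsilon}(\bar u^{\varepsilon})$'' from \eqref{DD33}, but if $\bar u^{\varepsilon}=0$ then $\mathcal{I}^{\varepsilon}(\bar u^{\varepsilon})=0$ and the contradiction comes simply from $c_*^{\varepsilon}=\sum_k \mathcal{I}^{\infty}(w^k)\ge m_1^{\infty}$; this is a phrasing issue only and does not affect the argument.
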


 \begin{proof} In view of Lemma \ref{lem 5.3}, for any fixed $\varepsilon\in (0, \varepsilon_0]$, there exist two sequences
 $\{\lambda_n\}\subset [\tilde{\lambda}, 1]$ and $\{u_{\lambda_n}^{\varepsilon}\}\subset H^1(\R^N)\setminus \{0\}$, denoted by
 $\{u_n^{\varepsilon}\}$, such that
 \begin{equation}\label{R00}
   \lambda_n\rightarrow 1, \ \ \ \ c_{\lambda_n}^{\varepsilon}\rightarrow c_*^{\varepsilon},
   \ \ \ \ (\mathcal{I}_{\lambda_n}^{\varepsilon})'(u_n^{\varepsilon})=0,
    \ \ \ \ 0<\mathcal{I}_{\lambda_n}^{\varepsilon}(u_n^{\varepsilon}) = c_{\lambda_n}^{\varepsilon}.
 \end{equation}
 Then it follows from \eqref{R00} and Lemma \ref{lem 4.2} that $\mathcal{P}_{\lambda_n}^{\varepsilon}(u_n^{\varepsilon})=0$.
 From  (V6), \eqref{Ilv}, \eqref{Plv}, \eqref{R00} and Lemma \ref{lem 5.1} (iii), one has
 \begin{eqnarray}\label{QQ01}
   C_6
    & \ge & c_{\lambda_n}^{\varepsilon}=\mathcal{I}_{\lambda_n}^{\varepsilon}(u_n^{\varepsilon})
              -\frac{1}{N+\alpha}\mathcal{P}_{\lambda_n}^{\varepsilon}(u_n^{\varepsilon})\nonumber\\
    &  =  & \frac{2+\alpha}{2(N+\alpha)}\|\nabla u_n^{\varepsilon}\|_2^2
              +\frac{1}{2(N+\alpha)}\int_{\R^N}\left[\alpha V_{\varepsilon}(x)
              -\nabla V_{\varepsilon}(x)\cdot x\right](u_n^{\varepsilon})^2\mathrm{d}x\nonumber\\
    & \ge & \frac{1}{2(N+\alpha)}\left[(2+\alpha)\|\nabla u_{\lambda}^{\varepsilon}\|_2^2+(1-\theta'')\alpha V(0)
              \|u_{\lambda}^{\varepsilon}\|_2^2\right].
 \end{eqnarray}
 This shows that $\{\|u_n^{\varepsilon}\|\}$  is bounded in $H^1(\R^N)$.
 In view of \eqref{R00}, we have $\lim_{n\to\infty}c_{\lambda_n}^{\varepsilon}=c_*^{\varepsilon}$. Hence,
 it follows from \eqref{Iv} and \eqref{R00} that
 \begin{equation*}
   \mathcal{I}^{\varepsilon}(u_n^{\varepsilon})\rightarrow  c_*^{\varepsilon}, \ \ \ \  (\mathcal{I}^{\varepsilon})'(u_n^{\varepsilon})\rightarrow 0.
 \end{equation*}
 This shows that $\{u_n^{\varepsilon}\}$ satisfies \eqref{PSv} with $c_{\lambda}^{\varepsilon}=c_*^{\varepsilon}$. In view of the proof
 of Lemma \ref{lem 5.3}, we can show that there exists $\bar{u}^{\varepsilon}\in H^1(\R^N)\setminus \{0\}$ such that $(\mathcal{I}^{\varepsilon})'(\bar{u}^{\varepsilon})=0$ and $\mathcal{I}^{\varepsilon}(\bar{u}^{\varepsilon})>0$.
 \end{proof}

 \begin{proof}[Proof of Theorem  \ref{thm1.9}] By a similar argument as the proof of Theorem  \ref{thm1.6}, we can prove
 Theorem  \ref{thm1.9} by using Lemmas \ref{lem 5.2}, \ref{lem 5.3} and \ref{lem 5.4} instead of \ref{lem 4.5}, \ref{lem 4.8}
 and \ref{lem 4.9}, respectively, so, we omit it.
 \end{proof}

 \section*{Acknowledgements}
This work was partially supported by the National Natural Science Foundation of China (11571370).

\vspace{0.1in}

\end{document}